\documentclass[12pt]{amsart}
   \setlength{\textheight}{23cm}
\setlength{\textwidth}{16cm}
\setlength{\oddsidemargin}{0cm}
\setlength{\evensidemargin}{0cm}
 \usepackage{amsmath}
\usepackage{amssymb}
\usepackage{amsfonts}
     \usepackage{xcolor}
    \makeatletter
     \def\section{\@startsection{section}{1}%
     \z@{.7\linespacing\@plus\linespacing}{.5\linespacing}%
     {\bfseries \normalfont\scshape
     \centering
     }}
     \def\@secnumfont{\bfseries}
     \makeatother
\scrollmode
   \newtheorem{theorem}{Theorem}[section]
\newtheorem{lemma}[theorem]{Lemma}
\newtheorem{corollary}[theorem]{Corollary}
\newtheorem{proposition}[theorem]{Proposition}

\theoremstyle{definition}

\newtheorem{problem}[theorem]{Problem}
\newtheorem{example}[theorem]{Example}

\newtheorem{remark}[theorem]{Remark}

\newtheorem{comments}[theorem]{Comments}
\numberwithin{equation}{section}


\def \a{{\alpha}}
\def \b{{\beta}}

\def \d{{\delta}}
\def \e{{\varepsilon}}

\def \g{{\gamma}}

\def \k{{\kappa}}

\def \o{{\omega}}

\def \t{{\vartheta}}
\def \m{{\mu}}
\def \s{{\sigma}}


\def \qq{{\qquad}}
 
\def \noi{{\noindent}}
\def \dd{{\rm d}}
 \def\cc{{\color{red}{\rm [$\v c$]}\ }}


\def\E{{\mathbb E \,}}

\def\P{{\mathbb P}}

\def\R{{\mathbb R}}

\def\Z{{\mathbb Z}}

\def\N{{\mathbb N}}


 at 9,5pt

 \font\sevenrm= cmr10 at 7 pt
   
  \font\gsec= cmb10 at 11,5 pt  
 
\def\beq{\begin{equation}}
\def\eeq{\end{equation}}
  
\def\ben{\begin{eqnarray}}
\def\een{\end{eqnarray}}

  
\def\ddate {\sevenrm \ifcase\month\or January\or
February\or March\or April\or May\or June\or July\or
August\or September\or October\or November\or December\fi\! {\the\day}, \!{\sevenrm\the\year}}

  \title[\rm 
 ASLLT for sums of independent random variables]
  {Almost sure local limit theorems 
   for sums of independent random variables using
the Bernoulli  decomposition
    of a random variable
}
 \title[\rm 
   ASLLT for sums of independent random variables]  {An application of the Bernoulli  decomposition
    of a random variable  to the Almost sure local limit theorem  in the independent case} 
      \author{Michel J.\,G. WEBER}

\title[\rm 
   A general correlation inequality and application to   ASLLT]  {A general correlation inequality  for level sets 
    of     sums of independent  random variables using 
 the Bernoulli part with  applications to  the almost sure local limit theorem}  
 
\address{IRMA, UMR 7501, Universit\'e
Louis-Pasteur et C.N.R.S.,   7  rue Ren\'e Descartes, 67084
Strasbourg Cedex, France.
   E-mail:    {\tt  michel.weber@math.unistra.fr}}

\begin{document}
 

\renewcommand{\thefootnote}{} {{
\footnote{2010 \emph{Mathematics Subject Classification}: Primary: 60F15, 60G50 ;
Secondary: 60F05.}
\footnote{\emph{Key words and phrases}: Local limit theorem, almost sure version,  independent random variables,     
 lattice distributed random variables,  Bernoulli-part of a random variable, square integrability, correlation inequality, uniform asymptotic distribution, quasi-orthogonal system, i.i.d. random variables,  Cram\'er model, a.s. convergent series.}
 \renewcommand{\thefootnote}{\arabic{footnote}}
\setcounter{footnote}{0}
  \begin{abstract} Let $X=\{X_j , j\ge 1\}$ be  a sequence of  independent, square integrable  variables taking values in a common lattice $\mathcal L(v_{
0},D )= \{v_{ k}=v_{ 0}+D k ,  k\in \Z\}$. 
 Let 
$S_n=X_1+\ldots +X_n$,  $S_0=0$,    $a_n= {\mathbb E\,} S_n$, and $\s_n^2={\rm Var}(S_n)\to \infty$ with $n$.  Assume that for each $j$, $\t_{X_j}  =\sum_{k\in \Z}{\mathbb P}\{X_j=v_k\}\wedge{\mathbb P}\{X_j=v_{k+1}\}>0$. 
 
\vskip 2 pt Using the Bernoulli part, we prove a general sharp correlation inequality extending the one we obtained in  the i.i.d.\,case in \cite{W3}:  Let $0<\t_j\le \t_{X_j}$  and assume that $
\nu_n =\sum_{j=1}^n \t_j \, \uparrow \infty$, $n\to \infty$.
  Let  $\k_j\in \mathcal L(jv_0,D)$, $j=1,2,\ldots$   be a  sequence of integers   such that 
 \begin{equation*} {\rm(1)}\ \ \frac{\kappa_j-a_j}{\s_j}=\mathcal O(1 )  \qquad
   {\rm(2)}\ \ {\it for \, all}\  j\ge i\ge0,\  (\s_j^2-\s_i^2)^{1/2}  \,{\mathbb P}\{S_j-S_i=\kappa_j-\kappa_i\}  ={\mathcal O}(1). \end{equation*} 
    Then there  exists a constant  
$C $  
  such that for all $1\le m<n$,
\begin{align*} 
 \s_n&\s_m \, \Big|{\mathbb P}\{S_n=\k_n, S_m=\k_m\}- {\mathbb P}\{S_n=\k_n \}{\mathbb P}\{  S_m=\k_m\} \Big|
 \cr &      \,\le \,    \frac{C}{D^2}\, \max \Big(\frac{\s_n }{\sqrt{\nu_n}},\frac{\s_m }{\sqrt {\nu_m}} \Big)^3  \,\bigg\{  \nu_n^{1/2} \prod_{j=m+1}^n\vartheta_j    
+ {\nu_n^{1/2}  \over
 (\nu_n-\nu_m) ^{3/2}}+{ 1\over    \sqrt{{\nu_n\over \nu_m}}-1} \bigg\}.
 \end{align*}

\vskip 2 pt 
  We derive  a nearly optimal  almost sure local limit theorem: Assume   
that $M(t)=  \sum_{1\le n<t} {1\over \s_n\sqrt{\nu_n} }\uparrow \infty$ with $t$. Given any $R>1$, let $\mathcal M=\{M_j=M(R^{j}), j\ge 1\}$, $I_l=[R^l,R^{l+1}[$, $l\ge 1$.
 
  We prove under moderate and simple conditions, that the series 
  $$
 \sum_{k\ge 1\atop 
I_k\cap \mathcal M\neq\emptyset} 
  \frac{1}{R^k}\sup_{j\ge 1\atop   M_j  
\in I_k} \Big|\sum_{1\le n<R^{j}}{   \vartheta_n(  {\bf 1}_{\{S_n=\kappa_n\}}-{\mathbb P}\{S_n=\kappa_n\} )
  \over 
 \sqrt{\nu_n}}\Big| 
        $$
  converges almost  surely. Further 
 if $  \lim_{n\to \infty}  \s_n {\mathbb P}\{S_n=\k_n\}=
 {D\over  \sqrt{ 2\pi} }e^{-{ \k ^2\over  2   } }
 $, $M(t)$ is slowly varying near infinity,       then the    ASLLT holds, 
\begin{equation*} 
   \lim_{N\rightarrow\infty}  {1\over {\sum_{1\le n<N}{\t_m\over \s_n\sqrt{\nu_n} }}}\ \sum_{1\le n<N}{   {\bf 1}_{\{S_n=\kappa_n\}}  \over   \sqrt{\nu_n}} \, \buildrel{\rm a.s.}\over ={D\over  \sqrt{ 2\pi} }e^{-{ \k ^2\over  2   } }
   .\end{equation*}
Applications are given, notably to the Cram\'er model.
   \end{abstract}
 \maketitle


\section{\gsec INTRODUCTION}\label{s1}
\vskip 10 pt  
Throughout  let $X=\{X_i , i\ge 1\}$ denotes a sequence of  independent  variables taking values in a common lattice $\mathcal L(v_{
0},D )$, namely defined by the
 sequence $v_{ k}=v_{ 0}+D k$, $k\in \Z$, where
 $v_{0} $ and $D >0$ are   real numbers, $D$ is called the  {\it span} of the lattice. 
   Let 
$S_n=X_1+\ldots +X_n$, $n\ge 1$. Then  $S_n $   takes values in the lattice
$\mathcal L( v_{ 0}n,D )$. 
  We assume that the random variables $X_i$ are square integrable, and that $\s_n^2={\rm Var}(S_n)\to \infty$ with $n$. Let also    $a_n= {\mathbb E\,} S_n$, for each $n$.   
In this work, we consider the problem of estimating the probability of the level sets 
$$ \P\{S_n=N\}$$
 $N\in\mathcal L( v_{ 0}n,D )$. 
This is obviously a quite important problem, which attracted a lot of attention.
The solution, when it  exists, is described by a famous limit theorem,
 the local limit theorem (LLT).  The sequence $  X$ satisfies a {\it local limit theorem} (in the usual sense) if
 \begin{equation}\label{llt}    \sup_{N=v_0n+Dk }\Big|\s_n {\mathbb P}\{S_n=N\}-{D\over  \sqrt{ 2\pi } }e^{-
{(N-a_n)^2\over  2 s_n^2} }\Big| = o(1), \qq \quad n\to\infty.
\end{equation}

   Remark that the series 
 \begin{equation}\label{def.llt.indep.sum}    \sum_{N=v_0n+Dk }  \Big( {\mathbb P}\{S_n=N\}-{D\over  \sqrt{ 2\pi } B_n}e^{-
{(N-M_n)^2\over  2 B_n^2} } \Big),
\end{equation}
is  obviously convergent, whereas  
nothing can be deduced concerning its   order  from the very definition of the local limit theorem.     However one can draw from Poisson summation formula that  \begin{equation}\label{def.llt.indep.sum.}    \sum_{N=v_0n+Dk } 
 \Big(  {\mathbb P}\{S_n=N\}-{D\over  \sqrt{ 2\pi } B_n}e^{-
{(N-M_n)^2\over  2 B_n^2} }\Big)\,=\, \mathcal O(D/B_n),
\end{equation}
  see \cite{WM1},  Remark 1.1.  \vskip 1 pt
In the  iid  case the  probability 
 $\P\{S_n=N\}$ 
can be efficiently estimated by using Gnedenko's local limit theorem \cite{G}, which asserts that  \begin{equation}\label{llt.iid}   
 \sup_{
 N\in  \mathcal L( v_{ 0}n,D )}\Big|  \s \sqrt{n}\, {\mathbb P}\{S_n=N\}-{D\over  \sqrt{ 2\pi } }e^{-
{(N-n\m)^2\over  2 n\s^2} }\Big| = o(1),
\end{equation} 
if and only if the span $D$ is maximal,
(there are no  other real numbers
$v'_{0}
$ and
$D' >D$ for which
${\mathbb P}\{X
\in\mathcal L(v'_0,D')\}=1$).
 See  \cite{P} (Theorem 1 and proof of Theorem
2,  p.\ 193--195).
\vskip 3 pt
 The study of this  fine  limit theorem  is intrinsically more complicated than the one of the central limit theorem;   conditions on arithmetical properties of the support of a random variable are for instance always present.   
      It was investigated by numerous authors, we only list some recent, and refer to  our  monograph  \cite{SW}   jointly written with Szewczak ($\approx$ 240 references). 
  We cite    
  Gamkrelidze \cite{G2a,Gam80,Gam3,Gam4} (convergence in variation, counterexamples), Mukhin \cite{Mu2, Mu1,Mu} (necessary and sufficient condition, with a recent correction in  Weber \cite{WM2}), Doney \cite{D1,D,D2} (local large deviations), 
        Breuillard \cite{Br} (diophantine measures and Edgeworth expansions),  
   MacDonald \cite{M,M1},
 Dabrowski and McDonald \cite{DMD} (Bernoulli part extraction), Giuliano and Weber \cite{GW2,GW3} (effective rate), Macht and Wolf \cite{MW} (using   H\"older-Continuity), R\"ollin  and Ross   \cite{R} (using Landau-Kolmogorov inequalities),      Jacod, Kowalski and Nikeghbali \cite{JKN} (using  Mod-$ \phi$ convergence),  Dolgopyat \cite{Dolgo}, Dolgopyat and  Hafouta \cite{DH}, Hafouta and   Kifer   \cite{HK16,HK18} (under  tightness assumptions), and 
 Rousseau-Egele \cite{Rou}, Broise \cite{Bro}, Calderoni, Campanino and Capocaccia \cite{CCC},   Gou\"ezel \cite{Gou} (LLT for ergodic sums of expanding maps).

    
 \medskip \par


 \vskip 10 pt

 The   study
 of almost sure  versions of the local limit theorem is more recent, and is one     motivation  of this paper.   This notion  was introduced in 2002 by Denker and Koch in
\cite{DK} (sections 1,2), in analogy with the usual almost sure central limit
theorem:
 \lq\lq {\it A stationary sequence of random variables $\{X_n, n\ge 1\}$ taking values in $\R$ or $\Z$ with partial sums $S_n =X_1= \ldots +X_n$ satisfies an almost
sure local limit theorem, in short ASLLT, if there exist sequences
$\{a_n, n\ge 1\}$ in
$\R$ and
$\{b_n, n\ge 1\}$ in $\R^+$ satisfying $b_n\to \infty$, such that 
\begin{equation}\label{asllt}\lim_{N\to \infty}{1\over \log N}\sum_{n=1}^N {b_n\over n} \chi\{ S_n\in k_n+I\}\buildrel{a.s.}\over{ =}g(\kappa)|I| \quad {\rm
as}\quad {k_n-a_n\over b_n}\to
\kappa, 
\end{equation}  where $g$ denotes some density and $I\subset \R$ is some bounded interval.  Further $|I|$ denotes the length of the interval $I$
in the case where $X_1$ is real valued and the counting measure of $I$ otherwise.}\rq\rq 

\vskip 3 pt 
The above  definition is  however   incomplete, 
   as 
    remarked in Weber   \cite{W3}, Section 4. Assume
that 
  $\P\{X_i\in \mathcal L(v_0,D)\}=1$, for each $i$.
As $g$ is a density, there are reals $\kappa$ such that
$g(\kappa)\not= 0$.  Now if
$\{ k_n, n\ge 1\}$ is such that  ${k_n-a_n\over
b_n}\to
\kappa$, then   any  sequence $\{\k_n, n\ge 1\}$, $\k_n=k_n +u_n$ where $u_n$ are uniformly bounded  also satisfies this. But we can
arrange the $u_n$ so that $\k_n\notin \mathcal L( v_0,D)$ for all $n$. 
Picking $I=[-\d, \d]$ with $ \d<1/2$, we get   
$$ \lim_{N\to \infty}{1\over \log N}\sum_{n=1}^N {b_n\over n} \chi\{ S_n\in k_n+I\}\buildrel{{\rm a.s.}}\over{ =}0\not=g(\kappa)|I| ,
$$ 
hence a contradiction. It is  thus necessary  to    assume 
\begin{equation}\label{addasllt} \k_n \in \mathcal L(nv_0,D), \qq n= 1,2, \ldots,
\end{equation}
  also  to  change $|I|$ for $\#\{ I\cap
\mathcal L( v_0,D)\}$. Then (\ref{asllt})  modified   becomes,
\begin{equation}\label{asllt1}\lim_{N\to \infty}{1\over \log N}\sum_{n=1}^N {b_n\over n} \chi\{ S_n\in k_n+I\}\buildrel{{\rm a.s.}}\over{
=}g(\kappa)\#\{ I\cap \mathcal  L(  v_0,D)\}, \  {\rm as}\  {k_n-a_n\over b_n}\to
\kappa,  
\end{equation}
where $I$ is a bounded interval.  
\vskip 3 pt 
Translating it in the independent case, we consider ($b_n=\s_n$, $g(x)= {D\over  \sqrt{ 2\pi } }e^{-
{x^2/  2 } }$), this means that 
the ASLLT holds, by definition, if  \begin{equation}\label{asllt2}\lim_{N\to \infty}{1\over \log N}\sum_{n=1}^N {\s_n\over n} \chi\{ S_n= k_n \}\buildrel{{\rm a.s.}}\over{
=}{D\over  \sqrt{ 2\pi } }e^{-
{x^2/  2 } } , \qq {\rm  whenever}  \quad  {k_n-a_n\over \s_n}\to
\kappa.  
\end{equation}
However, as we shall see in Section \ref{s4}, (Theorem \ref{t1[asllt].}) that formulation is not appropriate, and the right one we prove turns up to be  more complicated,   involving notably the sequence of parameters $\{\t_n,n\ge 1\}$. One must nevertheless admit that the right formulation is difficult to guess, the i.i.d. case being generally weakly informative of the independent non identically distributed case. As mentioned by the authors  in \cite{DK}, p.146, the existence of almost sure local limit theorems is of fundamental
interest.  
It seems reasonable to expect     applications, notably at the interface with Number Theory.
\vskip 3 pt\vskip 3 pt
The inherent second order study,
 which has its own interest,  is much more difficult than for establishing the almost sure central limit theorem.
 The   almost sure local limit theorems are very recent and  already cover  the i.i.d. case, the stable case,
 Markov chains,   the model of the Dickman function.
 When the random variables are identically distributed, the ASLLT states as follows.
\begin{theorem} \label{t1[asllt]..} Let $X$ be a square integrable  random variable taking values on the lattice $\mathcal L(v_0,D)= \{v_0+kD, k\in \mathbb Z\}$  with maximal span $D$. Let $\m ={\mathbb E\,} X$,
$\s^2={\rm Var} (X)>0$.  Let also $ \{X_k, k\ge 1\}$ be independent copies of
$X$, and put
$S_n=X_1+\ldots +X_n$, $n\ge 1$.   Then
$$ \lim_{ N\to \infty}{1\over    \log N } \sum_{ n\le
N}  {  1 \over \sqrt n} {\bf 1}_{\{S_n=\kappa_n\}} \buildrel{a.s.}\over {=}{D\over
\sqrt{ 2\pi}\s}e^{-  {\k^2/ ( 2\s^2 ) } },$$
  for any  sequence of integers $\k_n\in \mathcal L(nv_0,D)$, $n=1,2,\ldots$      such that
  \begin{equation}\label{eq2}  \lim_{n\to \infty} { \k_n-n\m   \over    \sqrt{  n}  }= \k >0.
\end{equation}
 \end{theorem}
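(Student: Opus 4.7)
The approach is the classical two-step ASLLT strategy: identify the limit in mean via the ordinary LLT, then establish almost sure convergence via a variance/covariance estimate for the random variables $Z_n={\bf 1}_{\{S_n=\kappa_n\}}/\sqrt n$. The main weight is carried by the correlation inequality; Gnedenko's theorem suffices for the expectation step.

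\medskip
Set $T_N=\sum_{n\le N} Z_n$. Since $D$ is maximal and $(\kappa_n-n\mu)/\sqrt n\to\kappa$, Gnedenko's theorem (Theorem~\ref{gnedenko}) gives
$$\sigma\sqrt n\;{\mathbb P}\{S_n=\kappa_n\}\;\longrightarrow\; g(\kappa):=\frac{D}{\sqrt{2\pi}}\,e^{-\kappa^2/(2\sigma^2)},$$
whence ${\mathbb E}\,Z_n\sim g(\kappa)/(\sigma n)$ and $(\log N)^{-1}{\mathbb E}\,T_N\to g(\kappa)/\sigma$. It remains to prove that $T_N-{\mathbb E}\,T_N=o(\log N)$ almost surely.

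\medskip
The diagonal contribution to $\mathrm{Var}(T_N)$ is $\sum_{n\le N}\mathrm{Var}(Z_n)\le\sum_{n}{\mathbb P}\{S_n=\kappa_n\}/n=O(\sum_n n^{-3/2})=O(1)$ by Gnedenko. For the off-diagonal covariances, I would invoke the i.i.d.\ sharp correlation inequality of \cite{W3}, the predecessor of Theorem~\ref{t2[asllt]} of the present paper; specialized at $\nu_n=n\vartheta$, $\sigma_n=\sigma\sqrt n$, the prefactor $\max(\sigma_n/\sqrt{\nu_n},\sigma_m/\sqrt{\nu_m})^3$ becomes a constant and the first (product-of-$\vartheta_j$) term is exponentially small in $n-m$, leaving
$$\sigma^2\sqrt{mn}\;\bigl|\mathrm{Cov}\bigl({\bf 1}_{\{S_n=\kappa_n\}},{\bf 1}_{\{S_m=\kappa_m\}}\bigr)\bigr|\;\le\; \frac{C\sqrt n}{(n-m)^{3/2}}+\frac{C}{\sqrt{n/m}-1}+\text{(exp.\ small)}.$$
A two-parameter summation (splitting at $m=n/2$ and using $\sqrt n-\sqrt m=(n-m)/(\sqrt n+\sqrt m)$ for the second piece) then yields the dyadic bound $\mathrm{Var}(T_N-T_M)=O\bigl(\log(N/M)\cdot\log N\bigr)$ for $M<N$, and in particular $\mathrm{Var}(T_N)=O((\log N)^2)$.

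\medskip
With the dyadic variance bound in hand I would conclude by a G\'al--Koksma type maximal inequality applied to the quasi-orthogonal system $\{Z_n-{\mathbb E}\,Z_n\}$: along an exponential subsequence $N_k=\lfloor R^k\rfloor$ with $R>1$, Chebyshev's inequality combined with the maximal inequality gives Borel--Cantelli-summable control of $\sup_{N_k\le N<N_{k+1}}|T_N-{\mathbb E}\,T_N|/\log N_k$, and monotonicity of $T_N$ with $\log N_{k+1}/\log N_k\to1$ as $R\downarrow 1$ handles interpolation between consecutive $N_k$. The main obstacle is precisely the off-diagonal variance bound in the previous paragraph: the events $\{S_n=\kappa_n\}$ carry probability only $O(1/\sqrt n)$, so any elementary covariance estimate is far too crude, and it is the sharp decay $1/(\sqrt{n/m}-1)$ delivered by the Bernoulli-part correlation inequality of \cite{W3} that brings $\mathrm{Var}(T_N)$ down to the G\'al--Koksma-summable regime and makes the a.s.\ passage possible.
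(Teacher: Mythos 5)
The overall strategy you outline (correlation inequality $\to$ variance bound $\to$ maximal inequality and Borel--Cantelli along geometric subsequences $\to$ interpolation) tracks the paper's proof, but your variance bound has a genuine gap that the maximal inequality cannot repair.

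You obtain $\mathrm{Var}(T_N)=O((\log N)^2)$, which is one power of $\log$ too weak. After reindexing by dyadic blocks, $\mathrm{Var}(T_N-T_M)\le C\,\log N\,\log(N/M)$ translates into an increment bound $\E\bigl|\sum_{I<i\le J}\xi_i\bigr|^2\le CJ(J-I)\le C(J^2-I^2)$, so G\'al--Koksma (Lemma~\ref{GK}) can only be applied with $\beta=2$ and yields $T_N-\E T_N=O\bigl(\log N\,(\log\log N)^{2+\delta}\bigr)$ a.s., which is not $o(\log N)$; Chebyshev plus Borel--Cantelli along $N_k=R^k$ is no better, since $\mathrm{Var}(T_{N_k})/(\log N_k)^2$ does not even tend to $0$. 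The bound that makes the proof go through is $\mathrm{Var}(T_N)=O(\log N)$, from which G\'al--Koksma with $\beta=1$ yields $T_N-\E T_N=O\bigl((\log N)^{1/2}(\log\log N)^{2+\delta}\bigr)=o(\log N)$.

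The spurious $\log$ comes precisely from applying the $1/(\sqrt{n/m}-1)$ term of Corollary~\ref{c2[cor.est.i.i.d]} to nearby indices: for $n/2\le m<n$ that term behaves like $n/(n-m)$, and $\sum_{n/2\le m<n}\frac{1}{mn}\cdot\frac{n}{n-m}\sim(\log n)/n$, which sums to $(\log N)^2$. In this regime the Bernoulli-part inequality is actually \emph{weaker} than the elementary bound of Remark~\ref{simple.correlation.bound},
$$\sigma_m\sigma_n\,\bigl|\mathrm{Cov}\bigr|\le C\Bigl(\frac{\sigma_n}{\sigma_{n-m}}+1\Bigr)\approx C\sqrt{\tfrac{n}{n-m}},$$
for which the same inner sum is $O(1/n)$ and the total is $O(\log N)$. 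So the intuition in your final paragraph is reversed: the Bernoulli-part estimate is the indispensable tool when $m\le n/2$ (where its $\sqrt{m/n}$ decay produces geometric off-diagonal decay), while for $m\ge n/2$ one must fall back on the elementary concentration-type bound.

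The paper organizes exactly this dichotomy via dyadic blocking: with $Z_i=\sum_{2^{i-1}\le m<2^i}Y_m/m$, it uses the elementary bound within blocks to get $\E Z_i^2\le C$, and the Bernoulli-part bound across blocks to get $|\E Z_iZ_j|\le C\,2^{-|i-j|/2}$ for $|i-j|>1$; the block sums $\{Z_i\}$ are then a quasi-orthogonal system (Theorem~\ref{asllt.quasi.ortho}, Corollary~\ref{iid.qos}). Note that the individual $Z_n-\E Z_n$ in your argument cannot be quasi-orthogonal, since that would force $\mathrm{Var}(T_N)=O(1)$. Rademacher--Menshov plus Kronecker's lemma, followed by Ces\`aro interpolation over $R$-geometric scales, then close the proof.
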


Note that by    \eqref{llt.iid},  if
$ \k_n \in \mathcal L(nv_0,D)$ is a sequence which   verifies condition (\ref{eq2}), then
 \begin{equation}\label{llt1}  \lim_{n\to \infty}  \sqrt n {\mathbb P}\{S_n=\k_n\}={D\over  \sqrt{ 2\pi}\s}e^{-
{ \k ^2\over  2   \s^2} } .
\end{equation}

Theorem \ref{t1[asllt]..} was announced  in Denker and Koch \cite{DK} (Corollary 2). The succint  proof given however contains a gap, see   Weber   \cite{W3}. A complete proof was given in  \cite{W3}.  Later, in the well-written paper \cite{GS}  adressing the  same problem in the stable i.i.d. case,  more precisely, for  specific classes of  stable i.i.d. random variables,  Giuliano and Szewczak,  recovered  that result   as a particular case.   One interesting aspect of the  approach used in  \cite{GS} is that it is  based on Fourier analysis, and is  thus   different from ours. See also Giuliano and Szewczak \cite{GS13}, for a result of this kind concerning Markov chains. See Section \ref{sub2..1}  for  a detailed exposition and a new improvment. 
\vskip 3 pt 
\vskip 5 pt An ASLLT with rate    was primarily  proved in Giuliano-Weber \cite{GW3}.  
 \begin{theorem}[\cite{GW3}, Theorem 1] \label{tb}Assume that  $\E X^{2+\e}<\infty$  for some positive $\e$. Then,   
$$ \lim_{ N\to \infty}{1\over    \log N } \sum_{ n\le
N}  {  1 \over \sqrt n} {\bf 1}_{\{S_n=\kappa_n\}} \buildrel{a.s.}\over {=}{D\over  \sqrt{ 2\pi}\s}e^{-
{ \k ^2\over  2   \s^2} },$$
  for any  sequence of integers $\{\k_n, n\ge 1\}$     such that \eqref{llt1}  holds. 
 Moreover, if \eqref{eq2} is sharpened 
as follows,
$$    { \k_n-n\m   \over    \sqrt{  n}  } = \k + \mathcal O_\eta\big(  (\log n)^{-1/2+\eta}        \big),
$$then 
$${1\over    \log N } \sum_{ n\le
N}  {  1 \over \sqrt n} {\bf 1}_{\{S_n=\kappa_n\}}  \buildrel{\rm a.s.}\over {=}{D\over  \sqrt{ 2\pi}\s}e^{-
{ \k ^2\over  2   \s^2} } +\mathcal O_\eta\big(  (\log N)^{-1/2+\eta}      
 \big) \Big).$$ 
\end{theorem}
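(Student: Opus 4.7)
The strategy is to write $\mathbf{1}_{\{S_n=\kappa_n\}} = p_n + U_n$ with $p_n := \P\{S_n=\kappa_n\}$ and $U_n$ the centered indicator, and split
$$\frac{1}{\log N}\sum_{n\le N}\frac{\mathbf{1}_{\{S_n=\kappa_n\}}}{\sqrt n} \,=\, \frac{1}{\log N}\sum_{n\le N}\frac{p_n}{\sqrt n} \,+\, \frac{T_N}{\log N}, \qquad T_N := \sum_{n\le N}\frac{U_n}{\sqrt n}.$$
The hypothesis \eqref{llt1} forces $\sqrt n\,p_n\to c:=(D/\sqrt{2\pi}\sigma)\,e^{-\kappa^2/(2\sigma^2)}$, and a Cesàro argument applied to $\sum_{n\le N} n^{-1}\cdot\sqrt n\,p_n$ converges to $c\log N$. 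The entire task therefore reduces to proving $T_N = o(\log N)$ almost surely.

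The key input is the i.i.d.\ specialization of the correlation inequality of Theorem \ref{t2[asllt]}. Here $\sigma_n = \sigma\sqrt n$, $\vartheta_j\equiv\vartheta_X>0$ (positive since $D$ is maximal, so two adjacent atoms of $\mathcal L(v_0,D)$ are charged, cf.\ the remark preceding Theorem \ref{tb}), $\nu_n = n\vartheta_X$, and $\sigma_n/\sqrt{\nu_n}$ is a constant; one obtains for $1\le m<n$
$$|\E(U_nU_m)| \,\le\, \frac{C}{\sqrt{nm}}\Bigl\{\sqrt n\,\vartheta_X^{\,n-m} + \frac{\sqrt n}{(n-m)^{3/2}} + \frac{1}{\sqrt{n/m}-1}\Bigr\}.$$
Coupled with the diagonal bound $\E U_n^2\le p_n = O(n^{-1/2})$, a careful dyadic summation in both $n-m$ and $n/m$ controls $\E\bigl(\sum_{a<n\le b} U_n/\sqrt n\bigr)^2$ in terms of $\log(b/a)$ up to logarithmic factors, and a Menshov--Rademacher type maximal inequality upgrades this to a bound on $\E\max_{a<b\le N}\bigl|\sum_{a<n\le b} U_n/\sqrt n\bigr|^2$. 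Choosing a geometric subsequence $N_k=\rho^k$, Borel--Cantelli yields $T_{N_k}/\log N_k \to 0$ a.s., while the maximal inequality controls the oscillations on $[N_k,N_{k+1}]$, giving $T_N/\log N \to 0$ a.s.\ and hence the first assertion.

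The main obstacle is the third, non-orthogonal term $1/(\sqrt{n/m}-1)\asymp\sqrt{nm}/(n-m)$ in the correlation inequality: it concentrates on the near-diagonal $n\approx m$, preventing a direct subsequence-plus-interpolation scheme and forcing the maximal-inequality route. For the quantitative refinement, the moment hypothesis $\E|X|^{2+\e}<\infty$ enters through an Edgeworth / Petrov rate $\sup_N|\sqrt n\,\P\{S_n=N\}-(D/\sqrt{2\pi}\sigma)\,e^{-(N-n\mu)^2/(2n\sigma^2)}|\ll n^{-\e/2}$ in Gnedenko's LLT. Combined with the sharpened assumption $(\kappa_n-n\mu)/\sqrt n = \kappa + \mathcal O_\eta((\log n)^{-1/2+\eta})$, this yields $\sqrt n\,p_n = c+\mathcal O_\eta((\log n)^{-1/2+\eta})$, and Abel summation produces the deterministic remainder $\mathcal O_\eta((\log N)^{-1/2+\eta})$. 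On the stochastic side, the same maximal lemma delivers $T_N = O(\sqrt{\log N\,\log\log N})$ a.s., which is already $o\bigl((\log N)^{1/2+\eta}\bigr)$ for every $\eta>0$, matching the announced rate.
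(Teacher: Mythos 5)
Your overall strategy --- splitting into a deterministic Ces\`aro part plus a centered stochastic remainder $T_N$, controlling $T_N$ via the correlation inequality and a maximal estimate, and feeding a Petrov-type LLT rate into the deterministic part --- is the right framework, and it is consistent with the machinery used throughout this paper. Note, though, that the paper only \emph{cites} Theorem~\ref{tb} from \cite{GW3} and does not reprove it here, so the comparison is with the paper's general method (the correlation inequality of Theorem~\ref{t2[asllt]}, the quasi-orthogonality of the dyadic blocks in Corollary~\ref{iid.qos}, and Rademacher--Menshov) rather than with a verbatim proof.

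There is, however, one genuine gap. You assert that $\vartheta_X>0$ is automatic once the span $D$ is maximal (``positive since $D$ is maximal, so two adjacent atoms of $\mathcal L(v_0,D)$ are charged''). This is false: take $\P\{X=0\}=\P\{X=3\}=\P\{X=5\}=\tfrac13$. The span is $\gcd(3,5,2)=1$, hence maximal, but no two \emph{consecutive} lattice points are both charged, so $\vartheta_X=0$, and the Bernoulli decomposition underlying Corollary~\ref{c2[cor.est.i.i.d]} cannot be applied to $X$ directly. The remark you cite concerns $S_n$, not $X$: it says that when the LLT holds along $\kappa_n$ and $\kappa_n+D$, then for all $n\ge n_\kappa$ one has $\P\{S_n=\kappa_n\}\wedge\P\{S_n=\kappa_n+1\}>0$. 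The correct fix, recorded in Remark~\ref{bbl}, is to regroup, replacing $X$ by the block variable $X'=S_{n_\kappa}$, which does satisfy $\vartheta_{X'}>0$; the correlation inequality then applies to sums of i.i.d.\ copies of $X'$, and one has to check that passing from $S_{jn_\kappa}$ back to $S_n$ for all $n$ is harmless. As written, your invocation of Corollary~\ref{c2[cor.est.i.i.d]} has no content in the case $\vartheta_X=0$, which is precisely the case that required work in \cite{W3}.

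Two lesser quantitative points. The second-moment bound $\E\bigl(\sum_{a<n\le b}U_n/\sqrt n\bigr)^2\lesssim\log(b/a)$ is in fact clean --- this is exactly the dyadic block estimate $\E\bigl|\sum_{I\le i\le J}Z_i\bigr|^2\le C_\tau(J-I)$ --- so your hedge ``up to logarithmic factors'' is unnecessary. On the other side, the claimed a.s.\ rate $T_N=O\bigl(\sqrt{\log N\,\log\log N}\bigr)$ overstates what Rademacher--Menshov or G\'al--Koksma deliver from a second-moment bound alone: those criteria give $T_N=o\bigl((\log N)^{1/2}(\log\log N)^{3/2+\delta}\bigr)$ and $O\bigl((\log N)^{1/2}(\log\log N)^{2+\delta}\bigr)$ respectively, not a $\sqrt{\log\log N}$ factor, which would require an iterated-logarithm argument for quasi-orthogonal sums that you do not supply. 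This does not change the conclusion since any of these bounds is $o\bigl((\log N)^{1/2+\eta}\bigr)$ for every $\eta>0$, but you should not claim the sharper exponent without the corresponding argument.
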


\vskip 10 pt  The  proof of Theorem \ref{t1[asllt]..} mainly relies upon    on sharp correlation inequalities  for the associated level sets,
\begin{equation}\label{corr.asllt}|\P\{S_m=k_m, S_n= k_n\}-\P\{S_m=k_m\}\P\{S_n=k_n\}|
 .
 \end{equation}
which are also established in \cite{W3}, and   are much harder to get than  the correlation inequalities 
 \begin{equation}\label{corr.asclt}|\P\{S_m<k_m, S_n<k_n\}-\P\{S_m<k_m\}\P\{S_n<k_n\}|,  \end{equation}
needed for the proof of global a.s.\ central limit theorems, see e.g.\ Lacey  and Philipp \cite{LP}. 
\subsection{\bf General problem investigated}\label{GS} It is a well-known  in probability theory that the iid case and the independent case are always quite a different matter; one   for instance can take the example of the law of the iterated logarithm (\cite{P}). As announced,    we consider the independent, non necessarily identically distributed case, and our main objective   will be  to establish  in this very large setting  a general correlation inequality, next to apply it to prove the corresponding ASLLT. In fact we will prove  new general stronger forms of it, valid    in a more larger setting than  of the LLT,   this one being not assumed, and we also show that the validity conditions we found are nearly optimal. The search of that correlation inequality has revealed   new facts,  which result from the investigation of a more general, and in the same time, more complex  case.  

 \vskip 3 pt   
\subsection{\bf Characteristic  of a random variable} Let $Y$ be
  a random variable   such that  ${\mathbb P}\{Y
\in\mathcal L(v_0,D)\}=1$. We do not   assume  that the  span $D$ is maximal.  
  Introduce the following characteristic,  
\begin{eqnarray}\label{vartheta}  \t_Y =\sum_{k\in \Z}{\mathbb P}\{Y=v_k\}\wedge{\mathbb P}\{Y=v_{k+1}\} ,
\end{eqnarray}
 where $a\wedge b=\min(a,b)$.
Obviously $\t_Y$   depends on $D$.
 Note    that we \underline{always} have the relation
   \begin{eqnarray}\label{vartheta1}
0\le \t_Y<1 .\end{eqnarray}
When $Y$ has finite  variance
$\sigma^2$,   we have the following important liaison inequality,
 \begin{eqnarray}  \label{tD}\s^2 \,\ge\,  \frac{ D^2   }{4}\, \t_Y,
\end{eqnarray}
from which it follows that (since $X_i$ are independent, ${\rm Var}(X_1+\ldots + X_n)={\rm Var}(X_1)+\ldots + {\rm Var}(X_n))$,
 \begin{eqnarray}  \label{tDn}\qq \qq \qq \qq\  \s_n^2-\s_m^2\,\ge  \,\frac{ D^2   }{4}\ \sum_{j=m+1}^n \t_{X_j}, \qq \quad n\ge m\ge 1.
\end{eqnarray}    
For the proofs of \eqref{vartheta1}, \eqref{tD}, we refer the reader to the article Giuliano-Weber \cite{GW3}, subsection\,2.1, or to the recent monograph  Szewczak-Weber \cite{SW}, see (1.136) and after, also on the equivalence with the 
  \lq\lq smoothness\rq\rq characteristic  \begin{eqnarray}\label{delta}  \d_X =\sum_{m\in \Z}\big|{\mathbb P}\{X=m\}-{\mathbb P}\{X=m-1\}\big|,
\end{eqnarray}   
 introduced by Gamkrelidze in \cite{G2}. \bigskip \par

 \subsection{\bf General  Assumption}\label{nun} Throughout  this work   we assume that  
 \begin{equation}\label{nun1} \t_{X_j}>0, \qq \hbox{for each $j\ge 1$}.
  \end{equation}  
  
  We select $\t_j$ so that 
    \begin{equation}\label{nun2}
   \quad 0<\t_j\le \t_{X_j}  \qq \text{for each}\ j\ge 1,
 \end{equation}
 and we assume that \begin{equation}\label{nun3}
   \nu_n :=\sum_{j=1}^n \t_j \, \uparrow \infty, \qq \text{as}\  n\to \infty.
 \end{equation} 

The sequences    $\{{\rm Var}(X_j), j\ge 1\}$ and $\{\t_{X_j}, j\ge 1\}$ we consider,  are otherwise {\it arbitrary}, and  are the main parameters in this study.
Condition  (\ref{nun1}) is   natural in our setting. Assume the local limit
theorem (\ref{llt}) to be applicable to the sequence $X$. Let    $\{\k_n, n\ge 1\}$ be a sequence such that
 \begin{equation}\label{2..}  \lim_{n\to \infty} { \k_n-a_n   \over    \s_n  }= \k .
\end{equation}  
The local limit
theorem implies
\begin{equation}\label{2...}  \lim_{n\to \infty}  \s_n {\mathbb P}\{S_n=\k_n\}=
 {D\over  \sqrt{ 2\pi} }e^{-{ \k ^2\over  2   } }.
  \end{equation}
And so   $$ \lim_{n\to \infty} \s_n\P\{S_n=\ell_n\}= {D\over  \sqrt{ 2\pi} }e^{- 
{\k^2\over  2   } }, \qq (\ell_n\equiv \k_n\ {\rm or}\ \ell_n\equiv\k_n+D)  . 
$$
Then for some $n_\k<\infty$, $\P\{S_n=\k_n\}\wedge \P\{S_n=\k_n+1\}>0 $ if $n\ge n_\k$. Changing $X$ for   $X'=S_{n_\k}$, we see that the new sequence $X'$ satisfies
(\ref{nun1}).   This was used in the course of the proof of the ASLLT in \cite{W3}. It is true in general,   and   has some degree of importance, at the light of    assumption \eqref{nun1}.
   The second order theory of a probabilistic system  is a key   element   of its study. Combined with criteria of almost everywhere convergence,  it  allows one  to prove almost sure convergence results. This is the standard approach for treating these questions.
In this case here, we study the correlations properties of the system of   set's     indicators 
  $$\mathcal T(\k)= \Big\{\big({\bf 1}_{\{S_n=\k_n\}}-{\mathbb P}\{S_n=\k_n\} \big),\quad n\ge 1\Big\},$$ 
  where  $ \k=\{\k_n, n\ge 1\}$, and  $\k_n\in \mathcal L(nv_0,D)$ for each $n$.   
  \vskip 3 pt
The  
  general   correlation inequality as well as some corollaries are stated in the next Section. The proof is technically complicated and is given in Section \ref{s3}. An obvious  reason  is that  in addition to the  sequence of   variances   ${\rm Var}(X_j)$,  the  sequence of characteristics $\t_{X_j}$  is   involved in {\it all} estimates. 
In section \ref{s2.}, preliminary results are collected. We prove in Section \ref{s4} that for independent square integrable random variables,  the almost sure local limit theorem  still holds, under fairly reasonable conditions, but the proof  is  more involving.  
  We also prove    in Section \ref{sub2..1}, an almost sure local limit theorem with speed of convergence   in the i.i.d. square integrable case, and show the almost sure convergence of   tightly related random series. 

\medskip \par


\vskip 5 pt {\it Notation.} Throughout the paper,  the letter $C$ denotes a universal constant whose value may change at each occurence, and   $C_{\a, \b, \ldots}$ a denotes a constant depending only on the parameters ${\a, \b, \ldots}$.


\vskip 25 pt

\section{\gsec A GENERAL CORRELATION INEQUALITY}\label{s1a}
\vskip 10 pt 

\vskip 4 pt  We   prove the following result in which a new fact is that we do not  assume a local limit theorem to be applicable, neither   an integral limit theorem   to hold. 

 \begin{theorem}[{\bf Main Result}] \label{t2[asllt]} 
 Let  $\k_j\in \mathcal L(jv_0,D)$, $j=1,2,\ldots$,     be   such that 
 \begin{eqnarray}\label{nun.ka}{\rm(1)}& &\frac{\kappa_j-a_j}{\s_j}=\mathcal O(1 ),
 \cr {\rm(2)}&& {\it for \, all}\  j\ge i\ge0,\  (\s_j^2-\s_i^2)^{1/2}  \,{\mathbb P}\{S_j-S_i=\kappa_j-\kappa_i\}  ={\mathcal O}(1).
 \end{eqnarray} 
   
   Then there  exists a constant  
$C $  
  such that for all $1\le m<n$,
\begin{align} \label{t2[asllt]enonce1}
 \s_n&\s_m \, \Big|{\mathbb P}\{S_n=\k_n, S_m=\k_m\}- {\mathbb P}\{S_n=\k_n \}{\mathbb P}\{  S_m=\k_m\} \Big|
 \cr &      \,\le \,    \frac{C}{D^2}\, \max \Big(\frac{\s_n }{\sqrt{\nu_n}},\frac{\s_m }{\sqrt {\nu_m}} \Big)^3  \,\bigg\{
 \nu_n^{1/2}  \Big(\prod_{j=m+1}^n\vartheta_j    
+  {1  \over (\nu_n-\nu_m) ^{3/2}}  \Big)
+{ 1\over    \sqrt{{\nu_n\over \nu_m}}-1} \bigg\}.
 \end{align}
  \end{theorem}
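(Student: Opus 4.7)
The plan is to exploit the independence of $S_m$ and $T:=X_{m+1}+\cdots+X_n$ to reduce the covariance to the modulus of continuity of the lattice distribution of $T$, then to control that modulus by extracting the Bernoulli part of each $X_j$, $j>m$. Conditioning on $S_m$ in the joint event and averaging $\mathbb P\{S_n=\kappa_n\}$ over the distribution of $S_m$ yields
\begin{multline*}
\Delta:=\mathbb P\{S_n=\kappa_n,\,S_m=\kappa_m\}-\mathbb P\{S_n=\kappa_n\}\mathbb P\{S_m=\kappa_m\}\\
=\mathbb P\{S_m=\kappa_m\}\sum_{l\in\mathcal L(mv_0,D)}\mathbb P\{S_m=l\}\bigl[\mathbb P\{T=\kappa_n-\kappa_m\}-\mathbb P\{T=\kappa_n-l\}\bigr].
\end{multline*}
Writing $l=\kappa_m+Dq$ with $q\in\mathbb Z$, this displays $\Delta$ as an average, weighted by the law of $S_m$, of lattice increments $|\mathbb P\{T=u\}-\mathbb P\{T=u+Dq\}|$ with $u=\kappa_n-\kappa_m$ fixed.

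Next I would apply McDonald's Bernoulli extraction to each $X_j$, $m<j\le n$: on an enlarged probability space, $X_j=V_j+D\eta_j\beta_j$ with $\eta_j\sim\mathrm{Bernoulli}(\vartheta_j)$, $\beta_j$ uniform on $\{0,1\}$ and conditionally independent of $V_j$ given $\eta_j$, the quadruples being jointly independent across $j$. Summing, $T=V+DW$ where $W=\sum_{m<j\le n}\eta_j\beta_j$, and conditionally on the random set $A=\{j:\eta_j=1\}$ of cardinality $K=|A|$, $V$ is independent of $W$, the latter distributed as $\mathrm{Bin}(K,1/2)$. The classical smoothness of the symmetric binomial gives, for every integer $p$ and every $b$,
\[
|\mathbb P\{W=b\mid A\}-\mathbb P\{W=b+p\mid A\}|\le C\min\!\bigl(|p|/K^{3/2},\,1/\sqrt K\bigr).
\]
Convolving with the law of $V$ and averaging over $A$ --- using the concentration of $K$ around $\nu_n-\nu_m$ (since $\mathrm{Var}(K)\le\nu_n-\nu_m$) and separately controlling the degenerate contribution of $\{K\text{ small}\}$, which in the Fourier-inversion approach appears as the lattice-edge residual at $t=\pi/D$ --- yields
\[
|\mathbb P\{T=u\}-\mathbb P\{T=u+Dp\}|\le C\min\!\bigl(|p|/(\nu_n-\nu_m)^{3/2},\,1/\sqrt{\nu_n-\nu_m}\bigr)+C'\prod_{m<j\le n}\vartheta_j.
\]

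The final step substitutes this smoothness bound into the formula for $\Delta$ and splits the $q$-sum into a near regime ($D|q|\lesssim\sigma_m$), on which the $|p|/(\nu_n-\nu_m)^{3/2}$ bound applies, and a far regime, on which only the crude $1/\sqrt{\nu_n-\nu_m}$ bound is available. The near regime contributes $\lesssim\sigma_m/(\nu_n-\nu_m)^{3/2}$ via $\sum_l|l-\kappa_m|\,\mathbb P\{S_m=l\}\le\mathbb E|S_m-\kappa_m|\le C\sigma_m$ (from condition \eqref{nun.ka}(1) and Cauchy--Schwarz); the far regime, handled by Chebyshev's estimate $\mathbb P\{|S_m-\kappa_m|>c\sigma_m\}\lesssim c^{-2}$ and a suitable truncation, produces the third summand $1/(\sqrt{\nu_n/\nu_m}-1)$. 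Multiplying by $\mathbb P\{S_m=\kappa_m\}\le C/\sigma_m$ (condition \eqref{nun.ka}(2)) and by the normalization $\sigma_n\sigma_m$, and using the liaison inequality \eqref{tDn}, $\sigma_j\ge(D/2)\sqrt{\nu_j}$, to convert each stray $\sigma_j$ into $\sqrt{\nu_j}$ times $\sigma_j/\sqrt{\nu_j}\le\max(\sigma_n/\sqrt{\nu_n},\sigma_m/\sqrt{\nu_m})$, delivers the claimed bound \eqref{t2[asllt]enonce1}. The hardest point will be the bookkeeping at this last step: three distinct $\sigma_j/\sqrt{\nu_j}$ factors must be extracted --- two from the $\sigma_n\sigma_m$ prefactor together with one more from the $\sigma_m$ produced by $\mathbb E|S_m-\kappa_m|$ --- and each of the three summands in the braces of \eqref{t2[asllt]enonce1} must be packaged separately so that the final bound is simultaneously sharp in the three scales $\nu_m$, $\nu_n-\nu_m$, and $\nu_n/\nu_m$; it is this simultaneous sharpness, rather than any individual estimate, that makes the proof delicate.
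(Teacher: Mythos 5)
Your route is sound and genuinely different from the paper's. Both begin from the identity $\Delta=\mathbb P\{S_m=\kappa_m\}\big[\mathbb P\{T=\kappa_n-\kappa_m\}-\mathbb P\{S_n=\kappa_n\}\big]$ with $T=X_{m+1}+\cdots+X_n$, which is the paper's \eqref{basic}. The paper then extracts the Bernoulli part of \emph{all} $n$ summands, so that conditionally on $(V,\e)$ the two probabilities become binomial probabilities with \emph{different numbers of trials} ($B_n-B_m$, resp.\ $B_n$); each is replaced by a Gaussian density via Proposition~\ref{lltber}, and the heart of the argument is comparing two Gaussian densities of different widths (the $A''_1$ computation and the three estimates \eqref{three.est}). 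You instead expand $\mathbb P\{S_n=\kappa_n\}$ as a convolution, turning $\Delta$ into a weighted average of lattice increments of the law of $T$ alone, and extract the Bernoulli part of $X_{m+1},\dots,X_n$ only, so these increments are controlled by the smoothness modulus $|\mathbb P\{\mathrm{Bin}(K,\tfrac12)=b\}-\mathbb P\{\mathrm{Bin}(K,\tfrac12)=b+p\}|\le C\min(|p|K^{-3/2},K^{-1/2})$ of a \emph{single} binomial. This removes the Gaussian comparison altogether and replaces Proposition~\ref{lltber} by an elementary estimate, which is conceptually cleaner. The price is that the third summand $1/(\sqrt{\nu_n/\nu_m}-1)$, which in the paper falls out directly from the ratio $B_n/B_m$, must in your route be reconstructed at the end by a case split: the crude bound $|\mathrm{sum}|\le C/\sqrt{\nu_n-\nu_m}$ suffices when $\nu_n\le 2\nu_m$, and the refined near/far estimate is used when $\nu_n>2\nu_m$, each piece being converted to the shape of \eqref{t2[asllt]enonce1} via $\sigma_j\le\max(\cdots)\sqrt{\nu_j}$ and $\max(\cdots)\ge D/2$. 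You flag this bookkeeping as delicate; it is, and it has to be done.

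Two points of care. First, $\mathrm{Var}(K)\le\nu_n-\nu_m$ gives, via Chebyshev, a residual of order $(\nu_n-\nu_m)^{-1}$, which does not fit into any summand of \eqref{t2[asllt]enonce1} uniformly; use instead the Chernoff bound of Lemma~\ref{di.1}, exactly as the paper does in \eqref{nun1.}, to obtain an exponentially small residual $\rho^{\nu_n-\nu_m}$. Second, the event that the Bernoulli indicators all vanish on $(m,n]$ has probability $\prod_{m<j\le n}(1-\vartheta_j)$, not $\prod_{m<j\le n}\vartheta_j$; your sketch, mirroring the paper's \eqref{e1}, writes the latter, but an honest execution of your degenerate-case step delivers the former, and the two are not comparable in general. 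You should reconcile which product your argument actually produces before declaring \eqref{t2[asllt]enonce1} proved.
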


  The proof   is   delicate and   long. 
  Some comments are in order.

\begin{comments}\label{comments.t2[asllt]}\vskip 2 pt (1)   It is necessary to observe that $\t_{X_j}$ cannot be too small. This follows from the  compensation effect  existing between assumptions \eqref{nun2} and \eqref{nun3}. 
\vskip 2 pt (2) Condition \eqref{nun3}   implies that $\{S_n, n\ge 1\}$ is   asymptotically uniformly distributed, a.u.d. in short. See Weber \cite{WM1}, Th.\,3.3. It is known that the local limit theorem is applicable to the sequence $\{S_n, n\ge 1\}$ only if the a.u.d. property is satisfied. 

  \vskip 2 pt (3) It is also known that if $ X$ satisfies a local limit theorem in the strong form, it is necessary  that Rozanov's condition be fulfilled, namely that
\begin{equation}\label{roz}
\sum_{k=1}^\infty \ \min_{0\le m<q} {\mathbb P}\big\{ X_k\not\equiv m\, ({\rm mod} \ q) \big\}= \infty,\qq \hbox{for all integers $q\ge 2$.}
\end{equation}
 That condition is also sufficient in some important examples, in particular if    $X_j$ have stable limit distribution, see Mitalauskas  \cite{Mit}. 
  
\vskip 2 pt 
(4) 
 Choosing  $\t_j$ smaller (see (1) however)  may increase  the size of the correlation bound, up to some extend, and on a case-by-case basis. 
   
\vskip 2 pt  (5) Although $0< \t_{X_j}<1$  for each $j$,   we may have   $\t_{X_j}$   arbitrary close to 1, for infinitely many $j$, and so assumption \eqref{unif.bound.varthetaj} below  is no longer true  if we pick $\t_j$ near $\t_{X_j}$, for each $j$ (according to previous remark).     It is interesting to
  estimate   
 $\prod_{j=m+1}^n\t_{X_j}  $ in this case on an  example. 
 \begin{example}\label{example1}  Let $X_j$ be independent random variables, each defined as follows, 
 \begin{equation*} 
\P\{ X_j= v_m\}= \begin{cases} 0 &\quad\hbox{if\ }m\le n_j \hbox{\ or if\ } m>n_j+b_j,\cr
\frac{1}{b_j} & \quad\hbox{if\ } m=n_j+1, \ldots, n_j+b_j, \end{cases}
 \end{equation*}
where $n_j,b_j$ are positive integers,   $n_j$ can be all equal, $b_j\to \infty$ with $j$,  and the series $\sum_{j\ge 1} \frac1{b_j}$ diverges. 
We have  $\t_{X_j }= \sum_{k\in \Z}{\mathbb P}\{X_j=v_k\}\wedge{\mathbb P}\{X_j=v_{k+1}\} =\sum_{m=n_j+1 }^{n_j+b_j-1}{\mathbb P}\{X_j=v _m\}\wedge{\mathbb P}\{X_j=v_{m+1}\}$, and so $\t_{X_j }=1-\frac1{b_j}$, for each $j$. 
Thus for some $b>0$,
$$\prod_{j=m+1}^n\t_{X_j} =\prod_{j=m+1}^n\big(1-\frac1{b_j}\big)
 .$$
 If for instance $b_j =\frac1j$, this product contributes   for $\frac{m}{n}$. Choosing $\t_j$ sufficiently close to $ \t_{X_j}$, for each $j$, we have $\prod_{j=m+1}^n\t_{j}\approx\,\frac{m}{n}$,   which, for $m=o(n)$, is bigger   than $ {1  \over (\nu_n-\nu_m) ^{3/2}}\sim {1  \over ( n- m) ^{3/2}}$ in the right-term of inequality \eqref{t2[asllt]enonce1}. 
 \end{example} 
\end{comments}
\vskip 5 pt The following Corollaries are immediate consequences of Theorem \ref{t2[asllt]}.

\begin{corollary}\label{c1[cor.est]}Under   assumptions of Theorem \ref{t2[asllt]}, suppose that $\t_n$ are \underline{chosen} so that 
   \begin{equation}\label{unif.bound.varthetaj}\tau:=\sup_{j\ge 1} \t_{ j}<1.\end{equation} 

 Then we  have the simplified bound,
 \begin{align}\label{t2[asllt]enonce2} 
 \s_n&\s_m \, \Big|{\mathbb P}\{S_n=\k_n, S_m=\k_m\}- {\mathbb P}\{S_n=\k_n \}{\mathbb P}\{  S_m=\k_m\} \Big|
 \cr &  \, \le        \,   \frac{C_{\tau }}{D^2}\, \max \big(\frac{\s_n }{\sqrt{\nu_n}},\frac{\s_m }{\sqrt {\nu_m}} \big)^3
 \bigg\{\,  { 1\over    \sqrt{{\nu_n\over \nu_m}}-1}+\,{\sqrt{\nu_n} \over
 (\nu_n-\nu_m) ^{3/2}} \bigg\}.
\end{align}
 \end{corollary}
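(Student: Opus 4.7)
The plan is to derive the simpler bound \eqref{t2[asllt]enonce2} directly from Theorem \ref{t2[asllt]} by showing that, under the additional hypothesis $\tau:=\sup_{j\ge 1}\vartheta_j<1$, the product factor $\nu_n^{1/2}\prod_{j=m+1}^n\vartheta_j$ appearing in \eqref{t2[asllt]enonce1} is dominated, up to a constant depending only on $\tau$, by the already present term $\nu_n^{1/2}/(\nu_n-\nu_m)^{3/2}$. Once this absorption is achieved, nothing remains but to collect constants.

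First I would observe that, since $\vartheta_j\le\tau$ for every $j$, we trivially have $\prod_{j=m+1}^n\vartheta_j\le\tau^{n-m}$. Next, writing $k=n-m$ and $s=\nu_n-\nu_m=\sum_{j=m+1}^n\vartheta_j$, the same pointwise bound $\vartheta_j\le\tau$ forces $s\le\tau k$, i.e.\ $k\ge s/\tau$. Combining these two inequalities yields
\[
\prod_{j=m+1}^n\vartheta_j\ \le\ \tau^{(\nu_n-\nu_m)/\tau}.
\]

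The core, and essentially the only substantive, step is then a one-variable envelope estimate: the function $g(x)=x^{3/2}\tau^{x/\tau}$ is continuous on $[0,\infty)$, vanishes at $x=0$ and decays exponentially as $x\to\infty$, so that $C_\tau:=\sup_{x>0}g(x)$ is finite. This delivers $\tau^{(\nu_n-\nu_m)/\tau}\le C_\tau/(\nu_n-\nu_m)^{3/2}$ and hence
\[
\nu_n^{1/2}\prod_{j=m+1}^n\vartheta_j\ \le\ C_\tau\,\frac{\nu_n^{1/2}}{(\nu_n-\nu_m)^{3/2}}.
\]
Substituting this into the right-hand side of Theorem \ref{t2[asllt]} absorbs the product term into the polynomial term with multiplicative constant $1+C_\tau$, while the term $1/(\sqrt{\nu_n/\nu_m}-1)$ is carried through unchanged; renaming constants gives exactly \eqref{t2[asllt]enonce2}.

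I do not anticipate any real obstacle: the uniform boundedness of $g$ is immediate, since exponential decay beats any polynomial, and all the genuine analytic work has already been done inside Theorem \ref{t2[asllt]}. The corollary is thus a purely cosmetic simplification enabled by the uniform gap hypothesis \eqref{unif.bound.varthetaj}, whose only effect is to trade the delicate product $\prod\vartheta_j$ for the more manageable power $(\nu_n-\nu_m)^{-3/2}$.
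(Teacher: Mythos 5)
Your proof is correct and follows essentially the same route as the paper: both arguments reduce to the exponential bound $\prod_{j=m+1}^n\vartheta_j\le e^{-\frac{\log(1/\tau)}{\tau}(\nu_n-\nu_m)}$ obtained from $\sup_j\vartheta_j\le\tau<1$, and then absorb this exponential into $(\nu_n-\nu_m)^{-3/2}$ since exponentials dominate powers. The paper organizes the book-keeping via the termwise inequality $\log\frac{1}{\vartheta_j}\ge\frac{\log(1/\tau)}{\tau}\,\vartheta_j$ together with $e^{-u}\le C_M u^{-M}$ for $M=3/2$, whereas you pass through $\tau^{n-m}$ and $n-m\ge(\nu_n-\nu_m)/\tau$ before invoking the boundedness of $x\mapsto x^{3/2}\tau^{x/\tau}$, but the two derivations are mathematically identical.
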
 
 Note that we do \underline{not} require the below different condition 
 \begin{equation}\label{unif.bound.varthetaj.}\tau_X:=\sup_{j\ge 1} \t_{X_j}<1,\end{equation} 
 to be hold. See also at this regard   (4) in Comments \ref{comments.t2[asllt]}.
Condition  \eqref{unif.bound.varthetaj} trivially holds in the  i.i.d. case, choosing  $\vartheta=\t_{X_1}>0$. Thus the previous estimate  contains the correlation estimate in the i.i.d. square integrable case, with weaker requirements; in particular we do not assume a  LLT to be applicable unlike   in \cite{W3}. 
 
 \vskip 2 pt We state:

\begin{corollary}[{\bf i.i.d.\! case}]\label{c2[cor.est.i.i.d]} Let $X$ be a square integrable  random variable taking values on the lattice $\mathcal L(v_0,D)= \{v_0+kD, k\in \mathbb Z\}$  with maximal span $D$. Let $\m ={\mathbb E\,} X$,
$\s^2={\rm Var} (X)>0$ and assume that $\t_{X}>0$.  Let also $ \{X_k, k\ge 1\}$ be independent copies of
$X$, and put
$S_n=X_1+\ldots +X_n$, $n\ge 1$.  Let  $\k_j\in \mathcal L(jv_0,D)$, $j=1,2,\ldots$   be   such that \begin{equation}\label{nun.ka.iid}{\rm(1)}\qquad\frac{\kappa_j- j\m}{ \s\sqrt j}={\mathcal O}  (1), \qq\quad  {\rm(2)}\qquad   \s \sqrt j \,{\mathbb P}\{S_j=\kappa_j\}  ={\mathcal O}  (1).
 \end{equation}
 Then  for all $1\le m<n$,
\begin{align} \label{t2[asllt]enonce1.iid}
\s^2\sqrt {nm}&  \, \Big|{\mathbb P}\{S_n=\k_n, S_m=\k_m\}- {\mathbb P}\{S_n=\k_n \}{\mathbb P}\{  S_m=\k_m\} \Big|
 \cr &      \,\le \,    \frac{C_\vartheta}{D^2}\,    \,\Big\{
         {n^{1/2}   \over  (n-m)  ^{3/2}}  
+{ 1\over    \sqrt{{ n\over  m}}-1} \Big\}.
 \end{align}
\end{corollary}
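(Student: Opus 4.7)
The plan is to deduce this statement as a direct specialization of Corollary~\ref{c1[cor.est]} to the i.i.d.\ setting, with the choice $\vartheta_j \equiv \vartheta_X$ for every $j \ge 1$.

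First I would verify that all hypotheses of Corollary~\ref{c1[cor.est]} are met. Since $0 < \vartheta_X < 1$ by \eqref{vartheta1}, setting $\vartheta_j = \vartheta_X$ satisfies \eqref{nun2} and the uniform bound \eqref{unif.bound.varthetaj} with $\tau = \vartheta_X$. The partial sums $\nu_n = n\vartheta_X$ tend to infinity, giving \eqref{nun3}. In the i.i.d.\ case $a_j = j\mu$ and $\sigma_j = \sigma\sqrt{j}$, so conditions \eqref{nun.ka.iid}(1)--(2) are precisely conditions \eqref{nun.ka}(1)--(2) of the main theorem.

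Next I would substitute these explicit values into the inequality \eqref{t2[asllt]enonce2}. The key observation is that $\sigma_n/\sqrt{\nu_n} = \sigma/\sqrt{\vartheta_X}$ is independent of $n$, so the cubed maximum in \eqref{t2[asllt]enonce2} collapses to a constant depending only on $\sigma$ and $\vartheta_X$, to be absorbed into $C_\vartheta$. Likewise $\sqrt{\nu_n/\nu_m} = \sqrt{n/m}$ reproduces the second bracketed term exactly, while $\sqrt{\nu_n}/(\nu_n - \nu_m)^{3/2} = \vartheta_X^{-1}\, n^{1/2}/(n-m)^{3/2}$, the $\vartheta_X^{-1}$ being absorbed as well. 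Combining with $\sigma_n\sigma_m = \sigma^2\sqrt{nm}$ on the left yields the stated bound.

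No substantive obstacle arises: this is essentially bookkeeping once Corollary~\ref{c1[cor.est]} is granted. The only delicate point worth flagging is that in the i.i.d.\ case the product contribution $\nu_n^{1/2}\prod_{j=m+1}^n \vartheta_j = \sqrt{n\vartheta_X}\,\vartheta_X^{n-m}$ that appears in the general Theorem~\ref{t2[asllt]} decays exponentially in $n-m$ and is therefore dominated, up to a constant depending only on $\vartheta_X$, by the polynomial term $n^{1/2}/(n-m)^{3/2}$ (the ratio $\vartheta_X^{n-m+3/2}(n-m)^{3/2}$ being uniformly bounded for $n-m \ge 1$ since $\vartheta_X < 1$); this is exactly the simplification that allows Corollary~\ref{c1[cor.est]} to be invoked rather than the full theorem.
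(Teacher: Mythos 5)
Your proposal is correct and follows essentially the same route as the paper: the paper states Corollary~\ref{c2[cor.est.i.i.d]} as an immediate consequence of Corollary~\ref{c1[cor.est]}, remarking that condition~\eqref{unif.bound.varthetaj} holds trivially in the i.i.d.\ case with $\vartheta=\vartheta_{X_1}$, and your specialization $\vartheta_j\equiv\vartheta_X$, $\nu_n=n\vartheta_X$, $\sigma_n=\sigma\sqrt{n}$ is exactly the bookkeeping that realizes this. Your closing remark explaining why the exponentially small product $\nu_n^{1/2}\vartheta_X^{n-m}$ is dominated by $\sqrt{\nu_n}/(\nu_n-\nu_m)^{3/2}$ is also the same mechanism the paper uses when it passes from Theorem~\ref{t2[asllt]} to Corollary~\ref{c1[cor.est]} (there via the bound $\prod_{j=m+1}^n\vartheta_j\le C(\tau)/(\nu_n-\nu_m)^{3/2}$), so nothing is missing.
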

Now we pass to another Corollary, which the proof is given in Section \ref{s3}.

 \begin{corollary}\label{cort1}      Let $0<c<1$. Under assumptions of Theorem \ref{t2[asllt]} and \eqref{unif.bound.varthetaj},  the following assertions are fulfilled:
 \vskip 2 pt {\rm (1)} There   exists a constant
$C_{\tau, c}  $  such that for all $1\le \nu_m\le c\nu_n$,
\begin{align*} 
 \s_n \s_m \, \, \Big|{\mathbb P}\{S_n=\k_n, S_m=\k_m\}&- {\mathbb P}\{S_n=\k_n \}{\mathbb P}\{  S_m=\k_m\} \Big|
   \cr &      \,\le \,   \frac{C_{\tau, c}}{D^2}\, \max \big(\frac{\s_n }{\sqrt{\nu_n}},\frac{\s_m }{\sqrt {\nu_m}} \big)^3  \,\sqrt{\nu_m\over \nu_n}.
    \end{align*}

\vskip 2 pt {\rm (2)} Further  if, 
\begin{equation}\label{snlenun} \s_n = \mathcal O( \sqrt {\nu_n}),\end{equation}
then for some other constant $C'_{\tau, c}$, 
we have for all $1\le \nu_m\le c\nu_n$, 
\begin{equation}\label{c.bound} 
 \s_n \s_m \, \, \Big|{\mathbb P}\{S_n=\k_n, S_m=\k_m\} - {\mathbb P}\{S_n=\k_n \}{\mathbb P}\{  S_m=\k_m\} \Big|
        \,\le \,   \frac{C'_{\tau, c}}{D^2}\,  \sqrt{\nu_m\over \nu_n}.
    \end{equation}
    \end{corollary}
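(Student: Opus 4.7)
The plan is to derive both assertions as direct consequences of Corollary \ref{c1[cor.est]}, by restricting its estimate \eqref{t2[asllt]enonce2} to the regime $\nu_m\le c\nu_n$. The work consists entirely in bounding each of the two bracket terms $\frac{1}{\sqrt{\nu_n/\nu_m}-1}$ and $\frac{\sqrt{\nu_n}}{(\nu_n-\nu_m)^{3/2}}$ by a constant multiple of $\sqrt{\nu_m/\nu_n}$.

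For the first term, I would rewrite
\begin{equation*}
\frac{1}{\sqrt{\nu_n/\nu_m}-1} \,=\, \frac{\sqrt{\nu_m}}{\sqrt{\nu_n}-\sqrt{\nu_m}},
\end{equation*}
and use that $\sqrt{\nu_m}\le\sqrt{c}\,\sqrt{\nu_n}$ to get $\sqrt{\nu_n}-\sqrt{\nu_m}\ge (1-\sqrt{c})\sqrt{\nu_n}$; this immediately gives the upper bound $\frac{1}{1-\sqrt c}\sqrt{\nu_m/\nu_n}$. For the second term, $\nu_n-\nu_m\ge (1-c)\nu_n$ yields $\frac{\sqrt{\nu_n}}{(\nu_n-\nu_m)^{3/2}}\le \frac{1}{(1-c)^{3/2}\,\nu_n}$, and then one converts this into the desired form by writing
\begin{equation*}
\frac{1}{\nu_n}\,=\,\frac{\sqrt{\nu_m/\nu_n}}{\sqrt{\nu_n\nu_m}}\,\le\,\frac{1}{\nu_1}\sqrt{\nu_m/\nu_n},
\end{equation*}
the last inequality using $\sqrt{\nu_n\nu_m}\ge \nu_m\ge \nu_1>0$ (valid since $\{\nu_n\}$ is increasing and $\nu_1=\vartheta_1>0$ by assumption \eqref{nun1}--\eqref{nun2}). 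Summing the two contributions and plugging back into \eqref{t2[asllt]enonce2} delivers assertion~(1) with $C_{\tau,c}=C_\tau\big(\frac{1}{1-\sqrt c}+\frac{1}{(1-c)^{3/2}\nu_1}\big)$.

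For assertion~(2), I would simply observe that the hypothesis \eqref{snlenun} provides a constant $K$ with $\sigma_n\le K\sqrt{\nu_n}$ \emph{for every} $n$, so in particular $\sigma_m/\sqrt{\nu_m}\le K$ as well. Hence
\begin{equation*}
\max\Big(\tfrac{\sigma_n}{\sqrt{\nu_n}},\tfrac{\sigma_m}{\sqrt{\nu_m}}\Big)^3\le K^3,
\end{equation*}
and substituting this bound into~(1) yields~\eqref{c.bound} with $C'_{\tau,c}=K^3C_{\tau,c}$.

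There is no substantial obstacle: both steps are purely algebraic manipulations of the right-hand side of \eqref{t2[asllt]enonce2}. The only mildly subtle point is the passage $1/\nu_n\lesssim\sqrt{\nu_m/\nu_n}$, which requires the lower bound $\nu_m\ge\nu_1>0$; this is precisely where the standing assumption \eqref{nun1} enters, confirming the remark in Comments \ref{comments.t2[asllt]}(1) that $\vartheta_{X_j}$ cannot be allowed to be arbitrarily small.
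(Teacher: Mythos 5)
Your proposal is correct and follows essentially the same route as the paper: bound the bracket term $\frac{1}{\sqrt{\nu_n/\nu_m}-1}$ by $\frac{1}{1-\sqrt c}\sqrt{\nu_m/\nu_n}$ and the term $\frac{\sqrt{\nu_n}}{(\nu_n-\nu_m)^{3/2}}$ by $\frac{1}{(1-c)^{3/2}\nu_n}$, then substitute into estimate \eqref{t2[asllt]enonce2}. The only deviation is at the step $1/\nu_n\lesssim\sqrt{\nu_m/\nu_n}$: you invoke $\nu_m\ge\nu_1>0$ and absorb $1/\nu_1$ into the constant, whereas the statement's hypothesis already provides $\nu_m\ge 1$, which gives $\nu_n\nu_m\ge 1$ and hence $1/\nu_n\le\sqrt{\nu_m/\nu_n}$ directly, with no $\nu_1$-dependence in $C_{\tau,c}$. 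This is a harmless but unnecessary detour; otherwise the argument, including the reduction for assertion (2) via $\max(\sigma_n/\sqrt{\nu_n},\sigma_m/\sqrt{\nu_m})\le K$, matches the paper's.
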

\begin{remark}\label{remark.c.bound} Assumption \eqref{snlenun} implies in view of \eqref{tDn} that $\s_n\asymp \sqrt {\nu_n}$.
\vskip 2 pt Thus for the case \begin{equation}\label{c.bound.}\s_n\asymp \sqrt {\nu_n},\qq  \sup_{j\ge 1} \t_{ j}<1.
\end{equation}
estimate  \eqref{c.bound} holds, and  provides  a very handable correlation bound. 
 \end{remark}


\begin{remark}\label{simple.correlation.bound} We indicate here a   simple correlation bound, involving only the sequence $\{\s_n,  n\ge 1\}$. 
For $n>m\ge 1$,
\begin{align*}
   \s_m\s_n\Big|{\mathbb P} \{S_m =\kappa_m\,,\,S_{n } =\kappa_n\}-{\mathbb P}\{ S_{ m} = \kappa_m\}  {\mathbb P} \{S_n
=\kappa_n
\}\Big|    \le  C\, \Big(\frac {\s_n}{\s_{n-m}}+1\Big).
\end{align*}
The  proof is elementary, see \eqref{basic.corr.bd}. \end{remark}


\vskip 25 pt 


 \section{\gsec  PRELIMINARIES}\label{s2.}
 \vskip 10 pt 

 \subsection{The Bernoulli part extraction of a random variable.}\label{sub2.1}

  Let $X$ be
  a random variable   such that  ${\mathbb P}\{X
\in\mathcal L(v_0,D)\}=1$. 
We assume  that 
\begin{equation}\label{varthteta2}\t_X>0.
\end{equation}
Let $ f(k)= {\mathbb P}\{X= v_k\}$, $k\in \Z$. Let also $0<\t\le\t_X$. One can associate to $\t$ and $X$  a
sequence $  \{ \tau_k, k\in \Z\}$     of   non-negative reals such that
\begin{equation}\label{basber0}  \tau_{k-1}+\tau_k\le 2f(k), \qq  \qq\sum_{k\in \Z}  \tau_k =\t.
\end{equation}
Take for instance  
 $\tau_k=  \frac{\t}{\nu_X} \, (f(k)\wedge f(k+1))  $.
   Now   define   a pair of random variables $(V,\e)$   as follows:
  \begin{eqnarray}\label{ve} \qq\qq\begin{cases} {\mathbb P}\{ (V,\e)=( v_k,1)\}=\tau_k,      \cr
 {\mathbb P}\{ (V,\e)=( v_k,0)\}=f(k) -{\tau_{k-1}+\tau_k\over
2}    .  \end{cases}\qq (\forall k\in \Z)
\end{eqnarray}
   By assumption     this is   well-defined,   and the margin  laws verify
\begin{eqnarray}\begin{cases}{\mathbb P}\{ V=v_k\} &= \  f(k)+ {\tau_{k }-\tau_{k-1}\over 2} ,
\cr
 {\mathbb P}\{ \e=1\} &= \ {\vartheta} \ =\ 1-{\mathbb P}\{ \e=0\}   .
\end{cases}\end{eqnarray}
Indeed, ${\mathbb P}\{ V=v_k\}= {\mathbb P}\{ (V,\e)=( v_k,1)\}+ {\mathbb P}\{ (V,\e)=( v_k,0)\}=f(k)+ {\tau_{k }-\tau_{k-1}\over 2} .$
 Further  ${\mathbb P}\{ \e=1\}  =\sum_{k\in\Z} {\mathbb P}\{ (V,\e)=( v_k,1)\}=\sum_{k\in\Z} \tau_{k }={\vartheta}  $.

 \begin{lemma} \label{bpr} Let $L$
be a Bernoulli random variable    which is independent of  $(V,\e)$, and put
 $Z= V+ \e DL$.
We have $Z\buildrel{\mathcal D}\over{ =}X$.
\end{lemma}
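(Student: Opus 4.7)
The plan is to verify directly that $\P\{Z = v_k\} = f(k)$ for every $k \in \Z$, which together with the fact that both $Z$ and $X$ are supported on the lattice $\mathcal L(v_0, D)$ will yield $Z \buildrel{\mathcal D}\over{=} X$. One small observation up front: for the computation to balance, $L$ must be the symmetric Bernoulli variable, $\P\{L = 0\} = \P\{L = 1\} = 1/2$, and I take this to be the intended meaning of ``Bernoulli'' in the statement.

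I would decompose on the value of $\e$. On $\{\e = 0\}$ we have $Z = V$, so the contribution to $\P\{Z = v_k\}$ is exactly $\P\{V = v_k, \e = 0\} = f(k) - \frac{\tau_{k-1}+\tau_k}{2}$, which is read directly off the second line of \eqref{ve}. On $\{\e = 1\}$ we have $Z = V + DL \in \{V, V+D\}$, so $Z = v_k$ occurs either when $V = v_k$ and $L = 0$, or when $V = v_{k-1}$ and $L = 1$. Using the independence of $L$ from $(V,\e)$ together with $\P\{V = v_j, \e = 1\} = \tau_j$ from the first line of \eqref{ve}, these two subcases contribute $\tfrac{1}{2}\tau_k$ and $\tfrac{1}{2}\tau_{k-1}$ respectively.

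Summing the three contributions gives
\begin{equation*}
\P\{Z = v_k\} \,=\, \Big(f(k) - \frac{\tau_{k-1}+\tau_k}{2}\Big) + \frac{\tau_k}{2} + \frac{\tau_{k-1}}{2} \,=\, f(k),
\end{equation*}
which is the desired identity; since $k \in \Z$ is arbitrary, this would conclude the argument.

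I do not anticipate any serious obstacle: the content of the lemma lies entirely in the carefully chosen joint law of $(V, \e)$ introduced in \eqref{ve}, which was designed precisely so that this telescoping occurs. The only step that demands a touch of care is the disjoint decomposition of $\{Z = v_k,\, \e = 1\}$ according to the value of $L$, and checking that the inequality $\tau_{k-1} + \tau_k \le 2 f(k)$ from \eqref{basber0} guarantees the $\e = 0$ weight remains nonnegative (so that the construction is well-posed); beyond that the proof is a direct verification.
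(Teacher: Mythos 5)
Your proof is correct and follows essentially the same route as the paper's: decompose on $\{\e=0\}$ and $\{\e=1\}$, use independence of $L$ from $(V,\e)$ to split the $\e=1$ event by the value of $L$, and observe the telescoping $f(k)-\tfrac{\tau_{k-1}+\tau_k}{2}+\tfrac{\tau_k}{2}+\tfrac{\tau_{k-1}}{2}=f(k)$. Your explicit remark that $L$ is the symmetric Bernoulli variable with $\P\{L=0\}=\P\{L=1\}=\tfrac12$ is indeed the intended reading and is used implicitly in the paper's computation.
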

\begin{proof}[Proof]
 Plainly,
\begin{eqnarray*}{\mathbb P}\{Z=v_k\}&=&{\mathbb P}\big\{ V+\e DL=v_k, \e=1\}+ {\mathbb P}\big\{ V+\e DL=v_k, \e=0\} \cr
&=&{{\mathbb P}\{ V=v_{k-1}, \e=1\}+{\mathbb P}\{
V=v_k, \e=1\}\over 2} +{\mathbb P}\{ V=v_k, \e=0\}
\cr&=& {\tau_{k-1}+ \tau_{k }\over 2} +f(k)-{\tau_{k-1}+ \tau_{k
}\over 2}
= f(k).
\end{eqnarray*}
\end{proof} Consider now independent random variables  $ X_j,j=1,\ldots,n$,    and assume that
\begin{equation}\label{basber.j}  \t_{X_j}>0, \qq \quad  j=1,\ldots, n.
\end{equation}
Let $0<\t_j\le \t_{X_i}$, $j=1,\ldots, n$.  
 One can  associate to them a
sequence of independent vectors $ (V_j,\e_j, L_j) $,   $j=1,\ldots,n$  such that
 \begin{eqnarray}\label{dec0} \big\{V_j+\e_jD   L_j,j=1,\ldots,n\big\}&\buildrel{\mathcal D}\over{ =}&\big\{X_j, j=1,\ldots,n\big\}  .
\end{eqnarray}

Further the sequences $\{(V_j,\e_j),j=1,\ldots,n\}
 $   and $\{L_j, j=1,\ldots,n\}$ are independent.
For each $j=1,\ldots,n$, the law of $(V_j,\e_j)$ is defined according to (\ref{ve}) with $\t=\t_j$.  And $\{L_j, j=1,\ldots,n\}$ is  a sequence  of
independent Bernoulli random variables. Set
 \begin{equation}\label{dec} S_n =\sum_{j=1}^n X_j, \qq  W_n =\sum_{j=1}^n V_j,\qq M_n=\sum_{j=1}^n  \e_jL_j,  \quad B_n=\sum_{j=1}^n
 \e_j .
\end{equation}

\begin{proposition}\label{lmd}We have\begin{eqnarray*} \{S_k, 1\le k\le n\}&\buildrel{\mathcal D}\over{ =}&  \{ W_k  +  DM_k, 1\le k\le n\} .
\end{eqnarray*}
And  $M_n\buildrel{\mathcal D}\over{ =}\sum_{j=1}^{B_n } L_j$.
 \end{proposition}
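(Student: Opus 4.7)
The plan is to bootstrap Lemma \ref{bpr} from one variable to the whole sequence via independence. For each individual index $j$, Lemma \ref{bpr} already gives $V_j+\varepsilon_j D L_j\buildrel{\mathcal D}\over= X_j$. The construction in the statement explicitly makes the family $\{(V_j,\varepsilon_j,L_j),\ 1\le j\le n\}$ into an \emph{independent} family of triples (the blocks $(V_j,\varepsilon_j)$ are independent across $j$ by definition, and the sequence $\{L_j\}$ is both i.i.d. Bernoulli and independent of $\{(V_j,\varepsilon_j)\}$). Hence the joint law of the vector $(V_j+\varepsilon_j D L_j)_{1\le j\le n}$ factorizes as the product of the individual marginal laws, which by Lemma \ref{bpr} is exactly the product law of $(X_j)_{1\le j\le n}$. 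Equality of joint laws is preserved under any measurable map; applying this to the partial-sum map $(y_1,\dots,y_n)\mapsto(y_1,y_1+y_2,\dots,\sum_{j\le k}y_j,\dots)$ and recalling the definitions \eqref{dec} of $W_k$ and $M_k$ yields the process-level identity
\[
\{S_k,\ 1\le k\le n\}\buildrel{\mathcal D}\over=\{W_k+DM_k,\ 1\le k\le n\}.
\]

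For the second assertion, I would condition on $\mathcal E=\sigma(\varepsilon_1,\dots,\varepsilon_n)$. Since the Bernoulli sequence $\{L_j\}$ is independent of $\{(V_j,\varepsilon_j)\}$, conditionally on $\{\varepsilon_j=e_j\}$ the random variable $M_n=\sum_{j=1}^n e_j L_j$ is a sum of exactly $\sum_j e_j$ independent copies of $L_1$. Its conditional distribution thus depends only on $B_n=\sum_j\varepsilon_j$, and for every integer $k\ge 0$,
\[
\mathbb P\{M_n=k\mid B_n=b\}=\mathbb P\Big\{\sum_{j=1}^{b}L_j=k\Big\}.
\]
Averaging against the law of $B_n$, and interpreting the random-index sum with a fresh i.i.d. Bernoulli sequence independent of $B_n$, gives $M_n\buildrel{\mathcal D}\over=\sum_{j=1}^{B_n}L_j$, as claimed.

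The proof is essentially bookkeeping once Lemma \ref{bpr} is available; the only mild subtlety, and the one point to state carefully, is the passage from marginal to \emph{joint} equality in law, which rests squarely on the independence of the family $\{(V_j,\varepsilon_j,L_j)\}_j$ built into the construction. No further analytic estimate is needed.
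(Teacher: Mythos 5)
Your proof is correct and follows the same route the paper intends: Lemma \ref{bpr} gives the marginal identity coordinate by coordinate, the independence built into the construction (see \eqref{dec0}) upgrades this to equality of joint laws, and the partial-sum map then yields the process-level statement; the conditioning argument for $M_n\buildrel{\mathcal D}\over=\sum_{j=1}^{B_n}L_j$ is the standard one and is carried out correctly. The paper does not print a separate proof for this proposition, so there is nothing to contrast; your write-up simply makes explicit the bookkeeping the paper leaves implicit.
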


We also will need the following local limit theorem for Bernoulli sums.
\begin{proposition} \label{lltber}Let
$\mathcal B_n=\b_1+\ldots+\b_n$, $n=1,2,\ldots$  where
$
\b_i
$ are i.i.d.  Bernoulli r.v.'s (\,${\mathbb P}\{\b_i=0\}={\mathbb P}\{\b_i=1\}=1/2$).  There exists a numerical constant $C_0$ such that for all positive $n$
 \begin{eqnarray*}  \sup_{k}\, \Big|  {\mathbb P}\big\{\mathcal B_n=k\}
 -\sqrt{\frac{2}{\pi n}} e^{-{ (2k-n)^2\over
2 n}}\Big|\ \le \
  \  \frac{C_0}{n^{3/2}}  .
  \end{eqnarray*}
\end{proposition}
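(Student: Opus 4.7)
The plan is to compare the probability mass and the Gaussian density on the Fourier side and split the resulting integral into a near-zero Taylor zone and an exponentially damped tail. Since $\b_1$ takes values in $\{0,1\}$ equally, $\E[e^{it\b_1}] = (1+e^{it})/2 = e^{it/2}\cos(t/2)$, so $\E[e^{it\mathcal B_n}] = e^{int/2}\cos^n(t/2)$. Fourier inversion on the integer lattice gives
\[
\P\{\mathcal B_n = k\} \,=\, \frac{1}{2\pi}\int_{-\pi}^{\pi} e^{it(n/2-k)}\cos^n(t/2)\,dt,
\]
while the density $g_n(x):=\sqrt{2/(\pi n)}\,e^{-(2x-n)^2/(2n)}$ admits
\[
g_n(k) \,=\, \frac{1}{2\pi}\int_{-\infty}^{\infty} e^{it(n/2-k)} e^{-nt^2/8}\,dt,
\]
by a direct Fourier-transform computation.

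Subtracting the two representations and using $|e^{it(n/2-k)}|=1$ yields the uniform bound
\[
\sup_k\bigl|\P\{\mathcal B_n=k\}-g_n(k)\bigr| \,\le\, \frac{1}{2\pi}\int_{-\pi}^{\pi}\bigl|\cos^n(t/2)-e^{-nt^2/8}\bigr|\,dt \,+\, \frac{1}{2\pi}\int_{|t|>\pi} e^{-nt^2/8}\,dt.
\]
The tail integral is $O(e^{-n\pi^2/8})$, negligible against $n^{-3/2}$. I then split the remaining integral at the threshold $|t|=n^{-1/2}\log n$.

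In the near zone $|t|\le n^{-1/2}\log n$, I expand $\log\cos(t/2) = -t^2/8 - t^4/192 + O(t^6)$ (valid for $|t|\le\pi/2$) and write
\[
\cos^n(t/2) - e^{-nt^2/8} \,=\, e^{-nt^2/8}\bigl(e^{-nt^4/192+O(nt^6)}-1\bigr).
\]
On this zone $|nt^4/192 + O(nt^6)| = O((\log n)^4/n)\to 0$, so by $|e^x-1|\le 2|x|$ for $|x|\le 1$ the difference is bounded by $C\, e^{-nt^2/8}(nt^4+nt^6)$. Substituting $u = t\sqrt n$ turns $\int e^{-nt^2/8}\,nt^4\,dt$ into $n^{-3/2}\int u^4 e^{-u^2/8}du = O(n^{-3/2})$, and the $nt^6$ contribution is $O(n^{-5/2})$. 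In the outer zone $n^{-1/2}\log n < |t|\le\pi$ I use the elementary inequality $\cos s \le 1 - (4/\pi^2)s^2$ on $[-\pi/2,\pi/2]$ to get $|\cos(t/2)|^n \le e^{-nt^2/\pi^2}$ on $[-\pi,\pi]$; both $|\cos^n(t/2)|$ and $e^{-nt^2/8}$ are then majorized by $e^{-cnt^2}\le e^{-c(\log n)^2}$ for some absolute $c>0$, which is super-polynomially small in $n$ and hence negligible.

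The main technical care needed lies in the Taylor step: verifying that $\log\cos(t/2)+t^2/8 = -t^4/192 + O(t^6)$ with a genuine uniform $O(\cdot)$ on $|t|\le\pi/2$, and then that after multiplication by $n$ and exponentiation the error on the near zone really is $O(e^{-nt^2/8}\,nt^4)$. The cleanest implementation applies $|e^x-1|\le |x|\,e^{|x|}$ with $x = n\log\cos(t/2)+nt^2/8$ and uses that $|x|=O((\log n)^4/n)\to 0$ so $e^{|x|}\le 2$ for $n$ large. Once the three contributions are combined, one obtains a universal constant $C_0$ with $\sup_k\bigl|\P\{\mathcal B_n=k\}-g_n(k)\bigr|\le C_0/n^{3/2}$, which is the claim.
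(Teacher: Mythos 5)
The paper states this Proposition without proof (it is invoked as a standard local limit theorem with rate for the symmetric Bernoulli), so there is no in‑text argument to compare against; I therefore evaluate your proof on its own. Your Fourier‑analytic argument is correct and is the standard route to this statement. The characteristic function $\E e^{it\mathcal B_n}=e^{int/2}\cos^n(t/2)$, the lattice Fourier inversion over $[-\pi,\pi]$, the full‑line inversion for the Gaussian $g_n$, and the resulting bound $\sup_k|\P\{\mathcal B_n=k\}-g_n(k)|\le\frac1{2\pi}\int_{-\pi}^\pi|\cos^n(t/2)-e^{-nt^2/8}|\,dt+\frac1{2\pi}\int_{|t|>\pi}e^{-nt^2/8}\,dt$ are all sound. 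The split at $|t|=n^{-1/2}\log n$ is appropriate: on the inner zone, $\log\cos(t/2)=-t^2/8-t^4/192+O(t^6)$ gives $\cos^n(t/2)-e^{-nt^2/8}=e^{-nt^2/8}(e^{-nt^4/192+O(nt^6)}-1)$; since the exponent is $O((\log n)^4/n)\to 0$, the bound $|e^x-1|\le|x|e^{|x|}$ yields a majorant $Ce^{-nt^2/8}(nt^4+nt^6)$, whose integral is $O(n^{-3/2})$ after substituting $u=t\sqrt n$. On the outer zone the elementary concavity estimate $\cos s\le 1-(4/\pi^2)s^2$ on $[-\pi/2,\pi/2]$ gives $\cos^n(t/2)\le e^{-nt^2/\pi^2}$, so both $\cos^n(t/2)$ and $e^{-nt^2/8}$ are $\le e^{-c(\log n)^2}$, super‑polynomially small. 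Absorbing finitely many small $n$ into the constant $C_0$ handles the claim for all positive $n$. One might add explicitly that the reason the rate is $n^{-3/2}$ rather than the generic $n^{-1}$ (in the normalization of the Proposition) is the vanishing of the third cumulant of the symmetric Bernoulli, reflected in the absence of an $nt^3$ term in your exponent expansion — but this is implicit in the computation and not a gap.
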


 \subsection{A concentration inequality for sums of independent random variables.}\label{sub2.2}
We also need  the following Lemma  (\cite{di}, Theorem 2.3).
\begin{lemma} \label{di.1}
Let $X_1, \dots, X_n$     be independent random variables, with $0 \le X_k \le 1$ for each $k$.
Let $S_n = \sum_{k=1}^n X_k$ and $\mu = \E S_n$. Then for any $\epsilon >0$,
 \begin{eqnarray*}
{\rm (a)}\quad  &&
 \P\big\{S_n \ge  (1+\epsilon)\mu\big\}
    \le  e^{- \frac{\epsilon^2\mu}{2(1+ \epsilon/3) } } ,
    \cr {\rm (b)} \quad & &\P\big\{S_n \le  (1-\epsilon)\mu\big\}\le    e^{- \frac{\epsilon^2\mu}{2}}.
 \end{eqnarray*}
\end{lemma}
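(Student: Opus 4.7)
The plan is to apply the classical Cramér--Chernoff exponential moment method and optimize a parameter, the only ingredient beyond independence and boundedness being the convexity of the exponential function on $[0,1]$.

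First I would set up the upper tail. For any $t>0$, by Markov's inequality,
\begin{equation*}
\P\{S_n\ge (1+\epsilon)\mu\}\le e^{-t(1+\epsilon)\mu}\prod_{k=1}^n\E[e^{tX_k}].
\end{equation*}
Since $0\le X_k\le 1$, convexity of $x\mapsto e^{tx}$ gives $e^{tX_k}\le 1+X_k(e^t-1)$, hence
\begin{equation*}
\E[e^{tX_k}]\le 1+\mu_k(e^t-1)\le \exp\!\bigl(\mu_k(e^t-1)\bigr),\qquad \mu_k=\E X_k.
\end{equation*}
Summing and exponentiating yields $\prod_k\E[e^{tX_k}]\le e^{\mu(e^t-1)}$, so
\begin{equation*}
\P\{S_n\ge (1+\epsilon)\mu\}\le \exp\!\bigl(\mu(e^t-1)-t(1+\epsilon)\mu\bigr).
\end{equation*}
The optimal choice $t=\log(1+\epsilon)$ (found by differentiation) gives the bound $\exp\!\bigl(-\mu\,\psi(\epsilon)\bigr)$ with $\psi(\epsilon)=(1+\epsilon)\log(1+\epsilon)-\epsilon$.

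For the lower tail, the symmetric argument applied to $-S_n$, using $\E[e^{-tX_k}]\le \exp(\mu_k(e^{-t}-1))$ for $t>0$, yields
\begin{equation*}
\P\{S_n\le (1-\epsilon)\mu\}\le \exp\!\bigl(\mu(e^{-t}-1)+t(1-\epsilon)\mu\bigr),
\end{equation*}
minimized at $t=-\log(1-\epsilon)$, giving $\exp\!\bigl(-\mu\,\varphi(\epsilon)\bigr)$ with $\varphi(\epsilon)=(1-\epsilon)\log(1-\epsilon)+\epsilon$.

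The remaining step, and the only piece of real work, is to establish the two elementary inequalities
\begin{equation*}
\psi(\epsilon)\ge \frac{\epsilon^2}{2(1+\epsilon/3)}\qquad\text{and}\qquad \varphi(\epsilon)\ge \frac{\epsilon^2}{2},
\end{equation*}
valid for $\epsilon>0$ (respectively $0<\epsilon<1$). Both follow by checking that the differences vanish at $\epsilon=0$ together with their first derivatives and that the second derivatives are non-negative; for $\varphi$ this is immediate since $\varphi''(\epsilon)=1/(1-\epsilon)\ge 1$, while for $\psi$ one verifies that $h(\epsilon)=\psi(\epsilon)(1+\epsilon/3)-\epsilon^2/2$ satisfies $h(0)=h'(0)=h''(0)=0$ and $h'''(\epsilon)\ge 0$. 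I expect this last verification, rather than the exponential tilt, to be the technically fussiest point; everything upstream is the textbook Chernoff computation, and the constants $1/2$ and $1/(2(1+\epsilon/3))$ are chosen precisely so that these analytic inequalities go through.
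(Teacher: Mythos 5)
Your proof is correct: the Chernoff/Cram\'er exponential-moment argument with $e^{tX_k}\le 1+X_k(e^t-1)$ for $X_k\in[0,1]$, the optimization $t=\log(1+\epsilon)$ (resp.\ $t=-\log(1-\epsilon)$), and the elementary comparisons $\psi(\epsilon)\ge \epsilon^2/(2(1+\epsilon/3))$ and $\varphi(\epsilon)\ge\epsilon^2/2$ all check out, and this is indeed the standard route. Note that the paper does not prove this lemma at all --- it is cited verbatim as \cite{di}, Theorem~2.3 (McDiarmid's survey on concentration), and your argument is essentially a reconstruction of the proof in that reference.

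One tiny point to tidy if this were to be written up: for part~(b) the derivation via $\varphi(\epsilon)=(1-\epsilon)\log(1-\epsilon)+\epsilon$ only applies when $0<\epsilon<1$, as you flag; for $\epsilon\ge 1$ the event $\{S_n\le(1-\epsilon)\mu\}$ reduces to $\{S_n\le 0\}=\{S_n=0\}$ (or is empty when $\epsilon>1$ and $\mu>0$), and letting $t\to\infty$ in the same bound gives $\P\{S_n=0\}\le e^{-\mu}\le e^{-\mu/2}\le e^{-\epsilon^2\mu/2}$ only when $\epsilon\le 1$; actually $e^{-\epsilon^2\mu/2}$ with $\epsilon>1$ is smaller still, so one should use the trivial observation that the probability is $0$ when $(1-\epsilon)\mu<0$ and handle $\epsilon=1$ by $e^{-\mu}\le e^{-\mu/2}$. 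It is a one-line remark but worth including.
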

\subsection{An estimate for quadratic forms}\label{sub2.3.qf}
 We refer to Weber  \cite{W1a}, inequality (2.19).
  \begin{lemma} \label{qfb}   For any system of complex numbers $\{x_i\}$ and  $ \{\a_{i,j}\}$,
\ben\label{qfba} \Big| \sum_{1\le i<j\le n}  x_ix_j\a_{i,j}\Big|  &\le&\frac1{2}
\sum_{i=1}^n |x_i|^2 \Big(\sum_{i<\ell\le n } |\a_{i,\ell}
|+\sum_{1\le  \ell <i }  |\a_{ \ell ,i} |\Big)  .
\een  
 Also, \ben\label{qfbb}   \Big|  \sum_{ 1\le i,j\le n\atop i\not= j}  x_ix_j\a_{i,j} \Big|\le \frac1{2}  \sum_{ i=1}^n |x_i|^2\Big(  \sum_{\ell
=1\atop
\ell\not = i }^n (|\a_{i,\ell} | +|\a_{
\ell ,i} |) \Big)  .
\een 
In particular, if $\a_{i,j}=\a_{j,i}$, then 
\ben\label{qfbaa} \Big| \sum_{1\le i,j\le n\atop i\not= j}  x_ix_j\a_{i,j}\Big|  &\le& 
\sum_{ i=1}^n |x_i|^2\Big(  \sum_{\ell
=1\atop
\ell\not = i }^n |\a_{i,\ell} |   \Big)  .
\een  
   \end{lemma}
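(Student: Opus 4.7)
The plan is to prove all three inequalities by the same elementary device: bound the modulus of each bilinear term by the AM--GM inequality $|x_i x_j| \le \tfrac12(|x_i|^2+|x_j|^2)$, then relabel the two resulting sums so that each has the form $\sum_i |x_i|^2 (\cdots)$.

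For inequality \eqref{qfba}, I would first write
\[
\Big|\sum_{1\le i<j\le n} x_i x_j \alpha_{i,j}\Big| \,\le\, \sum_{1\le i<j\le n}|x_i||x_j|\,|\alpha_{i,j}| \,\le\, \tfrac12\sum_{1\le i<j\le n}(|x_i|^2+|x_j|^2)\,|\alpha_{i,j}|.
\]
The right-hand side splits into two pieces. In the piece carrying $|x_i|^2$, the index $i$ is the smaller one, so the inner sum runs over $i<\ell\le n$ with weights $|\alpha_{i,\ell}|$. In the piece carrying $|x_j|^2$, relabel $j\mapsto i$: then $i$ becomes the larger index, so the inner sum runs over $1\le \ell<i$ with weights $|\alpha_{\ell,i}|$. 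Combining the two yields exactly the bound stated in \eqref{qfba}.

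For inequality \eqref{qfbb}, the argument is identical but now with unordered pairs: decompose $\sum_{i\ne j}=\sum_{i<j}+\sum_{i>j}$, apply AM--GM to each term, and collect by the index carrying $|x_i|^2$. In the sum over $i<j$ the inner weights on $|x_i|^2$ are $|\alpha_{i,\ell}|$ with $\ell>i$ and on $|x_j|^2$ (after relabeling) they are $|\alpha_{\ell,i}|$ with $\ell<i$; a symmetric book-keeping holds for $i>j$. Summing produces $\sum_{\ell\ne i}(|\alpha_{i,\ell}|+|\alpha_{\ell,i}|)$ as the weight of $|x_i|^2$, giving \eqref{qfbb}. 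Finally, \eqref{qfbaa} follows at once from \eqref{qfbb} upon substituting $|\alpha_{\ell,i}|=|\alpha_{i,\ell}|$, which collapses the two inner sums into a single copy and cancels the factor $1/2$.

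There is no real obstacle here: the lemma is a routine symmetrization identity once AM--GM is applied termwise. The only point demanding a little care is the relabeling of the two halves of the sum after AM--GM, since one must correctly track which of the two indices $(i,\ell)$ is the first and which is the second in the original labeling of $\alpha_{\cdot,\cdot}$; this is what produces the pair $|\alpha_{i,\ell}|, |\alpha_{\ell,i}|$ in the final bounds rather than a single term. A symmetric hypothesis $\alpha_{i,j}=\alpha_{j,i}$ immediately erases that asymmetry, which is why \eqref{qfbaa} holds without the factor $1/2$.
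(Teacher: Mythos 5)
Your proof is correct and uses exactly the same argument as the paper: apply the termwise bound $|x_ix_j|\le\tfrac12(|x_i|^2+|x_j|^2)$, split into the two pieces carrying $|x_i|^2$ and $|x_j|^2$, relabel, and for \eqref{qfbb} repeat on the $i>j$ half before specializing to the symmetric case for \eqref{qfbaa}. No difference worth noting.
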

 \begin{proof} We have
\ben\Big| \sum_{1\le i<j\le n}  x_ix_j\a_{i,j}\Big|&\le &  \sum_{1\le i<j\le n}  \big(\frac{|x_i|^2+|x_j|^2}{2} \big)|\a_{i,j} |
\cr &\le&\frac1{2}
\sum_{i=1}^n |x_i|^2 \Big(\sum_{i<\ell\le n } |\a_{i,\ell}
|+\sum_{1\le  \ell <i }  |\a_{ \ell ,i} |\Big)  .
\een Operating
similarly for the sum $\sum_{1\le j<i\le n}  x_ix_j\a_{i,j}$ gives
the  result.
  \end{proof}
 
 \vskip 3pt
 \subsection{Background on quasi-orthogonal systems}\label{sub2.3.qos}
We recall some classical facts on the notion of quasi-orthogonality in Hilbert spaces, of much relevance in the present work, and which are taken from our book \cite{W},   p.\,22. Let $(H,\|\cdot\|)$ be a real or complex Hilbert space, and let $\{f_n,   n\ge 1\}$
be orthonormal vectors  in
the inner product space $H$.   The fact that
 $
{1\over n}\sum_{i=1}^n f_i\, \to \, 0$  in $H$  follows for instance from Ky Fan's result. Recall  {Bessel's
inequality}: {\it If $\{e_i, 1\le i\le n\}$ are orthogonal vectors in the
inner product space $H$, then}
$$
\sum_{i=1}^n |\langle x,e_i\rangle |^2\le \|x\|^2\qq \textit{for any $x\in H$.} $$
There are many generalizations of this  inequality. 
In relation with Bessel's inequality,   
Bellman  introduced the
notion of a  {quasi-orthogonal system}  (see Kac, Salem and Zygmund \cite{KSZ}). 
A sequence  $  \underline{f}=\{f_n, n\ge 1\}$   in $H$  is called   a
quasi-orthogonal system  if the quadratic form   on $\ell_2$
defined by
 $\{x_h, h\ge 1\}\mapsto \|\sum_{h }  x_h   f_h\|^2 $  is
bounded. A necessary and sufficient condition for $\underline{f}$ to be
quasi-orthogonal  is that the series   $\sum c_nf_n$ converges in
$H$, for any sequence $\{c_n, n\ge 1\} $ such that  $\sum c_n^2
<\infty$.  

  As  observed in \cite{KSZ},    \lq\lq \textit{every theorem on
orthogonal systems  whose proof depends only on Bessel's inequality,
holds for quasi-orthogonal systems}\rq\rq.

  In particular for $H= L^2(  X,\mathcal A, \mu)$,  $(  X,\mathcal A, \mu)$ a probability space,
  Rademacher--Menchov's Theorem \ref{Ra.Me} applies,  and so   the series $\sum c_nf_n $
  converges almost everywhere, provided
that $\sum c_n^2 \log^2 n<\infty$. This is easily seen
from the fact that    $\underline{f}$ is quasi-orthogonal if and only if there
exists a constant $L$ depending on $\underline{f}$ only, such that
 \begin{equation}\label{quasi.o.caracterisation} \big\|\sum_{i\le n}
y_if_i\big\| \le L\, \big(\sum_{i\le n}|y_i|^2 \big)^{1/2}.
\end{equation}   There is a useful sufficient condition for quasi-orthogonality: {\it In order for $\underline{f}$ to be quasi-orthogonal, it is sufficient that}
  \begin{equation}\label{quasi.o.criterion}
  \sup_{i\ge 1}\Big(\sum_{  \ell\ge 1  } |\langle f_i, f_\ell \rangle  | \Big)  \,<\infty.
\end{equation}
This is  Lemma 7.4.4 in  Weber \cite{W.}, among other sources.   
  Indeed,    \eqref{qfbaa} in Lemma \ref{qfb} implies
\beq\Big| \sum_{1\le i,j\le n }  x_ix_j\a_{i,j}\Big|   \le  
\sum_{i=1}^n |x_i|^2 \Big(\sum_{1\le \ell\le n } |\a_{i,\ell}
| \Big)  .
\eeq 
Thereby,
\beq \label{bound.quasi.o}
\big\|\sum_{i\le n}
y_if_i\big\|^2 \le    
\sum_{i=1}^n |y_i|^2 \Big(\sum_{1\le \ell\le n } |\langle f_i, f_\ell \rangle  | \Big)
\le    
  \Big(\sup_{i\ge 1} \sum_{  \ell \ge 1 } |\langle f_i, f_\ell \rangle  | \Big)\cdot \sum_{i=1}^n |y_i|^2.\eeq 
Thus if
\eqref{quasi.o.criterion} is fulfilled,   \eqref{quasi.o.caracterisation} readily follows from the previous calculation.
 \subsection{Rademacher--Menshov's theorem}\label{sub2.3.}
  This   well-known result states as follows.  
\begin{theorem}\label{Ra.Me}
{\rm  a)} Let $\{a_k, k\ge 1\}$ be a  sequence of reals   such that
       $$\sum_{k\ge 1} | a_k  |^2_{2 } \log^2 k <
\infty.
$$
Then  for any  orthonormal sequence $\{\xi_k, k\ge 1\}$
in $L^2(\P)$,  $(\Omega, {\mathcal{ B}}, \P)$  a probability space, the
series $\sum_{k\ge 1}a_k\xi_k$ converges $\P$-almost surely.
\smallskip

{\rm b)} If $\{ w(n), n\ge 1\}$ is an arbitrary  positive monotone
increasing sequence of numbers with $w(n)=o(\log n)$, then there
exists an everywhere divergent orthonormal series $\sum_{k\ge
1}a_k\psi_k$ whose coefficients satisfy the condition
$$\sum_{k\ge 1}c_k^2w(k)^2=\infty.  $$
\end{theorem}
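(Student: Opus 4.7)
The plan is to handle part (a) via the celebrated \emph{Rademacher--Menshov maximal inequality} and part (b) via Menshov's explicit construction. For part (a), write $S_n = \sum_{k=1}^n a_k \xi_k$ and aim to establish the dyadic maximal bound
\[
\Big\| \max_{1 \le m \le 2^L} |S_m - S_0| \Big\|_2 \le (L+1) \Big( \sum_{k=1}^{2^L} |a_k|^2 \Big)^{1/2}.
\]
I would prove this by induction on $L$ using a ``tournament'' (dyadic binary-tree) decomposition: every index $m \in [1, 2^L]$ admits a unique decomposition into at most $L$ disjoint dyadic half-open subintervals of $[1,2^L]$ whose concatenation equals $[1,m]$. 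Bounding $|S_m|$ by a sum of $L$ block sums and applying Cauchy--Schwarz in the outer sum produces a factor $L$; the remaining squared block sums are controlled, after taking max over $m$ inside each dyadic block, by the $L^2$-norms of the blocks themselves through orthonormality of $\{\xi_k\}$. Iterating the inductive estimate yields the stated inequality.

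Once this is established, apply it to the dyadic block $[2^j, 2^{j+1})$ with coefficients $a_{2^j+1}, \ldots, a_{2^{j+1}}$ to obtain
\[
\E \Big( \max_{2^j \le m < 2^{j+1}} |S_m - S_{2^j}|^2 \Big) \le (j+1)^2 \sum_{k=2^j+1}^{2^{j+1}} |a_k|^2.
\]
Summing over $j \ge 0$, the right-hand side is bounded (up to a constant) by $\sum_{k \ge 1} |a_k|^2 \log^2 k < \infty$ by hypothesis. Hence $\sum_j \max_{2^j \le m < 2^{j+1}} |S_m - S_{2^j}|^2 < \infty$ almost surely by the monotone convergence theorem, so in particular $\max_{2^j \le m < 2^{j+1}} |S_m - S_{2^j}| \to 0$ a.s. Meanwhile the subsequence $\{S_{2^j}\}$ is Cauchy in $L^2$ (their pairwise differences have squared norms equal to partial sums of $\sum a_k^2 < \infty$), so one can extract a.s.~convergence along this subsequence via a further Borel--Cantelli argument using orthogonality. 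Combining the subsequential convergence with the uniform smallness of fluctuations inside dyadic blocks gives a.s.~convergence of the full series.

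For part (b), I would invoke Menshov's explicit construction. Given $w(n) = o(\log n)$, pick $N_r \uparrow \infty$ rapidly so that $w(N_r)/\log N_r$ decays fast enough. On the unit interval, Menshov's blocks $\psi_k$ are built from suitably normalized characteristic-function combinations supported on carefully arranged small dyadic subintervals so that, on a full-measure set, the partial sums $\sum_{k \le N} c_k \psi_k$ exhibit oscillations of order $\log N / w(N) \to \infty$ along a subsequence. The coefficients $c_k$ are chosen so that on each block one gains a factor $\log N_r$ in the supremum of partial sums while only paying $w(N_r)^2$ in the weighted $\ell^2$-norm; the gap $\log N_r / w(N_r) \to \infty$ is precisely what forces divergence. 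The orthonormality of $\{\psi_k\}$ is arranged block-by-block via Gram--Schmidt within each level.

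The main obstacle is the induction step in part (a): the dyadic tournament argument must be set up so that the $L$-fold Cauchy--Schwarz loss is genuinely optimal and interacts correctly with the orthogonality of the tails, giving the clean $(L+1)$ factor rather than something weaker. Part (b) is combinatorially delicate but follows a well-established template once the block parameters $N_r$, $c_k$, and supports are calibrated against the prescribed $w$; the quantitative miracle is that the Rademacher--Menshov maximal inequality of part (a) is sharp exactly at the $\log^2 k$ threshold, which is what $w(n) = o(\log n)$ exploits.
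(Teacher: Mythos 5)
The paper does not prove this statement at all: it is quoted as the classical Rademacher--Menshov theorem and dispatched with references to Alexits, Kashin--Saakyan and Olevskii, so there is no in-paper argument to compare yours against. Your outline for part~(a) is the standard textbook route (dyadic ``tournament'' decomposition giving the maximal inequality $\E\max_{m\le 2^L}|S_m|^2\le (L+1)^2\sum_{k\le 2^L}a_k^2$, then summation over dyadic blocks), and it is correct in structure.

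One step, however, is stated in a way that would fail as written. You claim that the dyadic subsequence $\{S_{2^j}\}$ converges a.s.\ because it is ``Cauchy in $L^2$ (their pairwise differences have squared norms equal to partial sums of $\sum a_k^2<\infty$)'' plus ``a further Borel--Cantelli argument using orthogonality.'' Being Cauchy in $L^2$ does not yield almost sure convergence, and $\sum_k a_k^2<\infty$ alone is not enough here (otherwise one would be proving a.s.\ convergence of every orthogonal series with square-summable coefficients, which part~(b) shows is false). The correct argument uses the weighted hypothesis a second time: with $b_j^2=\sum_{k=2^j+1}^{2^{j+1}}a_k^2$ one has $\sum_j (j+1)^2 b_j^2<\infty$, hence by Cauchy--Schwarz $\sum_j \E|S_{2^{j+1}}-S_{2^j}|\le\sum_j b_j\le\bigl(\sum_j (j+1)^2b_j^2\bigr)^{1/2}\bigl(\sum_j(j+1)^{-2}\bigr)^{1/2}<\infty$, so $\sum_j|S_{2^{j+1}}-S_{2^j}|<\infty$ a.s.\ and the subsequence converges. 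With that repair, part~(a) is complete modulo carrying out the inductive maximal inequality. Part~(b) as you present it is an appeal to Menshov's construction rather than a proof; that is acceptable given that the paper itself only cites the literature for both halves, but be aware you have not actually constructed the divergent series.
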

We refer to   Alexits  \cite{A}, (see p.\,80), Kashin and Saakyan \cite{KS}, Olevskii \cite{O} concerning orthogonal systems in Fourier analysis, and  Weber \cite{W}, \cite{W.}. Less known, and less applied, are the refinements of this result, obtained notably by Tandori, see \cite{KS}, and those obtained using the majorizing measure method, which are nevertheless of a self-evident practical interest in the study of limit theorems in Probability Theory. There are many   different proofs available. See also, more specifically Weber \cite{W4} and Ch.\,7 in   \cite{W.}, and references therein.
\subsection{A useful almost sure convergence criterion.}\label{sub2.3}For the proof of  the ASLLT (Theorem \ref{t1[asllt].}), the following almost sure convergence criterion  is of particular relevance.
This one has the advantage, comparatively to other known criteria, to allow  one to control   convergence of   series of random variables inherent to questions of this kind.

 \begin{theorem}  \label{8.2} Let $\xi=\{\xi_l, l\ge 1\}$ be a sequence of real random variables with partial sums  $S_n=\sum_{l=1}^n \xi_l$, $n\ge 1$. Assume that    the following assumption is satisfied: 
 \vskip 2 pt For some $\g >1$, 
 \begin{equation}\label{8.12} \E \big|\sum_{i\le l\le j}\xi_l\big|^\g \le  \Phi(\sum_{  l\le j}
u_l)\Psi(\sum_{i\le l\le j} u_l)  \qq (\forall 1\le
i\le j<\infty),
\end{equation} where $\Phi ,  \Psi:\R^+\rightarrow\R^+$ are non-decreasing,   $\Psi(x)/x^{1+\eta}$ is
non-decreasing for some $\eta\ge 0$ and $u=  \{u_i,
i\ge 1\}$ is a sequence of non negative reals such
that  $U_j=\sum_{l=1}^j u_l\uparrow \infty$.   Further assume that  for some real $M>1$,   the series   
\begin{equation}\label{little.s}s^\g=s_{M}^\g =\sum_{l\ge 1\atop [M^l,M^{l+1}[\cap \mathcal U\neq\emptyset} \frac{\Psi(M^{ l  })
\,\Phi(M^{l })\,L(l)^{\e(\eta)}}{  M^{ \g l  }} 
\end{equation} converges, where   $\mathcal U= \{U_j, j\ge 1\}$, 
$I(l)=I_{M}(l)= [M^l,M^{l+1}[$, 
$L(l)=L_M(l)= 1+ \log \big( \sharp\{[M^l,M^{l+1}[\cap \mathcal U\}\big)$, 
 $l\ge 1$, and $\e(\eta)=0$ or $1$, according to  $\eta>0$ or  $\eta=0$. 
 \vskip 2pt Put,
\begin{equation}\label{big.s}{\mathcal S}^\g={\mathcal S}_{M}^\g=\sum_{k\ge 1\atop [M^k,M^{k+1}[\cap \mathcal U\neq\emptyset}
 \, \sup_{ 
M^k\le U_j<M^{k+1} }\Big({|S_j|\over U_j}\Big)^\g.
\end{equation}   Then,
  \begin{equation}\label{8.13} \|{\mathcal S}_M\|_\g \le  M  K_\g \,s ,\qq \hbox{and    in  particular }\quad\P \big\{\lim_{j\to\infty}\ {S_j\over U_j}= 0\big\}=1,
\end{equation}
where $K_\g $ is a constant depending on $\g$ only.
\vskip 3 pt 
{Let $1<M_1\le M$. Then
\beq \|{\mathcal  S}_{M_1}\|_\g^\g\,\le \, 2^{\g}\,\|{\mathcal  S}_{M}\|_\g^\g.
\eeq}
\end{theorem}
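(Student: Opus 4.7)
I would prove the theorem by the classical Rademacher--Menshov blocking scheme, transported from the dyadic scale on $\mathbb{N}$ to the $M$-adic scale carried by the increasing sequence $\mathcal{U}=\{U_j\}$. For each level $l$ with $I(l)\cap\mathcal{U}\neq\emptyset$ I would set $j_l^{-}=\min\{j:U_j\in I(l)\}$ and $N_l=\#(I(l)\cap\mathcal{U})$, so that $L(l)=1+\log N_l$. For $j$ with $U_j\in I(l)$, the decomposition $S_j=S_{j_l^{-}}+(S_j-S_{j_l^{-}})$ together with the lower bound $U_j\ge M^l$ yields
\[
\sup_{U_j\in I(l)}\frac{|S_j|}{U_j}\,\le\,M^{-l}\Bigl(|S_{j_l^{-}}|+\sup_{U_j\in I(l)}\bigl|S_j-S_{j_l^{-}}\bigr|\Bigr).
\]
The anchor $S_{j_l^{-}}$ is handled directly by \eqref{8.12} with $i=1,\,j=j_l^{-}$, using the monotonicity of $\Phi,\Psi$ and the structural consequence $\Psi(Mx)\le M^{1+\eta}\Psi(x)$ of $\Psi(x)/x^{1+\eta}$ being non-decreasing, to produce $\mathbb{E}|S_{j_l^{-}}|^{\gamma}\le C_M\Phi(M^{l})\Psi(M^{l})$.

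The central step is the within-block oscillation $\sup_{U_j\in I(l)}|S_j-S_{j_l^{-}}|$. Listing the indices of $I(l)\cap\mathcal U$ as $j_l^{-}=p_0<p_1<\cdots<p_{N_l-1}$, I would run the standard Menshov binary-tree chaining: dyadically decompose $\{0,\ldots,N_l-1\}$ into $\lceil\log_2 N_l\rceil$ scales and bound each binary-interval increment via \eqref{8.12} as $\mathbb{E}|S_{p_b}-S_{p_a}|^{\gamma}\le \Phi(M^{l+1})\Psi(U_{p_b}-U_{p_a})$. Summing the $L^{\gamma}$-norms along each scale by Minkowski, using additivity of $U$-increments at a given scale together with $\Psi(x/2^{r})\le 2^{-r(1+\eta)}\Psi(x)$, produces a per-scale contribution geometric in $r$: when $\eta>0$ the ratio $2^{-r\eta/\gamma}$ is strictly less than $1$, so summation across the $\lceil\log_2 N_l\rceil$ scales is bounded independently of $N_l$ and $\varepsilon(\eta)=0$; when $\eta=0$ no such decay is available and the scales contribute equally, which is precisely the origin of the factor $L(l)^{\varepsilon(0)}$. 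The net within-block estimate is
\[
\mathbb{E}\sup_{U_j\in I(l)}\bigl|S_j-S_{j_l^{-}}\bigr|^{\gamma}\,\le\,C_{M,\gamma}\,\Phi(M^{l})\Psi(M^{l})\,L(l)^{\varepsilon(\eta)}.
\]

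Combining the anchor and block-maximum bounds via $(a+b)^{\gamma}\le 2^{\gamma-1}(a^{\gamma}+b^{\gamma})$, dividing by $M^{\gamma l}$, and summing over $l$ would yield $\mathbb{E}\,\mathcal{S}_{M}^{\gamma}\le C_{M,\gamma}\,s^{\gamma}$, whence $\|\mathcal{S}_{M}\|_{\gamma}\le MK_{\gamma}\,s$. Since $\mathcal{S}_{M}<\infty$ almost surely, each level's block-supremum tends to $0$, giving $S_j/U_j\to 0$ a.s. For the comparison assertion with $1<M_1\le M$, the inequality $M_1\le M$ forces every $I_{M_1}(k_1)$ to have length strictly less than that of any $I_M$-interval containing a point of it, so $I_{M_1}(k_1)$ meets at most two consecutive $I_M$-intervals; bounding its internal supremum by the sum of the suprema over these (at most two) $I_M$-intervals and raising to the $\gamma$-th power delivers $\|\mathcal{S}_{M_1}\|_{\gamma}^{\gamma}\le 2^{\gamma}\|\mathcal{S}_{M}\|_{\gamma}^{\gamma}$.

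\textbf{Main obstacle.} The delicate point is the within-block Menshov chaining in the borderline case $\eta=0$: the scale-by-scale Minkowski summation must be bookkept so that exactly the right power of $L(l)$ appears, matching the summability criterion \eqref{little.s}; the clean tracking of how concavity of $x\mapsto x^{1/\gamma}$ interacts with the $\log_2 N_l$ dyadic levels is what makes the bound sharp and practically usable for the ASLLT applications of Section~\ref{s4}.
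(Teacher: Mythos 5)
Your proposal reproduces the paper's overall architecture (anchor-plus-oscillation decomposition of each $M$-adic $\mathcal U$-block, chaining for the oscillation, recombination, and the same two-interval comparison for $M_1\le M$), and that part of the outline is fine. There is, however, a genuine gap in the within-block oscillation step. You run Menshov's binary-tree chaining in \emph{index space}, i.e.\ you dyadically split $\{0,\dots,N_l-1\}$ and then estimate each scale-$r$ contribution by invoking $\Psi(x/2^{r})\le 2^{-r(1+\eta)}\Psi(x)$, implicitly pretending that the $2^r$ scale-$r$ blocks carry $U$-increments all of order $M^{l+1}/2^{r}$. Nothing in the hypotheses forces the $U_j$ to be evenly spread inside $[M^{l},M^{l+1}[$: one scale-$r$ block can absorb essentially the whole $U$-span, in which case the only available bound is $\sum_{\text{int at scale }r}\Psi(U_{p_b}-U_{p_a})\le\Psi(M^{l+1})$ via superadditivity of $\Psi$ (which holds because $\Psi(x)/x^{1+\eta}$ is non-decreasing with $\eta\ge 0$). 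Thus the per-scale $L^{\gamma}$-contribution is $(\Phi\Psi)^{1/\gamma}$ with \emph{no} $2^{-r\eta/\gamma}$ decay, each of the $\lceil\log_2 N_l\rceil$ scales contributes equally regardless of $\eta$, and your argument cannot produce the improvement $\varepsilon(\eta)=0$ when $\eta>0$. The paper sidesteps exactly this by chaining in normalized $U$-space: it sets $t_j=U_j/M^{\kappa_p+1}\in[0,1]$, deduces $\|S_j-S_i\|_{\gamma}\le(\Phi\Psi)^{1/\gamma}(t_j-t_i)^{(1+\eta)/\gamma}$, and invokes the metric-entropy supremum bound (Lemma~\ref{metrical.bounds}), whose covering numbers automatically adapt to any uneven distribution of the $t_j$; that is what makes the case $\beta\gamma=1+\eta>1$ genuinely yield a bounded supremum.

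Two smaller points. First, your final displayed within-block bound $\mathbb{E}\sup|S_j-S_{j_l^-}|^{\gamma}\le C\Phi\Psi L(l)^{\varepsilon(\eta)}$ is stated at the moment level, whereas the chaining produces a bound at the $L^{\gamma}$-\emph{norm} level, $\|\sup|S_j-S_{j_l^-}|\|_{\gamma}\le C L(l)^{\varepsilon(\eta)}(\Phi\Psi)^{1/\gamma}$; raising to the $\gamma$-th power gives $L(l)^{\gamma\varepsilon(\eta)}$, not $L(l)^{\varepsilon(\eta)}$, so the exponent on $L(l)$ is off by $\gamma$ (for what it is worth, the passage from \eqref{appl.metrical.bounds} to \eqref{oscillation.gen.seq} in the paper exhibits the same $\gamma$-power discrepancy, so this looks like a shared typo that should be resolved in the statement of \eqref{little.s}). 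Second, in the $M_1\le M$ comparison your justification ``forces every $I_{M_1}(k_1)$ to have length strictly less than that of any $I_M$-interval containing a point of it'' is false (take $M_1=M$, or $M_1=3,\ M=4$); the correct and sufficient observation is simply that $M_1^{k+1}=M_1\cdot M_1^{k}<M\cdot M^{l+1}=M^{l+2}$, so $I_{M_1}(k)\subset[M^{l},M^{l+2}[$, which meets at most two consecutive $I_M$-intervals, exactly as in the paper.
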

This is a slight variant of  the criterion given in Weber \cite[Th.\,8.2]{W1} generalizing G\'al-Koksma's, in which the reals $u_i$ are assumed to be integers.

\vskip 3 pt Several remarks are in order.


 \begin{remark}
  The above Theorem provides with \eqref{8.13}, fine estimates of  ${|S_j|/   U_j}$.   Functions $\Phi, \Psi $  which are present in assumption \eqref{8.12},   appear   in the course of the proof where increment's assumptions are  combined with Lemma \ref{metrical.bounds} to control intermediate local maxima, but not    in \eqref{big.s}. 
  However assumption \eqref{little.s}, which is used to achieve the proof, completely relies  on these functions, and appear in the first part of \eqref{8.13}.

  \end{remark}

 \begin{remark}
    It is quite interesting to observe that the bound obtained {\it tightly depend} on  the way the sequence $\mathcal U$ is asymptotically  distributed, which is reflected by \eqref{little.s}. Indeed, only those $k$ such that $[M^k,M^{k+1}[\cap \mathcal U\neq\emptyset$ have to be  counted. This seems to have been overlooked, in particular by the author.  
This remark also applies to similar  criteria.  \end{remark}

\begin{remark}
An analog bound can   be derived from the proof for partially observed sequences $\{S_n, n\in \mathcal N\}$, $\mathcal N$ a growing sequence of integers.
\end{remark}

\begin{remark}[bounded case] \label{U.bounded}When $\mathcal U=  \{U_i,
i\ge 1\}$ is a bounded sequence, assumption \eqref{8.12} takes the simpler form: for some 
$\g >1$, 
\begin{equation}\label{8.12.U.bounded} \E \big|\sum_{i\le l\le j}\xi_l\big|^\g \le  C\Psi(\sum_{i\le l\le j} u_l)  \qq (\forall 1\le
i\le j<\infty).
\end{equation}
This readily implies that  $S_n=\sum_{l=1}^n \xi_l$, $n\ge 1$ is a Cauchy sequence in $L^\g(\P)$,  thus converging to some element $S\in L^\g(\P)$.
Further, assuming a little more than the convergence of the series $\sum_{l=1}^\infty m_l$, the series $\sum_{l=1}^\infty \xi_l$ also converges almost surely. More precisely we have the implication,
\begin{equation}\label{8.12.U.bounded.as.cv}
\sum_{l=1}^\infty u_l(\log l)^\g<\infty\  \Longrightarrow \  \hbox{the series
$\sum_{l=1}^\infty \xi_l$ converges almost surely.}
\end{equation} 
The case $\g =2$ contains Rademacher-Menshov's Theorem. See Remark 8.3.5 and (8.3.27) in Weber \cite{W}, and  Chapter 8 for detailed study  of these questions. See also Remark \ref{opt.midiv} where this is applied.

\end{remark}For the sake of completeness, we give a detailed proof of Theorem \ref{8.2}. \begin{proof}  
 Let
$\kappa=\kappa(M)= \{\kappa_p(M),p\ge 1\}$ be the sequence defined by $\kappa_p=\kappa_p(M)=
k$, if $I_k$ is the $p$-th interval  such that $I_k\cap \mathcal{ U}
\not= \emptyset$. Let $\mathcal{ L}_p=\mathcal{ L}_p(M)$ be the set of indices defined
by $L\in \mathcal{ L}_p\Leftrightarrow U_L\in I_{\kappa_p} $. Pick
arbitrarily some index in $\mathcal{ L}_p$, which we write $ L^*_p=L^*_p(M) $.
\vskip 2 pt
On the one hand
\beq\label{generic.seq}
\sum_p{\|S_{L^*_p}\|_\g^\g\over  M^{ \g(\kappa_p+1)  }  }  \le
\sum_p {\Psi(M^{ \kappa_p+1  }) \Phi(M^{ \kappa_p+1  })\over   M^{
\g(\kappa_p+1)  }  }\,.
 \eeq
  \vskip 2 pt
 And on the other,  for $i,j\in \mathcal{ L}_p$, $i\le j$, 
 \begin{equation}\label{generic.seq.}\E  {\big| S_j-S_i\big|^\g \over
\Phi(M^{ \kappa_{p} +1  }) }  \le  \frac{\Psi(U_j-U_i)}{(U_j-U_i)^{1+\eta}}(U_j-U_i)^{1+\eta}\,\le \,\frac{\Psi(M^{ \kappa_{p} +1  })}{(M^{ \kappa_{p} +1  })^{1+\eta}}\,\big(U_j-U_i\big)^{1+\eta}
.
 \end{equation}
 Thus
 \begin{eqnarray}\label{generic.seq..}\E   \big| S_j-S_i\big|^\g  & \le & \Phi(M^{ \kappa_{p} +1  })\Psi(M^{ \kappa_{p} +1  })\,\Big(\frac{U_j  }{M^{ \kappa_{p} +1}  }-\frac{ U_i}{M^{ \kappa_{p} +1 } } \Big)^{1+\eta} .
 \end{eqnarray}
 
We use the following Lemma based on metric entropy  chaining.
 \begin{lemma}[\cite{W1}, Lemma 3.4]\label{metrical.bounds}
Let $\g>1$, $0<\beta\le 1$ and consider a finite
collection of random variables $ \bigl( X_1,\dots , X_N \bigr)
\subset L^\g(\P)$,  and reals $0\le t_1\le t_2\le \dots \le t_N \le
1$ such that
 \begin{equation}\label{metrical.bounds1}
\|X_j-X_i\|_\g  \le (t_j -t_i)^{\beta }  \qq (\forall 1\le i\le j\le
N)
.
 \end{equation} 

Then, there exists a constant $K_{\beta,\g}$
depending on $\beta,\g$ only, such that
 \begin{equation}\label{metrical.bounds2}
\bigl\|  {\sup_{1\le i,j\le N}   |X_i -X_j |}  \bigr\|_\g
\,\le  \,
\begin{cases}
K_{\beta,\g}   &\text{if $\beta\g>1$,}
\\
K_{\beta,\g} \log  N&\text{if $\beta\g= 1$,}
\\
   K_{\beta,\g} N^{{1\over \g}-\beta}&\text{if $\beta\g<1$.}
\end{cases}
 \end{equation}
\end{lemma}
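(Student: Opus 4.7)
I would prove this by the classical dyadic chaining technique applied to the finite family $(X_i)_{1 \le i \le N}$ parametrised by $t_1 \le \dots \le t_N \in [0,1]$. For each integer $0 \le k \le K := \lceil \log_2 N \rceil$, partition $[0,1]$ into $2^k$ equal dyadic subintervals and select, for each $i \in \{1,\ldots,N\}$, a representative $\pi_k(i) \in \{1,\ldots,N\}$ lying in the same level-$k$ subinterval as $t_i$, with the convention $\pi_K(i) = i$. Then at each intermediate level $k<K$ the map $i \mapsto \pi_k(i)$ takes at most $2^k$ distinct values, the pair $(\pi_k(i),\pi_{k-1}(i))$ takes at most $2^k$ distinct values (because $\pi_{k-1}(i)$ is determined by the parent level-$(k-1)$ subinterval of $\pi_k(i)$), and $|t_{\pi_k(i)} - t_{\pi_{k-1}(i)}| \le 2 \cdot 2^{-k}$; at the top level $k = K$ the inequality $N \le 2^K$ preserves the same count.

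The usual telescoping
\begin{equation*}
X_i - X_{\pi_0(i)} \,=\, \sum_{k=1}^K \bigl(X_{\pi_k(i)} - X_{\pi_{k-1}(i)}\bigr)
\end{equation*}
then yields
\begin{equation*}
\sup_{1 \le i,j \le N} |X_i - X_j| \,\le\, 2\sum_{k=1}^K \sup_i \bigl|X_{\pi_k(i)} - X_{\pi_{k-1}(i)}\bigr| \,+\, \sup_{i,j}\bigl|X_{\pi_0(i)} - X_{\pi_0(j)}\bigr|.
\end{equation*}
The last term is $O(1)$ in $L^\g$ since $\|X_i - X_j\|_\g \le 1$ whenever $t_i, t_j \in [0,1]$. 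At each level $k$, I would then apply the elementary $L^\g$ union bound $\|\max_{1 \le m \le M} |Y_m|\|_\g \le M^{1/\g} \max_m \|Y_m\|_\g$ to the at most $2^k$ distinct increments $X_{\pi_k(i)} - X_{\pi_{k-1}(i)}$, each of $L^\g$-norm at most $(2 \cdot 2^{-k})^\b$ by hypothesis, obtaining
\begin{equation*}
\Bigl\|\sup_i \bigl|X_{\pi_k(i)} - X_{\pi_{k-1}(i)}\bigr|\Bigr\|_\g \,\le\, 2^\b \cdot 2^{k(1/\g - \b)}.
\end{equation*}

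The conclusion then follows by summing this geometric series with common ratio $2^{1/\g - \b}$ from $k=1$ to $k=K$, and invoking Minkowski's inequality in $L^\g$. The three regimes of the statement correspond precisely to the sign of the exponent $1/\g - \b$: when $\b\g > 1$ the series converges to a constant independent of $N$; when $\b\g = 1$ each of the $K \asymp \log N$ terms is $O(1)$, giving a $\log N$ bound; and when $\b\g < 1$ the sum is dominated by its last term, of order $2^{K(1/\g - \b)} \asymp N^{1/\g - \b}$. The only genuinely delicate point in the argument is the combinatorial bookkeeping of distinct pairs $(\pi_k(i), \pi_{k-1}(i))$ at each chaining level: bounding this count by $2^k$ rather than by $N$ is what produces the factor $2^{k/\g}$ in the union bound and, in turn, the three sharp regimes of the conclusion; everything else is a routine triangle inequality in $L^\g$.
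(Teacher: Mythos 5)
Your dyadic chaining argument is correct, and it is precisely the "metric entropy chaining" method the paper attributes to its cited source (\cite{W1}, Lemma 3.4); the paper states the lemma without reproving it, so there is no in-text proof to diverge from. One cosmetic point: at level $0$ there is a single dyadic cell $[0,1]$, so $\pi_0$ is constant and the term $\sup_{i,j}|X_{\pi_0(i)}-X_{\pi_0(j)}|$ is identically zero (you bound it by $O(1)$, which is harmless but unnecessary); apart from that, the representative count $\min(2^k,N)$, the $2\cdot 2^{-k}$ diameter bound, the crude $L^\gamma$ union bound $\|\max_{m\le M}|Y_m|\|_\gamma\le M^{1/\gamma}\max_m\|Y_m\|_\gamma$, and the three regimes of the geometric series with ratio $2^{1/\gamma-\beta}$ are all correct and give exactly the claimed constants up to $K_{\beta,\gamma}$.
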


We deduce from \eqref{generic.seq..} and Lemma \ref{metrical.bounds},
\begin{equation}\label{appl.metrical.bounds}
\big\|  \sup_{i,j\in \mathcal{ L}_p}|S_i -S_j |  \big\|_\g  \le
\begin{cases}
 K_{\eta,\g} \Phi(M^{ \kappa_{p} +1  })^{1/\g}\Psi(M^{ \kappa_p+1  })^{1/\g}
&\text{if $\eta>0$,}
\\
K_{\eta,\g}\Phi(M^{ \kappa_{p} +1  })^{1/\g}\Psi(M^{ \kappa_p+1  })^{1/\g} \log \sharp(\mathcal{ L}_p)
&\text{if $\eta=0$,}
\end{cases} \end{equation}
where $K_{\eta,\g}$ depend on $ \eta,\g $ only.
 \vskip 3 pt Assume that $\eta=0$. Then
\beq\label{oscillation.gen.seq}
\sum_p{\big\|\sup_{i,j\in \mathcal{ L}_p}|S_i-S_j|\big\|_\g^\g\over
M^{ \g(\kappa_p+1)  }} \le K_{ \g} \sum_p \frac{
\Phi(M^{\kappa_p+1  }) \Psi(M^{\kappa_p+1}) (1+\log \sharp(\mathcal{ L}_p))}{  M^{ \g(\kappa_p+1)  }}  
.
\eeq
 From \eqref{generic.seq} and \eqref{oscillation.gen.seq}     and  the elementary inequality $(a+b)^y\le 2^{y-1}(a^y+b^y)$,   $a\ge 0,b\ge 0$ and $y\ge 1$ (which is a plain application of H\"older's inequality),  we deduce that,  
 \beq\label{oscillation.gen.seq.}
\sum_p{\big\|\sup_{j\in \mathcal{ L}_p}| S_j|\big\|_\g^\g\over
M^{ \g(\kappa_p+1)  }} \le K_{ \g} \sum_p \frac{ \Phi(M^{
\kappa_p+1  }) \Psi(M^{\kappa_p+1}) (1+\log \sharp(\mathcal{ L}_p))}{  M^{ \g(\kappa_p+1)  }} .
\eeq

Observing for $L\in \mathcal{ L}_p$ that $ M^{\kappa_p+1}\le M U_L$,   we obtain,  in view of the definition of $\mathcal{ L}_p$, the following bound,
\begin{eqnarray*}\|{\mathcal  S}_{M}\|_\g^\g &=&
 \sum_{k\ge 1\atop [M^k,M^{k+1}[\cap \mathcal U\neq\emptyset}\Big\|\, \sup_{ 
M^k\le U_j<M^{k+1} }\Big({|S_j|\over U_j}\Big)\Big\|_\g^\g
\cr &= &\sum_{p}\Big\|\, \sup_{ j\in \mathcal{ L}_p }\Big({|S_j|\over U_j}\Big)\Big\|_\g^\g\,\le\,M\,\sum_p{\big\|\sup_{j\in \mathcal{ L}_p}| S_j|\big\|_\g^\g\over
M^{ \g(\kappa_p+1)  }} 
\cr &\le &M\,K_{ \g} \sum_p{ \Phi(M^{
\kappa_p+1  }) \Psi(M^{\kappa_p+1}) }(1+\log \sharp(\mathcal{ L}_p))/ {  M^{ \g(\kappa_p+1)  }}\cr &\le &
  M  K_{ \g} \,s^\g.\end{eqnarray*}
 Further
  ${\sup_{j\in
\mathcal{ L}_p}  |S_j|/ M^{\kappa_p} }$ tends to $0$ almost surely, as
$p$ tends  to infinity. And so \eqref{8.13} follows. The case $\eta>0$ is identical.
\vskip 3 pt
 Let $1<M_1\le M$. For any integer $k\ge 1$, there exists an integer $l\ge 1$ such that   $[M_1^{k},M_1^{k+1}[\subset [M^l,   M^{l+2}[$. Moreover if $[M_1^k,M_1^{k+1}[\cap \mathcal U\neq\emptyset$, then   $[M^l, M^{l+1}[\cap \mathcal U\neq\emptyset$ and/or  $[M^{l+1}, M^{l+2}[\cap \mathcal U\neq\emptyset$.
 
  From the following inequality,
  \ben\Big\|\, \sup_{ 
M_1^{k}\le U_j<M_1^{k+1} }\Big({|S_j|\over U_j}\Big)\Big\|_\g^\g&
\le & \bigg\{\Big\|\, \sup_{ 
M^l\le U_j<M^{l+1} }\Big({|S_j|\over U_j}\Big)\Big\|_\g\, + \Big\|\, \sup_{ 
M^{l+1}\le U_j<M^{l+2} }\Big({|S_j|\over U_j}\Big)\Big\|_\g\bigg\}^\g\
\cr &
\le &2^{\g-1} \bigg\{\Big\|\, \sup_{ 
M^l\le U_j<M^{l+1} }\Big({|S_j|\over U_j}\Big)\Big\|_\g^\g  + \Big\|\, \sup_{ 
M^{l+1}\le U_j<M^{l+2} }\Big({|S_j|\over U_j}\Big)\Big\|_\g^\g\bigg\},\een
and   definition of ${\mathcal  S}_{M}$, it follows that 
\beq \|{\mathcal  S}_{M_1}\|_\g^\g\,\le \, 2^{\g}\,\|{\mathcal  S}_{M}\|_\g^\g.
\eeq
This implies by the same argument used before that   ${\sup_{j\in
\mathcal{ L}_p}  |S_j|/ M_1^{\kappa'_p} }$ tends to $0$ almost surely, as
$p$ tends  to infinity, $\kappa'_p=\kappa_p(M_1)$ being defined with respect to the sequence $\{I_{M_1}(l), l\ge 1\}$ and $\mathcal U$. This achieves the proof. \end{proof}


\vskip 25 pt

 \section{\gsec  PROOFS  OF THEOREM \ref{t2[asllt]} AND OF COROLLARY   \ref{cort1} }\label{s3}
\vskip 5 pt 
 
Assumption \eqref{nun1} implies that condition  \eqref{basber.j}  is realized.
 Let $0<\t_j\le\t_{X_j}$.  By Proposition \ref{lmd}, we
may
 associate to $ \{X_k, k\ge 1\}$ a
sequence $\{(V_j,\e_j, L_j), j\ge 1\}$ of independent copies of  $(V ,\e , L )$  such that
 $$ \{V_j+\e_jD L_j, j\ge 1\}\buildrel{\mathcal D}\over{ =}\{X_j, j\ge 1\}  . $$
Further $\{(V_j,\e_j), j\ge 1\}
 $   and $\{L_j, j\ge 1\}$ are independent sequences.
 And $\{L_j, j\ge 1\}$ is  a sequence  of independent Bernoulli random variables. Set
\begin{equation}\label{dec1} S_n =\sum_{j=1}^n X_j, \qq  W_n =\sum_{j=1}^n V_j,\qq M_n=\sum_{j=1}^n \e_jL_j,  \qq B_n=\sum_{j=1}^n \e_j .
\end{equation}
 We   notice that $M_n$ is   a sum of exactly $B_n$ Bernoulli random variables.
  We have the
representation
$$ \{S_n, n\ge 1\}\buildrel{\mathcal D}\over{ =}  \{ W_n  + DM_n, n\ge 1\} .$$
And  $M_n\buildrel{\mathcal D}\over{ =}\sum_{j=1}^{B_n } L_j$. 
    
      We denote by $\E_{(V,\e)}$, ${\mathbb P}_{(V,\e)}$ (resp.\ $\E_{\!L}$, ${\mathbb P}_{\!L}$) the expectation and probability symbols
relatively to the
$\s$-algebra generated by the sequence   $\{(V_j,\e_j), j=1, \ldots, n\}$ (resp.\ $\{L_j , j=1, \ldots, n\}$). These algebra are independent.
\bigskip \par Put \begin{equation}\label{Y}Y_n= \s_n \big({\bf 1}_{\{S_n=\kappa_n\}}-{\mathbb P}\{S_n=\kappa_n\} \big)  .
\end{equation}
Let $n>m\ge 1$.   We have a simple expression for the correlation 
\begin{equation}\label{basic.0}  \s_m\s_n\Big({\mathbb P} \{S_m =\kappa_m\,,\,S_{n } =\kappa_n\}-{\mathbb P}\{ S_{ m} = \kappa_m\}  {\mathbb P} \{S_n
=\kappa_n
\}\Big)= {\mathbb E\,} Y_nY_m, 
\end{equation} 
which we can     more conveniently rewrite as follows,
\begin{equation}\label{basic}{\mathbb E\,} Y_nY_m=\s_m{\mathbb P} \{S_m =\kappa_m\}\,\s_n\Big({\mathbb P}\{S_n-S_m =\kappa_n -\kappa_m\}- {\mathbb P} \{S_n
=\kappa_n
\}\Big). 
\end{equation}
  
 Since  by \eqref{nun.ka}-(2), 
 \begin{equation*}   (\s_j^2-\s_i^2)^{1/2}  \,{\mathbb P}\{S_j-S_i=\kappa_j-\kappa_i\}  ={\mathcal O}(1),
 \end{equation*}
we have the bound 
\begin{align*}     &   \s_m {\mathbb P}\{S_m=\kappa_m\}   
\ \s_n \Big|{\mathbb P}\{S_n-S_m=\kappa_{n}-\kappa_m \}- {\mathbb P}\{S_n=\kappa_n\}\Big| \\
&\le  C \, \Big(\frac {\s_n}{(\s_n^2-\s_m^2)^{1/2} }\Big)  (\s_n^2-\s_m^2)^{1/2} \,{\mathbb P}\{S_n-S_m=\kappa_{n}-\kappa_m \}+\s_n {\mathbb P}\{S_n=\kappa_n\}\Big)
\le  C \Big(\frac {\s_n}{(\s_n^2-\s_m^2)^{1/2} }+1\Big).
\end{align*}

Whence for $n>m\ge 1$,
\begin{align}\label{basic.corr.bd}  |{\mathbb E\,} Y_nY_m|     \le  C\, \Big(\frac {\s_n}{(\s_n^2-\s_m^2)^{1/2}}+1\Big).
\end{align}



We therefore have to estimate $|A|$ where 
 \begin{eqnarray}\label{llt.iid.appl.}
A &:=& \s_n\Big({\mathbb P}\{ S_{n }-S_{ m} =\kappa_n -\kappa_m\}- {\mathbb P} \{S_n =\kappa_n \}\Big)
\cr
& =& \s_n{\mathbb E\,} \big({\bf 1}_{\{B_n\le \theta\nu_n \}}+{\bf 1}_{\{B_n>\theta\nu_n\}}\big)\Big({\bf 1}_{\{ S_{n }-S_{ m} =\kappa_n -\kappa_m\}}-
{\bf 1}_{
\{S_n =\kappa_n \}}\Big).
\end{eqnarray}
Further,
 \begin{equation}\label{basic0}{\mathbb E\,} Y_n^2=\s_n^2 \,{\mathbb P}\{S_n=\kappa_n\}\big(1-{\mathbb P}\{S_n=\kappa_n\} \big) ={\mathcal O}(\s_n).
\end{equation}
 \vskip 5pt
 Let $0<\e<1$ and set $\rho=e^{- \frac{\epsilon^2 }{2}}$, $\theta=1-\e$. By Lemma \ref{di.1},
 \begin{equation}\label{nun1.}\P\{B_n\le \theta\nu_n\big\}\le \rho^{ \nu_n}\ ,\   \P\{B_n-B_m\le \theta(\nu_n-\nu_m)\big\}\le   \rho^{ \nu_n-\nu_m}
,\end{equation}
for all $n\ge m\ge 1$. 

\vskip 3 pt  We can write in view of Proposition \ref{lmd} 
\begin{align}\label{dep} &\s_n\,{\mathbb E\,} 
 {\bf 1}_{\{B_n>\theta\nu_n\}} \Big({\bf 1}_{\{ S_{n }-S_{ m} =\kappa_n
-\kappa_m\}}- {\bf 1}_{
\{S_n =\kappa_n \}}\Big)\cr&=  \s_n\,\E_{(V,\e)} {\bf 1}_{\{B_n>\theta\nu_n\}} \Big({\mathbb P}_{\!L}
\Big\{D  \sum_{j=m+1}^n \e_jL_j =\kappa_n-\kappa_m-(W_{n  }-W_m)\Big\}
\cr &  \  -  {\mathbb P}_{\!L}
\big\{D \sum_{j= 1}^n \e_jL_j  =\kappa_n-W_n
\big\}\Big) .
\end{align}  

To treat the independent case, it is necessary to introduce the sets 
$$A(n,m)= \{ {\color{black}\e_j}=0,\ m<j\le n\}.$$ On $A(n,m)$ we have $ \sum_{j=m+1}^n \e_jL_j =0$. Thus
$$\Big\{D  \sum_{j=m+1}^n \e_jL_j =\kappa_n-\kappa_m-(W_{n  }-W_m)\Big\}=\Big\{W_{n
}-W_m =\kappa_n-\kappa_m \Big\}.$$
So that (\ref{dep}) may be continued with
\begin{eqnarray}\label{dep1}  &= & \s_n \Big\{\E_{(V,\e)} {\bf 1}_{\{(B_n>\theta\nu_n)\cap A(n,m)\}}\big({\bf 1}_{\{  W_{n  }-W_m=\kappa_n-\kappa_m   \} }
\cr & &\    - {\mathbb P}_{\!L}
\big\{D \sum_{j= 1}^n \e_jL_j  =\kappa_n-W_n
\big\}\big) \Big\}
\cr
&  &+\s_n\Big\{\E_{(V,\e)} {\bf 1}_{\{B_n>\theta\nu_n)\cap A(n,m)^c\}} \Big[{\mathbb P}_{\!L}
\Big\{D  \sum_{j=m+1}^n \e_jL_j =\kappa_n-\kappa_m-
\cr & &\ (W_{n  }-W_m)\Big\}
   -  {\mathbb P}_{\!L}
\big\{D \sum_{j= 1}^n \e_jL_j  =\kappa_n-W_n
\big\}\Big] \Big\}
\cr &:=& A'+ A''.
\end{eqnarray}

   We have the easy bound 
\begin{equation}\label{e1}|A'| \le \s_n  {\mathbb P}
\big\{ A(n,m)\}  =\s_n\, {\color{black} \prod_{j=m+1}^n\vartheta_j  }
. \end{equation}
Concerning  $A''$, we note that 
$$\sum_{j= 1}^n \e_jL_j\buildrel{\mathcal D}\over{ =}\sum_{j=1}^{B_n } L_j,\qq \sum_{j=m+1}^n \e_jL_j
 \buildrel{\mathcal D}\over{ =}\sum_{j=B_m+1}^{B_n } L_j.$$
   By applying Proposition \ref{lltber}   we obtain,
 \begin{eqnarray}\label{lltber1} \sup_{z}\, \Big|\sqrt N\, {\mathbb P}\big\{\sum_{j=1}^{N } L_j=z\} \big\} -{2\over \sqrt{2\pi}}e^{-{(z-(N/2))^2\over (N/2)}}\Big| =o\Big({1\over N}\Big).
 \end{eqnarray} 
 It follows that 
$$ \bigg|   {\mathbb P}_{\!L} \Big\{D\sum_{j=1}^{B_n } L_j =\kappa_n-W_n \Big\} -{2e^{-{(\kappa_n-W_n-(B_n/2))^2\over D^2(B_n/2)}}\over
\sqrt{2\pi B_n}} \bigg| =o\Big({1\over B_n^{3/2}}\Big). $$
Further since on the set $A(n,m)^c$, {\color{black} $\e_j =1$ for some $m<j\le n$},  we have   $ B_n>B_m $, 
 then
\begin{align*} & \bigg|   \, {\mathbb P}_{\!L} \Big\{D\sum_{j=1}^{B_{n }-B_{ m} } L_j =\kappa_n-\kappa_m-(W_{n  }-W_m) \Big\}
 -{2e^{-{(\kappa_n-\kappa_m-(W_{n  }-W_m)-(B_{n }-B_{ m})/2 )^2\over D^2(B_{n }-B_{ m})/2 }}\over
\sqrt{2\pi (B_{n }-B_{ m})} } \bigg|
\cr& = o\Big({1\over
(B_{n }-B_{ m})^{3/2}}\Big). 
\end{align*}
 Therefore,
 \begin{eqnarray}\label{e2} |A''| &\le & \s_n\, \bigg|\E_{(V,\e)}{\bf 1}_{\{(B_n>\theta\nu_n)\cap A(n,m)^c\}}
\Big\{{2e^{-{(\kappa_n-W_n-(B_n/2))^2\over D^2(B_n/2)}}\over
\sqrt{2\pi B_n}}
\cr
& &\quad- {2e^{-{(\kappa_n-\kappa_m-(W_{n  }-W_m)-(B_{n }-B_{ m})/2 )^2\over D^2(B_{n }-B_{ m})/2 }}\over
\sqrt{2\pi (B_{n }-B_{ m})} }\Big\}\bigg|
\cr
& &\quad+C_0\,\s_n\, \E_{(V,\e)}{\bf 1}_{\{B_n>\theta\nu_n, B_n>B_m\}}\Big\{ {1\over B_{n }^{3/2}}+ \Big({1\over
(B_{n }-B_{ m})^{3/2}}\Big)\Big\}\cr
&:=& A''_1+C_0A''_2,
\end{eqnarray}
 where the constant $C_0$ comes from 
  \eqref{lltber1}. The second term is easily estimated. Indeed,
\begin{eqnarray}\label{e6}A''_2&= &\s_n \,\E_{(V,\e)}{\bf 1}_{\{B_n>\theta\nu_n, B_n>B_m\}}
 \Big( {1\over B_{n }^{3/2}}+{1\over (B_{n }-B_{ m})^{3/2}}\Big)
\cr &\le&
 2\s_n\, {\mathbb P}\{B_n-B_m\le \theta(\nu_n-\nu_m)\}\cr
& & + \s_n\, \E_{(V,\e)}{\bf 1}_{\{ {\color{black}B_n>\theta\nu_n} ,\, B_n-B_m>  \theta(\nu_n-\nu_m)\}} \Big( {1\over {\color{black}B_n^{3/2}}}+{1\over (B_{n }-B_{ m})^{3/2}}\Big)
\cr &\le&
 C\, \s_n \Big\{ \rho^{ \nu_n-\nu_m }  +  {1\over (\theta\nu_n)^{3/2}}+ {1\over
(\theta(\nu_n-\nu_m))^{3/2}}\Big\}.
\end{eqnarray}

We now   estimate $A''_1$, which we    bound   as follows:
 \begin{eqnarray}\label{e3} A_1''&=&\s_n\, \bigg|\E_{(V,\e)}{\bf 1}_{\{(B_n>\theta\nu_n)\cap A(n,m)^c\}}
\Big\{{2e^{-{(\kappa_n-W_n-(B_n/2))^2\over D^2(B_n/2)}}\over
\sqrt{2\pi B_n}}
\cr & & \qq\qq\qq\qq- {2e^{-{(\kappa_n-\kappa_m-(W_{n  }-W_m)-(B_{n }-B_{ m})/2 )^2\over D^2(B_{n }-B_{ m})/2 }}\over
\sqrt{2\pi (B_{n }-B_{ m})} }\Big\}\bigg|
\cr &
  \le   & C  \ \E_{(V,\e)}{\bf 1}_{\{(B_n>\theta\nu_n)\cap A(n,m)^c\}} \Big\{\Big({\s_n \over \sqrt{B_n}}\Big) \Big[
  \sqrt{  {B_n} \over
   B_{n }-B_{ m}   }-1 \Big]
\cr & & \ \qq\qq   \times e^{-{(\kappa_n-\kappa_m-(W_{n  }-W_m)-(B_{n }-B_{ m})/2 )^2\over D^2(B_{n }-B_{ m})/2 }}\Big\}
\cr &  &\ + C  \ \E_{(V,\e)}{\bf 1}_{\{(B_n>\theta\nu_n)\cap A(n,m)^c\}} \Big({\s_n \over \sqrt{B_n}}\Big) 
\cr & & \qq\qq  \times \Big|   e^{-{(\kappa_n-W_n-(B_n/2))^2\over
D^2(B_n/2)}}
-
   e^{-{(\kappa_n-\kappa_m-(W_{n  }-W_m)-(B_{n }-B_{ m})/2 )^2\over D^2(B_{n }-B_{ m})/2 }}\Big|
 \cr &\le &  C_\theta \Big({\s_n \over\sqrt{\nu_n}}\Big)\,\Bigg\{  \ \E_{(V,\e)}{\bf 1}_{\{B_n>\theta\nu_n, B_n>B_m\}}   \Big[
  \sqrt{  {B_n} \over
   B_{n }-B_{ m}   }-1 \Big]
 \cr & & \ +   \ \E_{(V,\e)}{\bf 1}_{\{B_n>\theta\nu_n, B_n>B_m\}}  
 \cr & &\qq \qq \times \Big|   e^{-{(\kappa_n-W_n-(B_n/2))^2\over
D^2(B_n/2)}}
  -
   e^{-{(\kappa_n-\kappa_m-(W_{n  }-W_m)-(B_{n }-B_{ m})/2 )^2\over D^2(B_{n }-B_{ m})/2 }}\Big| \ \Bigg\}
\cr&:=&\, C_\theta\Big({\s_n \over \sqrt{\nu_n}}\Big)\,\big\{A''_{11}+A''_{12}  \big\}.
\end{eqnarray}

 As $\nu_n\uparrow \infty$ with $n$, it follows from Kolmogorov's law of the iterated logarithm that
\begin{equation}\label{thetaj.LIL}
 \limsup_{n\to \infty} \frac{B_n -\nu_n}{\sqrt{\nu_n\log \log \nu_n}} \buildrel{{\rm a.s.}}\over{=} 1.
 \end{equation}
 Thus  
 $$ B_*= \sup_{n\ge 1}\frac{ B_n }{\nu_n}<\infty,$$
almost surely. Moreover $\|B_*\|_p<\infty$ for $1<p<\infty$.
  
Consider $A''_{11}$.  
  Using the inequality $\sqrt x-\sqrt y\le \sqrt{x-y}$ if $x\ge y\ge 0$, we have
 on the set $\{ B_n>B_m,B_n-B_m>\theta(\nu_n-\nu_m)\}$,
 \begin{eqnarray*}   \sqrt{  {B_n} \over
   B_{n }-B_{ m}   }-1 
   &=& 
    {\sqrt{B_n} -\sqrt{B_{n }-B_{ m}}\over \sqrt{B_{n }-B_{ m}}}
\,\le\, {\sqrt{B_m}\over \sqrt{B_{n }-B_{ m}}}\,\le\, B_*^{1/2}\,{\sqrt{ \nu_m}\over \sqrt{\theta(\nu_n-\nu_m)}}
\cr &=&B_*^{1/2}\,{1\over \sqrt \theta \sqrt{\frac{\nu_n}{\nu_m}-1}}\,\le \,B_*^{1/2}\, {1\over \sqrt \theta (\sqrt{{\nu_n\over \nu_m}}-1) }.   \end{eqnarray*}
Thus
 \begin{align}\label{e4}  A''_{11} 
 & =  \E_{(V,\e)}\Big({\bf 1}_{\big\{  \frac{B_n-B_m}{\theta(\nu_n-\nu_m)}> 1\big\}}
 \ +{\bf 1}_{\big\{  \frac{B_n-B_m}{\theta(\nu_n-\nu_m)}\le 1\big\}}\Big){\bf 1}_{\{B_n>\theta\nu_n, B_n>B_m\}} \Big[
  \sqrt{  {B_n} \over
   B_{n }-B_{ m}   }-1 \Big]
\cr &\le  \, { \E_{(V,\e)} B_*^{1/2}\over \sqrt \theta (\sqrt{{\nu_n\over \nu_m}}-1) }+\E_{(V,\e)} {\bf 1}_{\big\{  \frac{B_n-B_m}{\theta(\nu_n-\nu_m)}\le 1\big\}}
 {\bf 1}_{\{  B_n>B_m\}} \Big[{\sqrt{B_m}\over \sqrt{B_{n }-B_{ m}}}\Big]
\cr &\le   { \E_{(V,\e)} B_*^{1/2}\over \sqrt \theta (\sqrt{{\nu_n\over \nu_m}}-1) }+\sqrt{\nu_m} \,\E_{(V,\e)}B_*^{1/2} {\bf 1}_{\big\{  \frac{B_n-B_m}{\theta(\nu_n-\nu_m)}\le 1\big\}}.
\end{align}
By H\"older's inequality, for $\a>1$,
 \begin{eqnarray*}\E_{(V,\e)}B_*^{1/2} {\bf 1}_{\big\{  \frac{B_n-B_m}{\theta(\nu_n-\nu_m)}\le 1\big\}}&\le&\big( \E_{(V,\e)}(B_*)^{\a/2}\big)^{1/\a}\P^{1-1/\a}\Big\{  \frac{B_n-B_m}{\theta(\nu_n-\nu_m)}\le 1\Big\}
 \cr &\le&\big( \E_{(V,\e)}(B_*)^{\a/2}\big)^{1/\a}\rho^{(1-1/\a)(\nu_n-\nu_m)}.
  \end{eqnarray*}
Therefore
\begin{align}\label{Asecond.11}A''_{11}  &\le \,   { \E_{(V,\e)} B_*^{1/2}\over \sqrt \theta (\sqrt{{\nu_n\over \nu_m}}-1) }+\|B_*^{1/2}\|_\a\,\sqrt{\nu_m} \,\rho^{(1-1/\a)(\nu_n-\nu_m)}.\end{align}

  \vskip 5 pt

 We now turn to $A''_{12}$. Put 
 $$\kappa_n'=\kappa_n-W_n-(DB_n/2).$$
Then
\begin{eqnarray*}A''_{12}
&=& \E_{(V,\e)}{\bf 1}_{\{B_n>\theta\nu_n,  B_n>B_m\}} \Big|   e^{-{(\kappa_n-W_n-(B_n/2))^2\over
D^2(B_n/2)}}
\cr & &  \qq\qq\qq\qq\qq\qq-
   e^{-{(\kappa_n-\kappa_m-(W_{n  }-W_m)-(B_{n }-B_{ m})/2 )^2\over D^2(B_{n }-B_{ m})/2 }}\Big|
\cr
 &=&\E_{(V,\e)}{\bf 1}_{\{B_n>\theta\nu_n,  B_n>B_m\}} \Big|   e^{-{{\kappa'_n}^2\over
D^2(B_n/2)}}
  -
   e^{-{(\kappa'_n-\kappa'_m )^2\over D^2(B_{n }-B_{ m})/2 }}\Big| .
\end{eqnarray*}
At first,
  \begin{equation}\label{e5}  \E_{(V,\e)}{\bf 1}_{\{B_n>\theta\nu_n,  0<B_n-B_m\le \theta (\nu_n-\nu_m)\}} \Big|   e^{-{{\kappa'_n} ^2\over
D^2(B_n/2)}}
  -
   e^{-{(\kappa'_n-\kappa'_m )^2\over D^2(B_{n }-B_{ m})/2 }}\Big|
    \,\le \,2\rho^{ \nu_n-\nu_m}.
  \end{equation}
Concerning the integration over  $\{B_n>\theta\nu_n,   B_n-B_m> \theta (\nu_n-\nu_m)\}$, 
 we   get by letting
 $$b_n ={ \kappa_n' \over
 \sqrt B_n }     ,$$ and using the inequality $|e^{-u }-e^{-v }|\le
|u-v|$    for   reals $u\ge 0,v\ge 0$,  
  \begin{eqnarray}
 & &{   D^2\over 2}\, \Big|   e^{-{{\kappa'_n}^2\over
D^2(B_n/2)}}
  -
   e^{-{(\kappa'_n-\kappa'_m )^2\over D^2(B_{n}-B_{m})/2 }}\Big|
\cr & \le &  \Big|
-{(\kappa'_n-\kappa'_m )^2\over  (B_{n }-B_{ m})  }
 + {{\kappa'_n}^2\over
 B_n } \Big|= \Big|  -{ (\sqrt{
B_{n}}b_n-
\sqrt{ B_{m}}b_m)^2  \over B_{n}-B_{m} } + b_n^2 \Big|\cr
& = &  \Big|   {  -B_{n}b_n^2-B_{m}b_m^2+2\sqrt{B_{n}B_{m}}b_nb_m  +B_{n}b_n^2-B_{m}b_n^2\over B_{n}-B_{m} }  \Big|\cr
  & = &
 \Big|   { -(b_n-b_m)^2+
2b_mb_n(\sqrt{B_{n}\over B_{m}}-1)\over {B_{n}\over B_{m}}-1 }  \Big|
  \cr &\le &    { 2(b_n^2+b_m^2)+ 2|b_m||b_n|(\sqrt{B_{n}\over B_{m}}-1)\over {B_{n}\over B_{m}}-1 }
           . \end{eqnarray}

Hence
\begin{eqnarray}  & &\E_{(V,\e)}{\bf 1}_{\{B_n>\theta\nu_n,   B_n-B_m> \theta (\nu_n-\nu_m)\}} \Big|   e^{-{{\kappa'_n}^2\over
D^2(B_n/2)}}
  -
   e^{-{(\kappa'_n-\kappa'_m )^2\over D^2(B_{n }-B_{ m})/2 }}\Big|
\cr & \le &C\, \E_{(V,\e)}{\bf 1}_{\{B_n>\theta\nu_n,   B_n-B_m> \theta (\nu_n-\nu_m)\}} \bigg\{ {  b_n^2+b_m^2  \over {B_{n}\over B_{m}}-1 }    +{
 |b_m||b_n|  \over \sqrt{B_{n}\over B_{m}}-1  }    \bigg\}.
 \end{eqnarray}

One next establishes the following three estimates:
 \begin{eqnarray}\label{three.est} \E_{(V,\e)} {\bf 1}_{\{B_n>\theta\nu_n,   B_n-B_m> \theta (\nu_n-\nu_m)\}} { |b_n||b_m| \over
\sqrt{B_{n}\over B_{m}}-1   } &\le &   \Big(\frac{\s_n\s_m}{\sqrt{\nu_n\nu_m}}\Big)\, {   C_\theta\over
 \sqrt{\nu_n\over \nu_m}-1   } 
  \cr \E_{(V,\e)} {\bf 1}_{\{B_n>\theta\nu_n,   B_n-B_m> \theta (\nu_n-\nu_m)\}} {   |b_n| ^2\over
 {B_{n}\over B_{m}}-1   } &\le&  \Big(\frac{\s_n^2}{\nu_n}\Big)   { C_\theta\over
    \sqrt {\nu_n\over \nu_m}-1   }
    \cr \E_{(V,\e)} {\bf 1}_{\{B_n>\theta\nu_n,   B_n-B_m> \theta (\nu_n-\nu_m)\}} {   |b_m| ^2\over
 {B_{n}\over B_{m}}-1   } &
\le &  \Big(\frac{\s_m^2}{\nu_m}\Big) { C_\theta\over
    \sqrt {\nu_n\over \nu_m}-1   } .
\end{eqnarray}

 Let $S'_m=W_m+(DB_m/2)$,  we note that 
  $S'_m-a_m= S'_m-\E_{(V,\e)}S'_m$, since   ${\mathbb E\,} S_m
 =
\E_{(V,\e)}S'_m=a_m $.
 Besides by \eqref{nun.ka}-(1),
  $|\kappa_j-a_j|=\mathcal O(\s_j)$; thus 
\begin{equation}\label{boundbj} |b_j|= {\big|\kappa_j-a_j -\big(W_j+(DB_j/2)-a_j\big)\big| \over
 \sqrt  {B_j} }
  \le  {C\over
 \sqrt  {B_j} }  \Big[ \s_j +\big|S'_j-\E_{(V,\e)} S'_j \big| \Big]  .
\end{equation}
  We note that
$ {     1\over
\sqrt{B_{n}\over B_{m}}-1   } \le  {    \sqrt{B_{m}}(\sqrt{B_{n} }+\sqrt{  B_{m}})\over
 \theta(\nu_n-\nu_m)   } $, on the set $ \{B_n>\theta\nu_n,   B_n-B_m> \theta (\nu_n-\nu_m)\} $.
Thus
 \begin{eqnarray*} && { |b_n||b_m| \over
\sqrt{B_{n}\over B_{m}}-1   } \,\le\,   {C\over
 \sqrt  {B_nB_m} } {
\sqrt{B_{m}}(\sqrt{B_{n} }+\sqrt{  B_{m}})\over
 \theta(\nu_n-\nu_m)   }
 \cr & &\qq\qq\qq\qq \qq \ \times \Big[ \s_n +\big|S'_n-\E_{(V,\e)} S'_n \big| \Big]   \Big[ \s_m +\big|S'_m-\E_{(V,\e)} S'_m
\big|
\Big]
\cr
&\le&     {
 2C\s_n\s_m\over
 \theta(\nu_n-\nu_m)   }
  \Big[ 1 +{\big|S'_n-\E_{(V,\e)} S'_n \big| \over \s_n}\Big]    \Big[1 +{\big|S'_m-\E_{(V,\e)}
S'_m
\big|
   \over\s_m }\Big] 
\cr
 &\le&  \Big(\frac{\s_n\s_m}{\sqrt{\nu_n\nu_m}}\Big)\,   {
2 C\sqrt{ \nu_m}(\sqrt{\nu_n }+\sqrt{ \nu_m})\over
 \theta(\nu_n-\nu_m)   }   \Big[ 1 +{\big|S'_n-\E_{(V,\e)} S'_n \big| \over \s_n}\Big]    \Big[1 +{\big|S'_m-\E_{(V,\e)}
S'_m
\big|
   \over\s_m }\Big] 
   \cr    &= &   \Big(\frac{\s_n\s_m}{\sqrt{\nu_n\nu_m}}\Big)\,    {   2C\over
 \theta\big(\sqrt{\nu_n\over \nu_m}-1  \big) } \Big[ 1 +{\big|S'_n-\E_{(V,\e)} S'_n \big| \over \s_n}\Big]    \Big[1 +{\big|S'_m-\E_{(V,\e)}
S'_m
\big|
   \over\s_m }\Big] .
\end{eqnarray*}
By the Cauchy--Schwarz inequality,
\begin{align}&\E_{(V,\e)}  {\big|S'_n-\E_{(V,\e)} S'_n \big| \over \s_n}\,  {\big|S'_m-\E_{(V,\e)}
S'_m
\big|
   \over \s_m }
   \cr  &\le \Big[\E_{(V,\e)}  {\big|S'_n-\E_{(V,\e)} S'_n \big|^2 \over  \s_n^2}\Big]^{1/2}    \Big[ \E_{(V,\e)}{\big|S'_m-\E_{(V,\e)}
S'_m
\big|^2
   \over  \s_m^2  }\Big]^{1/2}\le C .
   \end{align}
Since
$$\E_{(V,\e)}  { |b_n||b_m| \over
\sqrt{B_{n}\over B_{m}}-1   } \,\le  \,\Big(\frac{\s_n\s_m}{\sqrt{\nu_n\nu_m}}\Big)\,    {   2C\over
 \theta\big(\sqrt{\nu_n\over \nu_m}-1  \big) } \E_{(V,\e)} \Big[ 1 +{\big|S'_n-\E_{(V,\e)} S'_n \big| \over \s_n}\Big]    \Big[1 +{\big|S'_m-\E_{(V,\e)}
S'_m
\big|
   \over\s_m }\Big]
$$
we deduce
\begin{equation}  \E_{(V,\e)} {\bf 1}_{\{B_n>\theta\nu_n,   B_n-B_m> \theta (\nu_n-\nu_m)\}} { |b_n||b_m| \over
\sqrt{B_{n}\over B_{m}}-1   } \le \Big(\frac{\s_n\s_m}{\sqrt{\nu_n\nu_m}}\Big)\,    {   2C\over
 \theta\big(\sqrt{\nu_n\over \nu_m}-1  \big) } .
\end{equation}

 Now on the set $\{B_n>\theta\nu_n,   B_n-B_m> \theta (\nu_n-\nu_m)\}$,
  \begin{eqnarray*}  { |b_n| ^2\over
 {B_{n}\over B_{m}}-1   } &\le&  {C\over
   {B_n} }  \Big[ \s_n +\big|S'_n-\E_{(V,\e)} S'_n \big| \Big]^2 { B_{m}\over
  B_{n}    -B_{m}   }
 \cr  &\le&   C  { \s_n^2B_{m}\over
   {B_n}(\theta (\nu_n-\nu_m))   } \Big[ 1 +{\big|S'_n-\E_{(V,\e)} S'_n \big|\over \s_n} \Big]^2
\cr  &\le&   C  \Big(\frac{\s_n^2}{ \nu_n}\Big)  { B_{m}\over
   \theta^2 (\nu_n-\nu_m) } \Big[ 1 +{\big|S'_n-\E_{(V,\e)} S'_n \big|\over \s_n} \Big]^2
   \cr  & \le&   CB_* \Big(\frac{\s_n^2}{\nu_n}\Big)  { \nu_m\over
   \theta^2 (\nu_n-\nu_m) } \Big[ 1 +{\big|S'_n-\E_{(V,\e)} S'_n \big|\over \s_n} \Big]^2
 \cr  &=&   CB_* \Big(\frac{\s_n^2}{\nu_n}\Big)  { 1\over
   \theta^2 (\frac{\nu_n}{\nu_m}-1) } \Big[ 1 +{\big|S'_n-\E_{(V,\e)} S'_n \big|\over \s_n} \Big]^2 
   \cr  &\le&     C_{\theta,\rho}\,B_* \Big(\frac{\s_n^2}{\nu_n}\Big) {  1\over
    \sqrt {\frac{\nu_n}{\nu_m}}-1   } \Big[ 1 +{\big|S'_n-\E_{(V,\e)} S'_n \big|\over \s_n} \Big]^2 .
\end{eqnarray*}
    Therefore
\begin{eqnarray}  & &\E_{(V,\e)} {\bf 1}_{\{B_n>\theta\nu_n,   B_n-B_m> \theta (\nu_n-\nu_m)\}} {   |b_n| ^2\over
 {B_{n}\over B_{m}}-1   } 
\cr &\le& C_{\theta,\rho}\,  \Big(\frac{\s_n^2}{\nu_n}\Big) {  1\over
    \sqrt {\frac{\nu_n}{\nu_m}}-1   }  \E_{(V,\e)}  B_* \Big[ 1 +{\big|S'_n-\E_{(V,\e)} S'_n \big|\over \s_n} \Big]^2
\cr
&\le& C_{\theta,\rho}\,  \Big(\frac{\s_n^2}{\nu_n}\Big) {  1\over
    \sqrt {\frac{\nu_n}{\nu_m}}-1   }   \Big[ 1 +{1\over  \s_n^2}\E_{(V,\e)}  \big|S'_n-\E_{(V,\e)} S'_n \big|^2  \Big] \cr
&\le&  \Big(\frac{\s_n^2}{\nu_n}\Big) { C_{\theta,\rho}\over
    \sqrt {\frac{\nu_n}{\nu_m}}-1   } .
\end{eqnarray}
  Next $|b_m|  \le  {C\over
 \sqrt  {B_m} }  \Big[ \s_m +\big|S'_m-\E_{(V,\e)} S'_m \big| \Big] $, and so on the set $\{  B_n-B_m> \theta (\nu_n-\nu_m)\}$,
 \begin{eqnarray*}  { |b_m| ^2\over
 {B_{n}\over B_{m}}-1   } 
 &\le&  {C\over
   {B_m} }  \Big[\s_m +\big|S'_m-\E_{(V,\e)} S'_m \big| \Big]^2 { B_{m}\over
  B_{n}    -B_{m}   }
 \cr  &\le&   C  {  \s_m^2\over
   \theta (\nu_n-\nu_m)   } \Big[ 1 +{\big|S'_m-\E_{(V,\e)} S'_m \big|\over \s_m}\Big]^2
\cr  &\le&   C\,  \Big(\frac{\s_m^2}{\nu_m}\Big)  {  \nu_m \over
    \theta (\nu_n-\nu_m)    } \Big[ 1 +{\big|S'_m-\E_{(V,\e)} S'_m \big|\over \s_m}\Big]^2
  \cr  &\le&   C\,  \Big(\frac{\s_m^2}{\nu_m}\Big)  {1 \over
    \theta (\frac{\nu_n}{  \nu_m}-1)    } \Big[ 1 +{\big|S'_m-\E_{(V,\e)} S'_m \big|\over \s_m}\Big]^2  \cr  &\le&   \Big(\frac{\s_m^2}{\nu_m}\Big)   { C_{\theta,\rho}\over
    \sqrt {\nu_n\over \nu_m}-1   } \,\Big[ 1 +{\big|S'_m-\E_{(V,\e)} S'_m \big|\over \s_m} \Big]^2 .
\end{eqnarray*}
    Therefore
\begin{eqnarray}\label{ }& &\E_{(V,\e)} {\bf 1}_{\{B_n>\theta\nu_n,   B_n-B_m> \theta (\nu_n-\nu_m)\}} {   |b_m| ^2\over
 {B_{n}\over B_{m}}-1   }\cr
&\le&   \Big(\frac{\s_m^2}{\nu_m}\Big)   { C_{\theta,\rho}\over
    \sqrt {\nu_n\over \nu_m}-1   }   \Big[ 1 +{\big|S'_m-\E_{(V,\e)} S'_m \big|\over \s_m} \Big]^2
\, \le  \, \Big(\frac{\s_m^2}{\nu_m}\Big)   { C_{\theta,\rho}\over
    \sqrt {\nu_n\over \nu_m}-1   }  .
\end{eqnarray}

 We thus arrive at
\begin{equation}\label{e11} \E_{(V,\e)}{\bf 1}_{\{B_n>\theta\nu_n,   B_n-B_m> \theta (\nu_n-\nu_m)\}} \Big|   e^{-{{\kappa'_n}^2\over
D^2(B_n/2)}}
  -
   e^{-{(\kappa'_n-\kappa'_m )^2\over D^2(B_{n }-B_{ m})/2 }}\Big|
  \, \le \, \Big(\frac{\s_n }{  \sqrt {\nu_n}}+\frac{\s_m }{\sqrt {\nu_m}}\Big)^2 { C_{\theta,\rho}\over
    \sqrt {\nu_n\over \nu_m}-1   }  ,
\end{equation}
 
Consequently, 
\begin{equation}\label{e11a}|A''_{12}|\,\le \, 2\rho^{\nu_n-\nu_m}+\Big(\frac{\s_n }{  \sqrt {\nu_n}}+\frac{\s_m }{\sqrt {\nu_m}}\Big)^2  { C_{\theta,\rho}\over
    \sqrt {\nu_n\over \nu_m}-1   }.\end{equation}
 As by \eqref{Asecond.11},
$$|A''_{11}| \,\le\,{ \E_{(V,\e)} B_*\over \sqrt \theta (\sqrt{{\nu_n\over \nu_m}}-1) }+\|B_*\|_\a\,\sqrt{\nu_m} \,\rho^{(1-1/\a)(\nu_n-\nu_m)},$$
it follows that 
\begin{equation}\label{e11b}|A''_1|\,\le \, { \E_{(V,\e)} B_*^{1/2}\over \sqrt \theta (\sqrt{{\nu_n\over \nu_m}}-1) }+\|B_*^{1/2}\|_\a\,\sqrt{\nu_m} \,\rho^{(1-1/\a)(\nu_n-\nu_m)}+ 2\rho^{\nu_n-\nu_m}+ \Big(\frac{\s_n }{  \sqrt {\nu_n}}+\frac{\s_m }{\sqrt {\nu_m}}\Big)^2  { C_{\theta,\rho}\over
    \sqrt {\nu_n\over \nu_m}-1   }.\end{equation}
 We have $$|A    |\le  A'+C_0|A''_2|+ C_\theta\Big({\s_n \over \sqrt{\nu_n}}\Big)\,A''_1.
   $$   Recalling that $   |A'| \le \s_n\, {\color{black} \prod_{j=m+1}^n\vartheta_j  }$, and $|A''_2|\le C\, \s_n \big\{ \rho^{ \nu_n-\nu_m }  +  {1\over (\theta\nu_n)^{3/2}}+ {1\over
(\theta(\nu_n-\nu_m))^{3/2}}\big\}$,  by \eqref{e1} and \eqref{e6} respectively,
we obtain
\begin{eqnarray}\label{e11c} |A |&\le &  \s_n \, {\color{black} \prod_{j=m+1}^n\vartheta_j  }+\,C_0\,C\, \s_n \Big\{ \rho^{ \nu_n-\nu_m }  +  {1\over (\theta\nu_n)^{3/2}}+ {1\over
(\theta(\nu_n-\nu_m))^{3/2}}\Big\} 
\cr & &  +C_\theta\Big({\s_n \over \sqrt{\nu_n}}\Big)\bigg\{ { \E_{(V,\e)} B_*\over \sqrt \theta (\sqrt{{\nu_n\over \nu_m}}-1) } 
     +\,\|B_*\|_\a\,\sqrt{\nu_m} \,\rho^{(1-1/\a)(\nu_n-\nu_m)}
+ 2\rho^{\nu_n-\nu_m} 
    \cr & & + \Big(\frac{\s_n }{  \sqrt {\nu_n}}+\frac{\s_m }{\sqrt {\nu_m}}\Big)^2  { C_{\theta,\rho}\over
    \sqrt {\nu_n\over \nu_m}-1   }\bigg\}.
    \end{eqnarray}  
  We note that $$      \rho^{\nu_n-\nu_m}  +  {1\over (\theta\nu_n)^{3/2}}+ {1\over
 (\theta(\nu_n-\nu_m) )^{3/2}} 
$$ is less than $  {1  \over
 (\nu_n-\nu_m) ^{3/2}} $, up to a constant $C_{\theta,\rho,\a}$. Let    
 $\s(n,m) =\max \big(\frac{\s_n }{\sqrt{\nu_m}},\frac{\s_m }{\sqrt{\nu_m}},1\big)$.
  \vskip 3 pt  By    considering separately the cases $\nu_n-\nu_m\ge 1$,  $0<\nu_n-\nu_m< 1$, we thus get 
 \begin{eqnarray}\label{e11ca} |A |&\le & {\color{black}\s_n\prod_{j=m+1}^n\vartheta_j }  
\,  +\,
 C_{\theta,\rho,\a} \,{\s_n  \over
 (\nu_n-\nu_m) ^{3/2}} 
\cr & &  +C_{\theta,\rho,\a} \,\Big({\s_n \over \sqrt{\nu_n}}\Big)\bigg\{ { 1\over    \sqrt{{\nu_n\over \nu_m}}-1} 
     +\, \sqrt{\nu_m} \,\rho^{(1-1/\a)(\nu_n-\nu_m)}
 \cr & &\qq\qq\qq\qq+\Big(\frac{\s_n }{  \sqrt {\nu_n}}+\frac{\s_m }{\sqrt {\nu_m}}\Big)^2  { 1 \over
    \sqrt {\nu_n\over \nu_m}-1   }\bigg\}
 .
    \end{eqnarray}  
 
  \vskip 2 pt
Now by \eqref{tD}, 
$1  \le  \frac{2}{ D   }\,\frac{\s   }{\sqrt{\t_Y}} $, and so 
$$\s(n,m) \le \frac{C}{D}\, \max \big(\frac{\s_n }{\sqrt{\nu_n}},\frac{\s_m }{\sqrt {\nu_m}} \big).$$
Thus 
\begin{eqnarray}\label{e11ca.f} |A |&\le & {\color{black}\s_n\prod_{j=m+1}^n\vartheta_j }  
\,  +\,
 C_{\theta,\rho,\a}  \,\Big({\s_n \over \sqrt{\nu_n}}\Big)\,{\sqrt{\nu_n} \over
 (\nu_n-\nu_m) ^{3/2}} 
\cr & &  +C_{\theta,\rho,\a} \,\Big({\s_n \over \sqrt{\nu_n}}\Big)\bigg\{ { 1\over    \sqrt{{\nu_n\over \nu_m}}-1} 
     +\, \,{\sqrt{\nu_m} \over
 (\nu_n-\nu_m) ^{3/2}} 
 \cr & &\qq\qq\qq\qq+\Big( \frac{C}{D}\, \max \big(\frac{\s_n }{\sqrt{\nu_n}},\frac{\s_m }{\sqrt {\nu_m}} \big)\Big)^2  { 1 \over
    \sqrt {\nu_n\over \nu_m}-1   }\bigg\}
\cr &\le &   {\color{black}\s_n\prod_{j=m+1}^n\vartheta_j }  
   +C_{\theta,\rho,\a} \,\Big({\s_n \over \sqrt{\nu_n}}\Big)\bigg\{ { 1\over    \sqrt{{\nu_n\over \nu_m}}-1} 
     +\, \,{\sqrt{\nu_n} \over
 (\nu_n-\nu_m) ^{3/2}} \Big\}
 \cr & &  +C_{\theta,\rho,\a}D\, \Big( \frac{C}{D}\, \max \big(\frac{\s_n }{\sqrt{\nu_n}},\frac{\s_m }{\sqrt {\nu_n}} \big)\Big)^3 \,\bigg\{   { 1 \over
    \sqrt {\nu_n\over \nu_m}-1   } \bigg\}
  \cr&\le &   C_{\theta,\rho,\a} \,   \frac{C}{D^2}\, \max \big(\frac{\s_n }{\sqrt{\nu_n}},\frac{\s_m }{\sqrt {\nu_m}} \big)^3
  \cr & & \qq \times\bigg\{\, {\color{black}\nu_n^{1/2} \prod_{j=m+1}^n\vartheta_j }  
   +{ 1\over    \sqrt{{\nu_n\over \nu_m}}-1}+\,{\sqrt{\nu_n} \over
 (\nu_n-\nu_m) ^{3/2}} \bigg\}.
    \end{eqnarray}  
   
\vskip 3 pt Using   \eqref{basic}, \eqref{llt.iid.appl.}
we obtain (the value of $\a$ being irrelevant and $\theta,\rho$ depending only on $\e$, the value of which being irrelevant too)
  \begin{align}\label{e14a}
  \big|{\mathbb E\,}  & Y_nY_m\big|  \,\le \,   \frac{C}{D^2}\, \max \big(\frac{\s_n }{\sqrt{\nu_n}},\frac{\s_m }{\sqrt {\nu_m}} \big)^3\cr &\times \bigg\{\, {\color{black}\nu_n^{1/2} \prod_{j=m+1}^n\vartheta_j }  
   +{ 1\over    \sqrt{{\nu_n\over \nu_m}}-1}+\,{\sqrt{\nu_n} \over
 (\nu_n-\nu_m) ^{3/2}} \bigg\}
 \end{align}
if $n>m\ge 1 $.
  This together with \eqref{Y},   proves \eqref{t2[asllt]enonce1}.

\vskip 8  pt 
 
 Finally
if $ {\tau}=\sup_{j\ge 1} \t_{X_j}<1$, we pick $  \vartheta_j $ so that 
$$ 0 < \vartheta_j \le   \t_{X_j} \le    {\tau} <1, \qq \quad j=1,2,\ldots,$$
 and  note that  $$\log \frac{1}{\vartheta_j} \ge \log \frac{1}{ {\tau}} \ge \Big(\frac{\log \frac{1}{ {\tau}}}{ {\tau}}\Big)\, \vartheta_j , $$
Thus
 $$   \prod_{j=m+1}^n\vartheta_j    
=e^{ -\sum_{j=m+1}^n \log \frac{1}{\vartheta_j }   } \le \frac{C(M)}{(\sum_{j=m+1}^n \log \frac{1}{\vartheta_j } )^M} \le \frac{C( {\tau},M)}{(\sum_{j=m+1}^n  \vartheta_j   )^M}=\frac{C( ,M)}{(\nu_n -\nu_m )^M}.
$$    

 Taking $M=3/2$ provides the bound 
  $$   \prod_{j=m+1}^n\vartheta_j    
\le \frac{C( {\tau} )}{(\nu_n -\nu_m )^{3/2}  }.
$$

By carrying it back   to   estimate \eqref{e14a}, 
we get 
\begin{equation}\label{e11ca.} \big|{\mathbb E\,}    Y_nY_m\big|\, \le        \,   \frac{C_{\tau,c }}{D^2}\, \max \big(\frac{\s_n }{\sqrt{\nu_n}},\frac{\s_m }{\sqrt {\nu_m}} \big)^3
 \bigg\{\,  { 1\over    \sqrt{{\nu_n\over \nu_m}}-1}+\,{\sqrt{\nu_n} \over
 (\nu_n-\nu_m) ^{3/2}} \bigg\},
    \end{equation}  which is estimate \eqref{t2[asllt]enonce2}. This completes the proof of Theorem \ref{t2[asllt]}.
\vskip 7 pt

 \subsection{Proof of Corollary    
 \ref{cort1}}\label{s3.1}
  Let $0<c<1$. Let $\nu_m\le c\,\nu_n$. Then
$$   {1\over    \sqrt{ \nu_n/\nu_m}-1   }\le \Big({1\over 1-\sqrt c} \Big) \, \sqrt{\nu_m\over \nu_n}= C_c \, \sqrt{\nu_m\over \nu_n} . $$
 Further
  $$     {\sqrt \nu_n   \over
 (\nu_n-\nu_m) ^{3/2}}=  \sqrt {\nu_n\over \nu_n-\nu_m}\     {  1\over
 (\nu_n-\nu_m)  }  =  {1\over{ (1- \nu_m/\nu_n)^{3/2} }}
\ {1 \over \nu_n}\le {1\over{ (1- c)^{3/2} }}
\ {1  \over \nu_n  }  . $$
By incorporating these estimates into \eqref{e11ca} 
 we get
\begin{eqnarray}\label{e11a1}\big|{\mathbb E\,}   Y_nY_m\big|  \,\le \,     \frac{C_{\tau,c}}{D^2}\, \max \big(\frac{\s_n }{\sqrt{\nu_n}},\frac{\s_m }{\sqrt {\nu_m}} \big)^3  \sqrt{\nu_m\over \nu_n}.\end{eqnarray}
This proves Corollary \ref{cort1}.

\vskip 20 pt

  In the following Example,  we study    the correlation of the system of random variables (recalling  
       \eqref{Y})
 \beq\label{cor.syst} {\vartheta_nY_n\over  
 \s_n\sqrt{\nu_n}}  = {\vartheta_n   ({\bf 1}_{\{S_n=\kappa_n\}}-{\mathbb P}\{S_n=\kappa_n\}) \over  
  \sqrt{\nu_n}}   , \qq\quad n\ge 2.
\eeq

We consider the   case   
 $$\lim_{n\to\infty} \t_{X_n} =0 .$$

\subsection{Example}\label{s2..}
 Let  \beq \label{BE.data}    {\rm Var} (X_n)  =\t_{X_n}= \log  n  -   \log (n-1)   , \qq \quad n\ge 1.
\eeq
We also choose 
$$  D=2, \qq\qq \t_n=\t_{X_n}, \qq n\ge 1.
$$    As  $\nu_n= \log n$, assumption \ref{nun} is satisfied.  By \eqref{BE.data}, 
\beq
   \s_n=\sqrt {\log n} = \sqrt{\nu_n} .
   \eeq   
   This   is compatible with \eqref{tDn},
   since we assumed $D=2$.  We have $\frac{ \vartheta_n }{  
 \s_n\sqrt{\nu_n}}=\frac{ 1 }{  
n \log n}$, thus the series $\sum_{n\ge 1}\frac{ \vartheta_n }{  
 \s_n\sqrt{\nu_n}}$ is divergent. 
   \vskip 5 pt

       
In the case considered, 
\beq\label{cor.syst.ex}
   { Y_n\over  
  n \log n } = { {\bf 1}_{\{S_n=\kappa_n\}}-{\mathbb P}\{S_n=\kappa_n\}  \over  
 n\sqrt {\log n}}    , \qq\quad n\ge 2.
 \eeq

We    examine the magnitude of the  $L^2$-increments.
 
\begin{lemma}\label{example2} There exists an absolute constant $C$ such that for any integers   $1<M_1<M_2$, 
 \ben\label{sm1m2.} &&\E\Big|\sum_{M_1\le m\le   M_2} {   Y_m  \over  
 m\log m  }\,\Big|^2\,\le \, C \,\sum_{M_1\le m\le   M_2}  {1\over   m\log m}
    .
\een
 \end{lemma}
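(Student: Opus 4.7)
The plan is to expand
$$\E\Big|\sum_{M_1\le m\le M_2}\frac{Y_m}{m\log m}\Big|^2 = \sum_{m}\frac{\E Y_m^2}{(m\log m)^2}+2\sum_{M_1\le m<n\le M_2}\frac{\E Y_m Y_n}{mn\log m\log n}$$
and dominate each of the two sums by $C\sum_m 1/(m\log m)$. The diagonal is handled term by term: hypothesis \eqref{nun.ka}-(2) together with $\s_m=\sqrt{\log m}$ gives $\E Y_m^2\le \s_m^2\,\P\{S_m=\kappa_m\}\le C\s_m=C\sqrt{\log m}$, so each diagonal term is bounded by $C/(m^2(\log m)^{3/2})\le C/(m\log m)$. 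The real work is the cross sum; my strategy is to split the pairs $(m,n)$ according to the ratio $\log m/\log n$ and to use the sharpest available correlation bound in each regime, exploiting that $\s_n=\sqrt{\nu_n}$ and $\sup_j\vartheta_j\le \log 2<1$ in this example, so that the hypotheses of Corollary \ref{cort1} are met.

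In the ``far'' region $m\le\sqrt n$, equivalently $\nu_m\le \nu_n/2$, Corollary \ref{cort1}(2) yields $|\E Y_m Y_n|\le C\sqrt{\log m/\log n}$. Inserting this,
$$\sum_{n=M_1}^{M_2}\frac{1}{n(\log n)^{3/2}}\sum_{M_1\le m\le\sqrt n}\frac{1}{m\sqrt{\log m}}\,\le\,C\sum_{n=M_1}^{M_2}\frac{\sqrt{\log n}}{n(\log n)^{3/2}}\,=\,C\sum_{n=M_1}^{M_2}\frac{1}{n\log n},$$
using the primitive $\int dx/(x\sqrt{\log x})=2\sqrt{\log x}$ to estimate the inner sum by $O(\sqrt{\log n})$. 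This is the principal contribution and already fits into the target bound.

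In the ``close'' region $\sqrt n<m<n$ one cannot use Theorem \ref{t2[asllt]} directly (its term $1/(\sqrt{\nu_n/\nu_m}-1)$ blows up as $\nu_m/\nu_n\to 1$); instead I rely on the elementary bound from Remark \ref{simple.correlation.bound}, namely $|\E Y_m Y_n|\le C(\s_n/\s_{n-m}+1)=C(\sqrt{\log n/\log(n-m)}+1)$ for $n-m\ge 2$, together with plain Cauchy--Schwarz $|\E Y_m Y_n|\le \sqrt{\E Y_m^2\,\E Y_n^2}\le C(\log n)^{1/2}$ for $n-m=1$. A further subdivision then works: for $\sqrt n<m\le n/2$ one has $\log(n-m)\asymp\log n$, so $|\E Y_m Y_n|\le C$, and since $\sum_{\sqrt n<m\le n/2}1/(m\log m)=O(1)$ the contribution is $O(1/(n\log n))$ per $n$; for $n/2<m\le n-2$, writing $k=n-m$, the factor $1/(mn\log m\log n)\asymp 1/(n^2(\log n)^2)$ combined with $\sum_{k=2}^{n/2}\sqrt{\log n/\log k}=O(n)$ gives $O(1/(n(\log n)^2))$ per $n$; finally $m=n-1$ contributes only $O(1/(n^2(\log n)^{3/2}))$ per $n$.

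The chief technical obstacle is precisely this small-$(n-m)$ regime, where the general correlation bound of Theorem \ref{t2[asllt]} is too loose to integrate against $1/(mn\log m\log n)$; the rescue is that the elementary bound \eqref{basic.corr.bd} is never worse than $C\sqrt{\log n}$, and this worst case is absorbed by the very small weight $1/(n^2(\log n)^2)$. Summing the three close sub-contributions together with the far region over $n\in[M_1,M_2]$ gives at most $C\sum_{n=M_1}^{M_2}1/(n\log n)$, which together with the diagonal estimate proves the claim with a universal constant.
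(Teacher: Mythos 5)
Your proof is correct, and it rests on the same two correlation lemmas as the paper's — Corollary~\ref{cort1}/\ref{c1[cor.est]} for well-separated indices, and the elementary bound of Remark~\ref{simple.correlation.bound} (equivalently \eqref{basic.corr.bd}) near the diagonal — but the organization of the cross sum is genuinely different. The paper fixes $m$, sums over $n>m$, and splits at $n=m^b$ (for a fixed $b>1$): Case~2 treats $m+1\le n<m^{b}$ in a single pass, and Case~3 treats $n\ge m^{b}$ via Lemma~\ref{hlp} with $\delta=1/2$. You instead fix $n$, sum over $m<n$, split at $m=\sqrt n$, and then further subdivide the close range $\sqrt n<m<n$ into $\sqrt n<m\le n/2$ (where $\log(n-m)\asymp\log n$ kills the $\s_n/\s_{n-m}$ factor), $n/2<m\le n-2$ (where the $1/(n^2(\log n)^2)$ weight absorbs the $O(n)$ size of $\sum_k\sqrt{\log n/\log k}$), and $m=n-1$ (handled by plain Cauchy--Schwarz). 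What this finer subdivision buys is a clean treatment of the immediate off-diagonal: the paper's Case~2 estimate involves the factor $1/\sqrt{\log(n-m)}$, which is degenerate (in fact infinite, since $\nu_1=\log 1=0$ in this example) when $n-m=1$, so that term really does need the separate Cauchy--Schwarz bound you supplied. Your treatment is therefore a bit more careful than the paper's at this one point, while reproducing the same $C\sum_{M_1\le m\le M_2}1/(m\log m)$ bound overall.
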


\begin{proof}Let $b>1$ be some fixed real. 
We consider several cases.    \vskip 2pt {\tt Case\,1:}($m=n$) By using \eqref{basic0}, we have $\E  Y_n^2\le C\,\sqrt{  n}$,  so that
 $$ {\E  Y_m^2\over  m\log m} \,\le \,   {C\over   \sqrt{  m}\log m}\,\le \, C.$$

 \vskip 2pt {\tt Case\,2:}($m+1 \le n< m^b $) 
  By using \eqref{basic.corr.bd}, 
 
\ben\label{t2[asllt]enonce2.cite..} 
\sum_{m+1 \le n< m^b}   { |\E Y_mY_n|\over  
n\log n}&    \le  &      \, C \sum_{m+1 \le n< m^b}
 \,  \Big({ 1\over   n\sqrt{\log n}  \sqrt{ \log (n-m)} }+{1\over  
n\log n}\Big)  
\cr &    \le  &      \, C_b \Big({ 1\over   \sqrt{\log m}} \sum_{1\le h \le m^b }
 \,  { 1\over   h \sqrt{ \log h} } + \big(\log\log m^b- \log\log m\big)\Big)
 \cr &    \le  &      \, C_b \Big({ 1\over   \sqrt{\log m}}\, \sqrt{\log m}+\log\frac{b \log m}{\log m}\Big) \le C_b. \een

\vskip 2pt {\tt Case\,3:}($n\ge m^b$) By Corollary \ref{c1[cor.est]}, \begin{align}\label{t2[asllt]enonce2.cite} 
|\E Y_nY_m|&\ =\  \s_n \s_m \, \Big|{\mathbb P}\{S_n=\k_n, S_m=\k_m\}- {\mathbb P}\{S_n=\k_n \}{\mathbb P}\{  S_m=\k_m\} \Big|
 \cr &  \, \le        \,   \frac{C_{\tau }}{D^2}\, \max \big(\frac{\s_n }{\sqrt{\nu_n}},\frac{\s_m }{\sqrt {\nu_m}} \big)^3
 \bigg\{\,  { 1\over    \sqrt{{\nu_n\over \nu_m}}-1}+\,{\sqrt{\nu_n} \over
 (\nu_n-\nu_m) ^{3/2}} \bigg\}
  \cr &  \, \le        \, C 
 \bigg\{\,  { 1\over    \sqrt{{\nu_n\over \nu_m}}-1}+\,{\sqrt{\nu_n} \over
 (\nu_n-\nu_m) ^{3/2}} \bigg\} \, \le        \, C 
 \,  { 1\over    \sqrt{{\nu_n\over \nu_m}}-1} .
\end{align}
Thus
\ben\sum_{n>m^b}   { |\E Y_mY_n|\over  
n\log n}&    \le   &     \, C \sum_{n>m^b}     \,  { 1\over   n(\log n)  \sqrt{{\log n\over \log m}}-1} .
\een We have ${\log n\over \log m} \ge (1-1/b) $. Thus
$$   {1\over    \sqrt{ {\log n\over \log m}}-1   }\le \Big({1\over 1-\sqrt c} \Big) \, \sqrt{{\log m\over \log n}}= C_c \, \sqrt{{\log m\over \log n}} . $$
So that
\beq\sum_{n>m^b}   { |\E Y_mY_n|\over  
n\log n}   \,  \le        \, C_b \sum_{n>m^b}     \,  { 1\over   n(\log n)  }\sqrt{{\log m\over \log n}}
\,  =         \, C_b  \sqrt{ \log m }\sum_{n>m^b}     \,  { 1\over   n(\log n)^{3/2}  }.
\eeq 
 This sum is a special case of the sum 
$$\sum_{n>N}\frac{  \vartheta_n }{\nu_{n}^\d\nu_n },$$
and we can apply Lemma \ref{hlp} with $\d= \frac12$. Consequently (choosing $N= m^b$)
$$  \sqrt{ \log m^b }\sum_{n>m^b}     \,  { 1\over   n(\log n)^{3/2}  }\le  C_b.$$
We deduce
\beq\sum_{n>m^b}   { |\E Y_mY_n|\over  
n\log n}   \,  \le        \, C_b          \sqrt{ \log m }\sum_{n>m^b}     \,  { 1\over   n(\log n)^{3/2}  }\,  \le        \, C_b.
\eeq 
By combining,
\beq\sum_{n\ge m }   { |\E Y_mY_n|\over  
n\log n}   \,  \le           \, C_b.
\eeq
Now  noticing that
\ben &&\E\Big|\sum_{M_1\le m\le   M_2} {   Y_m  \over  
 m\log m  }\,\Big|^2
 \cr &\le &\sum_{M_1\le m\le   M_2} { \E  Y^2_m  \over  
 (m\log m )^2 } +2\sum_{M_1\le m \le    M_2}\frac{1}{  m\log m}\Big(\sum_{  m<n\le    M_2} {  | \E Y_mY_n | \over  
 n\log n   }\Big).
\een
  
\end{proof}\vskip 3 pt 


\section{\gsec  A GENERAL ASLLT WITH ALMOST SURE CONVERGENT  SERIES}\label{s4}
 \vskip 10 pt 
In this Section we derive from Theorem \ref{t2[asllt]} and Corollary \ref{cort1}, a sharp  almost sure local limit theorem for sums of independent random variables. This one is  stated in Theorem \ref{t1[asllt].}, assertion (iv),  and thereby extends the ASLLT proved in the i.i.d. case (Theorem \ref{t1[asllt]..}), to the independent non-identically distributed case. This result is deduced from new  general stronger forms  in (i), (ii) and (iii),     in which the almost sure convergence of  the naturally associated series, namely 
$$
 \sum_{k\ge 1}  
\sup
\bigg\{ \frac{\big|\sum_{2^j\le n<2^{j+1}}{     {\bf 1}_{\{S_n=\kappa_n\}}-{\mathbb P}\{S_n=\kappa_n\} 
  \over 
 \sqrt{\nu_n}}\big|}{\sum_{1\le n<2^{j} }{1\over \s_n\sqrt{\nu_n} }
}\,,\,   2^k\le\sum_{1\le n<2^{j} }{1\over \s_n\sqrt{\nu_n} }
<2^{k+1} \bigg\} , $$
is established.  For proving these results  {\it no} LLT is assumed; and thus  these    are  in that sense {\it disconnected} from LLT. However if the LLT is applicable, our assumptions can be weakened and the ASLLT  becomes stronger: compare   Claims (iii) and (iv) in Theorem \ref{t1[asllt].}.

\subsection{Basic Assumption}
 
 We   assume  throughout this Section that
 \begin{eqnarray*}    \hbox{\it The series $ \sum_{ 
n\ge 1}{\vartheta_n\over \s_n\sqrt{\nu_n} }$ is divergent.}
  \end{eqnarray*}

     \begin{remark}\label{miconv} 
   It will  be later proved   that  in the case this assumption is not satisfied, by  assuming only a little more than the convergence of the series   $ \sum_{ 
n\ge 1}{\vartheta_n\over \s_n\sqrt{\nu_n} }$,  a strong  convergence result holds, although   radically different from the almost sure local limit property. 
\end{remark}

\subsection{Introducing function $\boldsymbol h$}    \vskip 5 pt    
   Let  
   function  $h$   be    defined as follows:
\begin{eqnarray}\label{h}
  h(x)& =&  \displaystyle{\max_{1\le m\le x }}\Big( \frac{\s_m^2 }{ {\nu_m}}\Big), 
\qq \quad x\ge 1.
   \end{eqnarray}

 \begin{remark}\label{bdd.}By  inequality  \eqref{tDn}, $\s_n^2\,\ge  \,\frac{ D^2   }{4}\,\nu_n$, for $n\ge 1$, so that 
 $$h(x)\ge  { D^2   }/{4},\qq \quad x\ge 1.
$$
  Now if 
\begin{equation} \label{bdd}
   \s_n^2=\mathcal O(\nu_n)
   ,\end{equation}
 one can take $h(x)\equiv {\rm const.}$, and this shows that the basic assumption  is     fulfilled.  This follows from assertion  (i) of Th.\,162 in  \cite{HLP}.
 Condition \eqref{bdd} is obviously fulfilled in  the i.i.d. case. 
 \end{remark}   
  
 \subsection{A system of weights}

     \vskip 5 pt   
     Introduce a   system of weights associated with sequences $\{\t_n, n\ge 1\}$, $\{\s_n, n\ge 1\}$, and of key importance for controlling $\E Z_i^2$. 
     
     \vskip 2 pt Set  
\beq \label{omega}
\o(m) = \sum_{      \nu_m< \nu_n <2\nu_m} \, {  \vartheta_n  \over     \,(\s_n^2-\s_m^2)^{1/2}\,\sqrt{\nu_n }   }
\qq \quad m\ge 1.
\eeq
 
  From \eqref{tDn} follows that
\beq \label{omega1}
\o(m)\le   C_\t\,\sum_{      \nu_m< \nu_n <2\nu_m} \,    \frac {\vartheta_n}{\sqrt{\nu_{n } -\nu_{m} }\, \sqrt{\nu_{n }}  } .
\eeq
  
\vskip 5 pt    We begin with giving some examples for which the  following condition
    \beq \label{omega.star}
\o_X^*: =\sup_{m\ge 1} \,\o(m)<\,\infty
,\eeq
is satisfied.

\begin{example}[i.i.d. case]\label{example3.}   In this  case ($\t_n=C$, $\nu_n=n$, $\s_n=\sqrt n$), we have
$$\o(m)\,=\,    C\,\sum_{       m<  n < 2m} \, {  1 \over     \sqrt{ n } \,  \sqrt{n-m}  }  \,\le\,{  C \over     \sqrt{ m }}\, \sum_{     h=1}^{m} \, {  1 \over       \sqrt{h}  }
    \,\le\,{  C \over     \sqrt{ m }}\, \sqrt{ m }\,=\, C
   . $$
Thus $\o_X^*$ is finite. 
\end{example}

 \begin{example}[Cram\'er's probabilistic model]\label{cramer} This well-known model  basically consists with  a sequence of independent random variables $\xi=\{\xi_j,j\ge 3\}$, and    associated partial sums  $S_n =\sum_{j=3}^n \xi_j$, each  $\xi_j$ 
 being  defined   by
\begin{equation}\label{C.xin}\P\{\xi_j= 1\}= \frac1{\log j}, \qq \quad \P\{\xi_j= 0\}= 1-\frac1{\log j}.\end{equation}
      Thus for $j\ge 3$, $\P\{\xi_j=k\}\wedge\P\{\xi_j=k+1 \}=\frac{1}{\log j }$, if $k=0$,    equals $0$ for $k\in \Z_*$; so the span is 1, and $  \t_{\xi_j}=\frac{1}{\log j}$.  Choose $ \t_j= \t_{\xi_j}$,  then  
   $\nu_n=\sum_{j=3}^n \frac{1}{\log j}$.  
   We have $\E S_n=\sum_{j=3}^n    \frac{1}{\log j}$, $\s^2_n=\sum_{j=3}^n  (1-\frac{1}{\log j})(\frac{1}{\log j})$.
 Moreover $h(n)=\mathcal O(1)$.
 
 We have for $n>m$, $m$ large, using $\big( {x}/{\log x}\big)'=  {1}/{\log x}- {1}/{(\log x)^2}$,   $$ \s_n^2-\s_m^2 =\sum_{j=m+1}^n \big( \frac{1}{\log j}-\frac{1}{(\log j)^2}\big)\sim \int_{m}^n\big(\frac{x}{\log x}\big)'\dd x =\frac{n}{\log n}-\frac{m}{\log m}\ge (1-o(1))\frac{n-m}{\log n} . $$  Using also that $\nu_n\ge \frac{n}{\log n}-o(1)$,  $2\nu_m \sim \nu_{2m}$, we deduce 
\ben
\o(n) &\le &\sum_{\nu_m<\nu_n<2\o_m} \, {  \vartheta_n  \over     \,(\s_n^2-\s_m^2)^{1/2}\,\sqrt{\nu_n }   }\,\le\, C\sum_{\nu_m<\nu_n<2\o_m} \, \frac{1}{\log n}\,\sqrt{\frac{\log n}{n}}\,\sqrt{\frac{\log n}{n-m}}
\cr &\le & C\sum_{\nu_m<\nu_n<2\o_m}  \frac{1}{\sqrt{n-m}}\,\le\, C\int_m^n \frac{\dd x}{\sqrt{x(x-m)}}
 \,\le \, C\int_0^1 \frac{\dd u}{\sqrt{u(u+1)}}\, =C.
\een
So that $\o_\xi^*<\infty$.

By Proposition 3.1 in \cite{W1b}, the local limit theorem holds,
 \begin{eqnarray*}   \Big| \P \{S_n =\kappa \} -\frac{ 1 }{ \sqrt{2\pi}\s_n  }  e^{- \frac{(\k- m_n)^2}{    2  \s_n^2   } }    \Big|         & \le &    C\,
 \frac{(\log n)^{3/2}}{n}  ,
   \end{eqnarray*}
for all $\k\in\Z$ such that
$ |\k- m_n|  \le C \,\frac{n^{3/4}}{\log n}$.  \end{example}
In the next  Example  however  we can only show that $\o(m)=\mathcal O(\log\log m)$.    \begin{example}[Example \ref{s2..} continued]\label{example3..} Here    $\t_n=\frac1n$, $\nu_n=\log n$, $\s_n= \sqrt{\log n}$. By using Cauchy-Schwarz's inequality, 
\begin{eqnarray*} 
\o(m)&=&\sum_{      \nu_m< \nu_n <2\nu_m} \,  { 1  \over   n  \sqrt{(\log n) (\log  n-\log m)}}
\cr &  
  =  &        \sum_{m+1 \le n< m^2}
 \,   { 1\over   n\sqrt{\log n}  \sqrt{ \log (n-m)} }
 \cr &    \le &\Big(\int_m^{m^2}\frac{\dd x}{x\log x}\Big)^{1/2} \Big(\int_m^{m^2}\frac{\dd x}{x(\log x-\log m)} \Big)^{1/2} 
 \cr &    = &\sqrt{ \log 2}\, \Big(\int_1^{m }\frac{\dd t}{t \log t } \Big)^{1/2} 
 \cr &    = &\sqrt{(\log 2)\log\log m} .  
 \end{eqnarray*}  
 \end{example}
  \medskip \par
 
The next lemma is of relevance  for   estimating the rectangle sums appearing in the first Step of  the proof of Theorem \ref{t1[asllt].}.
\begin{lemma}\label{hlp}  We have  the following estimates,
\vskip 2 pt {\rm (i)} Let $0<\d<1$. Then for all $N\ge1$,
\begin{eqnarray*} \sum_{n>N}\frac{\d \vartheta_n }{\nu_{n-1}^\d\nu_n } \le    \frac{1}{\nu_{N}^\d } .
\end{eqnarray*}
\vskip 2 pt {\rm (ii)} Further, for $M>N\ge 1$  such that $  \nu_N>e$,
\begin{eqnarray}\label{delta.0..}
\Big|\sum_{N<n\le M}\frac{  \vartheta_n }{ \nu_n }\Big| 
 &\le &    \big|\log \nu_M -\log \nu_N \big| .\end{eqnarray}
\end{lemma}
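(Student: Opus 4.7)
The plan is to prove both estimates by a telescoping argument: in each case I will establish a pointwise inequality bounding the summand by the increment of an explicit primitive, and then sum.

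For (i), I will show that for every $n\geq 2$,
\[
\frac{\delta\,\vartheta_n}{\nu_{n-1}^\delta\,\nu_n}\;\le\;\frac{1}{\nu_{n-1}^\delta}-\frac{1}{\nu_n^\delta}.
\]
Multiplying through by $\nu_{n-1}^\delta\nu_n^\delta$ and recalling $\vartheta_n=\nu_n-\nu_{n-1}$, this reduces to $\delta\,\vartheta_n\,\nu_n^{\delta-1}\le \nu_n^\delta-\nu_{n-1}^\delta$, which is exactly the mean value theorem applied to $x\mapsto x^\delta$ on $[\nu_{n-1},\nu_n]$: since $0<\delta<1$, the derivative $\delta x^{\delta-1}$ is decreasing, so any intermediate value is $\geq \delta\nu_n^{\delta-1}$. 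Summing over $n>N$ telescopes to $\nu_N^{-\delta}-\lim_n \nu_n^{-\delta}$, and assumption \eqref{nun3} ($\nu_n\uparrow\infty$) erases the limit, producing the claimed bound $1/\nu_N^\delta$.

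For (ii), the analogous pointwise inequality is
\[
\frac{\vartheta_n}{\nu_n}\;\le\;\log\nu_n-\log\nu_{n-1},
\]
which follows from $\int_{\nu_{n-1}}^{\nu_n}\frac{dx}{x}\ge\frac{\nu_n-\nu_{n-1}}{\nu_n}=\frac{\vartheta_n}{\nu_n}$, valid as $1/x\ge 1/\nu_n$ on the interval. The hypothesis $\nu_N>e$ ensures $\log\nu_N>1>0$, so all the logarithms that appear are meaningful and positive; summing over $N<n\le M$ telescopes the right-hand side to $\log\nu_M-\log\nu_N$, and both sides of the resulting estimate being nonnegative justifies the absolute values in the statement.

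The only subtle point, which one must get right the first time, is the \emph{direction} in which to apply the mean value theorem in (i): one needs a \emph{lower} bound on the finite difference $\nu_n^\delta-\nu_{n-1}^\delta$, hence the intermediate value must be replaced by $\nu_n$ (the larger endpoint, which minimizes the decreasing derivative $\delta x^{\delta-1}$). Once this is chosen correctly, the rest is a straightforward telescoping computation; nothing deeper than elementary calculus is required.
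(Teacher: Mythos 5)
Your part (i) is essentially the paper's argument: the paper cites HLP Theorem~41 for the inequality $\delta\vartheta_n\nu_n^{\delta-1}\le \nu_n^\delta-\nu_{n-1}^\delta$, while you derive the same inequality from the mean value theorem together with the monotonicity of $x\mapsto\delta x^{\delta-1}$; the remaining telescoping is identical.

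Your part (ii), however, takes a genuinely different and shorter route. The paper proves (ii) by first re-running the telescoping of (i) with both endpoints to get
$\sum_{N<n\le M}\frac{\delta\vartheta_n}{\nu_{n-1}^\delta\nu_n}\le\nu_N^{-\delta}-\nu_M^{-\delta}$,
then expresses $\nu_N^{-\delta}-\nu_M^{-\delta}=\delta(\log\nu_M-\log\nu_N)\,e^{-\theta(N,M)}$ by Lagrange's theorem applied to $x\mapsto e^{-\delta x}$, and finally divides by $\delta$ and lets $\delta\to 0^+$ so that $\nu_{n-1}^{-\delta}\to 1$ term by term. You instead establish the pointwise estimate $\frac{\vartheta_n}{\nu_n}\le\log\nu_n-\log\nu_{n-1}$ directly from $\int_{\nu_{n-1}}^{\nu_n}\frac{dx}{x}\ge\frac{\nu_n-\nu_{n-1}}{\nu_n}$, and telescope. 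Your argument is self-contained, avoids the limiting passage in $\delta$, and is the more elementary of the two; it also makes transparent that the hypothesis $\nu_N>e$ (which the paper needs to keep the exponent $\theta(N,M)$ in Lagrange's formula positive) is not actually required for the conclusion, only for the cosmetic positivity of the individual logarithms. Both arguments are correct.
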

\begin{proof} (i) We proceed as in the proof of Th.\,162 in Hardy, Littlewood and P\'olya \cite{HLP}.
 By Th.\,41 in  \cite{HLP}, $\d \vartheta_n\nu_n^{\d-1}=\d\nu_n^{\d-1}(\nu_n-\nu_{n-1})\le \nu_n^{\d}- \nu_{n-1}^{\d}$. So that
\begin{eqnarray}\label{delta1}\sum_{n>N}\frac{\d \vartheta_n }{\nu_{n-1}^\d\nu_n }= \sum_{n>N}\frac{\d \vartheta_n\nu_n^{\d-1}}{\nu_{n-1}^\d\nu_n^\d}
&=&\sum_{n>N}\frac{\d\nu_n^{\d-1}(\nu_n-\nu_{n-1})}{\nu_{n-1}^\d\nu_n^\d}\le \sum_{n>N}\frac{\nu_n^{\d}- \nu_{n-1}^{\d}}{\nu_{n-1}^\d\nu_n^\d}
\cr &=&\sum_{n>N}\Big(\frac{1}{\nu_{n-1}^\d }-\frac{1}{\nu_{n}^\d } \Big)\, =\,  \frac{1}{\nu_{N}^\d }  .\end{eqnarray}

(ii) Now, at first for $M>N\ge 1$  such that $  \nu_N>e$, we have 
\begin{eqnarray}\label{delta.0}\sum_{N<n\le M}\frac{\d \vartheta_n }{\nu_{n-1}^\d\nu_n }&=& \sum_{N<n\le M}\frac{\d \vartheta_n\nu_n^{\d-1}}{\nu_{n-1}^\d\nu_n^\d}
=\sum_{N<n\le M}\frac{\d\nu_n^{\d-1}(\nu_n-\nu_{n-1})}{\nu_{n-1}^\d\nu_n^\d}  
\cr &\le & \sum_{N<n\le M}\frac{\nu_n^{\d}- \nu_{n-1}^{\d}}{\nu_{n-1}^\d\nu_n^\d}
= \sum_{N<n\le M}\Big(\frac{1}{\nu_{n-1}^\d }-\frac{1}{\nu_{n}^\d } \Big)
\cr&=&   \frac{1}{\nu_{N}^\d } -\frac{1}{\nu_{M}^\d }   \,=\,\d\,(\log \nu_M -\log \nu_N)\,  e^{-  \theta(N,M)} 
,
\end{eqnarray}
for some {\it positive} real $\theta(N,M)$ such that $\d\log \nu_N<\theta(N,M)<\d\log \nu_M$, where we used  Lagrange's Theorem in the last line of inequalities. It follows that  \begin{eqnarray}\label{delta.0.}\Big|\sum_{N<n\le M}\frac{  \d\vartheta_n }{\nu_{n-1}^\d\nu_n }\Big| 
 &\le &   \d\, \big(\log \nu_M -\log \nu_N\big)  .\end{eqnarray}
By dividing both sides by $\d$,   letting next $\d$ tend to 0 in the left-term,   we obtain
\begin{eqnarray}\label{delta.0..f}
\Big|\sum_{N<n\le M}\frac{  \vartheta_n }{ \nu_n }\Big| 
 &\le &    \big|\log \nu_M -\log \nu_N \big| .\end{eqnarray}
  \end{proof}
  
  
 
\subsection{Associated functions $\boldsymbol k$, $\boldsymbol M$, $\boldsymbol \Phi$   and sequence $  \mathcal M$}
  
Let  
\beq M(t)= \sum_{1\le \nu_n<t }\,{\vartheta_m\over \s_m\sqrt{\nu_m} },  \qq t\ge 1
\eeq 
  and set for $x\ge M(1)$,
$ M^{-1}(x)=\sup\{ t\ge 1: M(t)\le x\}$.
       \vskip 3 pt              
 Put for any positive integer 
$i$,
\begin{equation}\label{mi}     m_i=\sum_{2^i\le \nu_n<2^{i+1}}{\vartheta_n\over \s_n\sqrt{\nu_n} }.  \end{equation}

 \vskip 5 pt  
 Let  the sequence $\mathcal M$ be defined as follows,
\begin{equation}\label{Mi}         \mathcal M=\big\{M_J , J\ge 1\big\} \qq {\rm where}\quad     M_J=M(2^J)
=\sum_{1\le i<J}  m_i,\quad J\ge 1.\end{equation}
\vskip 2 pt   
\noi  The almost sure local limit theorem  in view,  will tightly rely upon     the asymptotic distribution  of   the sequence $\mathcal M$. 
 \vskip 3 pt It will be deduced from the study of  the   asymptotic almost sure behavior of the sequence 
\beq \label{zi} \mathcal Z= \{Z_i, i\ge 1\}\qq {\rm where\ for \ each} \ i, \quad Z_i=\sum_{2^i\le \nu_n<2^{i+1}}{\vartheta_nY_n\over  
 \s_n\sqrt{\nu_n}} , 
 \eeq
 recalling that $Y_n= \s_n  ({\bf 1}_{\{S_n=\kappa_n\}}-{\mathbb P}\{S_n=\kappa_n\})$. 
We will apply Theorem \ref{8.2} with the choices   $\xi_l=Z_l$ and  $u_l= m_l$, $l\ge 1$. We thus need assumptions \eqref{8.12}, \eqref{little.s} to be fulfilled.  
\vskip 3 pt Introduce function $k(x)$ 
\beq k(x) =h(x) \o(x), \qq\quad x\ge 1.
\eeq
\vskip 5 pt 
Now  let   
 the non-decreasing   function  $\Phi$ be      defined as follows:
\begin{eqnarray}\label{Phi}
     \Phi (x )  =k\circ M^{-1}(x).
 \end{eqnarray}


  \vskip 10 pt

   \begin{example}\label{example4}
  Assume that $h(x)=\mathcal O(1)$, and  $\o_X^*=\mathcal O(1)$ (condition \eqref{omega.star}), then 
$$\Phi(x) =\mathcal O(1).$$
In Examples \eqref{example3.},   \eqref{cramer},    condition \eqref{omega.star} is satisfied. 

 \end{example}

\vskip 10 pt
\subsection{The key metrical estimate.} We   control the $L^2$-norm of increments $\sum_{I\le i < J  }    Z_i$ and prove the following   estimate.
\begin{theorem}\label{inc} Assume that 
\beq m_i=\mathcal O(1), \qq i\to\infty.\eeq
There exists a constant $C_X$ such that for any $J\ge I\ge 1$,
 \begin{eqnarray} \label{estim.A2}
\E\Big|\sum_{I\le i < J  }    Z_i\Big|^2  &\le  &   C_{X} \,h(2^{J+1})\,    \sum_{I\le i \le J }\, \Big(\sum_{2^{  i}  <\nu_m\le 2^{i+1} }  {\vartheta_m  \,\max(1,\o(m) ) \over \s_m  \sqrt{\nu_m }  }   \Big) .\end{eqnarray} 

 In particular,  \begin{eqnarray} \label{estim.A2.enonce}
\E\Big|\sum_{I\le i < J  }    Z_i\Big|^2  &\le  &    C_{X} \,\Phi\big(M_J\big) \,   \Big( \sum_{I\le i \le J }\, m_i\Big)
    .\end{eqnarray}   
\end{theorem}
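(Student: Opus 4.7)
The plan is to expand
\[
\E\Big|\sum_{I\le i<J}Z_i\Big|^2=\sum_{n,m}\frac{\vartheta_n\vartheta_m}{\s_n\s_m\sqrt{\nu_n\nu_m}}\,\E Y_nY_m,
\]
where $n,m$ range over indices with $\nu_n,\nu_m\in[2^I,2^{J+1})$, and to split the sum according to the relative position of $n$ and $m$. By symmetry I fix $n\ge m$ and partition into three regimes: the diagonal $n=m$, the \emph{near} regime $\nu_m<\nu_n<2\nu_m$, and the \emph{far} regime $\nu_n\ge 2\nu_m$.

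On the diagonal, hypothesis \eqref{nun.ka}(2) yields $\E Y_n^2=\mathcal O(\s_n)$, so the contribution is at most $C\sum_n\vartheta_n^2/(\s_n\nu_n)\le C\sum_n\vartheta_n/(\s_n\sqrt{\nu_n})$ since $\vartheta_n/\sqrt{\nu_n}\le 1$. In the near regime I invoke the elementary bound $|\E Y_nY_m|\le C(\s_n/\s_{n-m}+1)$ of Remark \ref{simple.correlation.bound}: after dividing by $\s_n\s_m\sqrt{\nu_n\nu_m}$ and summing over $\nu_m<\nu_n<2\nu_m$, the first piece reproduces precisely the weight $\o(m)$ of \eqref{omega}, while the second piece reduces to $\sum_{\nu_m<\nu_n<2\nu_m}\vartheta_n/\nu_n$, which by Lemma \ref{hlp}(ii) telescopes to $\log 2$. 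Thus the near regime contributes at most $C\sum_m\vartheta_m(1+\o(m))/(\s_m\sqrt{\nu_m})$, with no factor of $h$ involved.

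The far regime is the crux. There I apply Corollary \ref{cort1}(1) with $c=\tfrac12$ (or Theorem \ref{t2[asllt]} directly), which gives $|\E Y_nY_m|\le CD^{-2}\max(\s_n/\sqrt{\nu_n},\s_m/\sqrt{\nu_m})^3\sqrt{\nu_m/\nu_n}$. Using $\max(\cdot)^2\le h(2^{J+1})$ I write $\max(\cdot)^3\le h(2^{J+1})\cdot(\s_n/\sqrt{\nu_n}+\s_m/\sqrt{\nu_m})$ and split the double sum accordingly; each piece, after substitution and the tail estimate $\sum_{n:\nu_n\ge 2\nu_m}\vartheta_n/\nu_n^{3/2}\le C/\sqrt{\nu_m}$ from Lemma \ref{hlp}(i) with $\delta=\tfrac12$, together with the liaison inequality \eqref{tDn} giving $\s_n\ge (D/2)\sqrt{\nu_n}$, reduces to at most $Ch(2^{J+1})\sum_m\vartheta_m/(\s_m\sqrt{\nu_m})$. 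Since $h(2^{J+1})\ge D^2/4$ by Remark \ref{bdd.}, the three regimes combine into
\[
\E\Big|\sum_{I\le i<J}Z_i\Big|^2\le C_X\,h(2^{J+1})\sum_m\frac{\vartheta_m\max(1,\o(m))}{\s_m\sqrt{\nu_m}},
\]
which is \eqref{estim.A2} after reorganizing the single sum over $m$ as the sum over the dyadic blocks $2^i\le\nu_m<2^{i+1}$, $I\le i\le J$.

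The main obstacle is the bookkeeping in the far regime: extracting a single factor of $h(2^{J+1})$ from the cubic $\max(\cdot)^3$ without accidentally accruing an extra $\sqrt{h}$ when the symmetric piece in $\s_m/\sqrt{\nu_m}$ is processed, and handling the product term $\nu_n^{1/2}\prod_{j=m+1}^n\vartheta_j$ in Theorem \ref{t2[asllt]}. When $\sup_j\vartheta_j<1$ this product is absorbed into $(\nu_n-\nu_m)^{-3/2}$ as in the proof of Corollary \ref{c1[cor.est]}, and otherwise it is controlled directly from $\vartheta_j<1$ together with $m_i=\mathcal O(1)$, which forces enough decay of the relevant tails. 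The ``in particular'' bound \eqref{estim.A2.enonce} is then obtained by pulling the sup of $\max(1,\o(m))$ over $\nu_m\le 2^{J+1}$ out of the inner sum and invoking the definition $\Phi(x)=k\circ M^{-1}(x)$, using $\sum_{I\le i\le J}\sum_{\nu_m\in[2^i,2^{i+1})}\vartheta_m/(\s_m\sqrt{\nu_m})=\sum_{I\le i\le J}m_i$.
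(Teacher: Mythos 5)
Your near/far decomposition of the double sum is a genuinely different organization from the paper's, which works block by block: the paper first bounds the ``rectangle'' sums $\sum_{I\le i<j-1<J}|\E Z_iZ_j|$ (using Corollary \ref{cort1} for $j\ge i+2$), then the in-block sums $\E Z_i^2$ (splitting off the diagonal and applying Remark \ref{simple.correlation.bound} to the off-diagonal part, which is where $\o(m)$ enters), and finally disposes of the adjacent-block terms $\E|Z_iZ_{i+1}|$ by Cauchy--Schwarz. Your ratio-based split absorbs the adjacent blocks seamlessly into the near/far dichotomy and is, in that respect, tidier. The tools you cite --- Corollary \ref{cort1}, the elementary bound of Remark \ref{simple.correlation.bound}, Lemma \ref{hlp}(i)--(ii), the liaison inequality \eqref{tDn}, and $m_i=\mathcal O(1)$ --- are exactly those the paper uses, and your near-regime and diagonal computations are correct.

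However, the far-regime step is where your argument does not close, and you say so yourself without resolving it. After writing $\max(\cdot)^3\le h(2^{J+1})\,(\s_n/\sqrt{\nu_n}+\s_m/\sqrt{\nu_m})$, the $\s_n/\sqrt{\nu_n}$ piece does reduce to $Ch\sum_m\vartheta_m/(\s_m\sqrt{\nu_m})$ as you claim; but the $\s_m/\sqrt{\nu_m}$ piece yields, after the tail estimate, a sum of the form $\sum_m\vartheta_m/\nu_m$ rather than $\sum_m\vartheta_m/(\s_m\sqrt{\nu_m})$, and passing from the former to the latter costs a further factor $\s_m/\sqrt{\nu_m}\le\sqrt{h(2^{J+1})}$, giving $h^{3/2}$ in that piece, not $h$. (The liaison inequality $\s_m\ge(D/2)\sqrt{\nu_m}$ goes the wrong way here.) This is not merely a ``bookkeeping'' issue: unless one imposes a two-sided bound $\s_n^2\asymp\nu_n$, the stated power of $h$ cannot be extracted from the cubic factor in Corollary \ref{cort1}. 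It is worth pointing out that the paper's own proof passes from Corollary \ref{cort1} to ``$|\E Y_nY_m|\le C_{c,D}\,h(n)\sqrt{\nu_m/\nu_n}$'' without comment, which amounts to the same silent replacement of $\max(\cdot)^3$ by $h$; so your flagged obstacle exposes a real imprecision, but your proposal does not supply the missing justification (for instance, the assumption $\s_n^2=\mathcal O(\nu_n)$ of \eqref{snlenun}, under which $h$ is bounded and the powers are harmless, or a restatement of \eqref{estim.A2} with $h^{3/2}$) any more than the paper does.
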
  
 \begin{proof}Let $0<c<1$. By Corollary \ref{cort1}, and assumption \eqref{nun},  there   exists a constant
$C_{c}  $  such that for all $1\le \nu_m\le c\nu_n$,$$\big|{\mathbb E\,}   Y_nY_m\big|  \,\le \,     C_{c,D} \,h(n)\,  \sqrt{\nu_m\over \nu_n}.$$
 Recall that
  $$ {\mathbb E\,} Y_n^2=\s_n^2
{\mathbb P}\{S_n=\kappa_n\}\big(1-{\mathbb P}\{S_n=\kappa_n\} \big) ={\mathcal O}(\s_n)
 $$  by assumption \eqref{nun.ka}-(2). 

 \vskip 3 pt {\it Step 1.} We first bound the rectangle sums 
  \beq\label{rectangle.sum}\sum_{I\le i< j-1  < J }| \E Z_iZ_j|.
  \eeq
 Let $1\le i<j -1$.  Then
\begin{eqnarray}\label{rectangle.sum.est1} | \E Z_iZ_j|&\le &\sum_{2^i\le \nu_m<2^{i+1}\atop
2^j\le \nu_n<2^{j+1}}  {\vartheta_m\vartheta_n\over\s_m\sqrt{\nu_m}\s_n\sqrt{\nu_n}} \ |\E Y_nY_m|
\cr &\le &C_{D} \,h(2^{j+1})\, \sum_{2^i\le \nu_m<2^{i+1}\atop
2^j\le \nu_n<2^{j+1}}\ {\vartheta_m\vartheta_n\over \s_n\s_m\sqrt{\nu_m\nu_n}} \,   \sqrt{\nu_m\over \nu_n}
\cr & = &C_{D} \,h(2^{j+1})\, \sum_{2^i\le \nu_m<2^{i+1} }{\vartheta_m \over \s_m \sqrt{\nu_m}  } \Big(\sqrt{\nu_m} \sum_{ 
2^j\le \nu_n<2^{j+1}}{ \vartheta_n\over  \s_n \nu_n   }\Big).
\end{eqnarray} 
Noting that $\nu_n\ge \nu_m$ implies $n\ge m$  by assumption \eqref{nun2},
 so that $\nu_n \ge 2^j$ and $\nu_m<2^{i+1}$, imply $n\ge m$, we have
\begin{eqnarray} \label{estim.A1}
& & \sum_{I\le i< j -1< J }| \E Z_iZ_j|
\cr &\le  &   C_{D} \,h(2^{J+1})\, \sum_{I\le i< j -1< J}\, \sum_{2^i\le \nu_m<2^{i+1} }{\vartheta_m\over \s_m \sqrt{\nu_m}  } \Big(\sqrt{\nu_m} \sum_{ 
2^j\le \nu_n<2^{j+1}}{\vartheta_n\over  \s_n \nu_n   }\Big)
 \cr &\le  &   C_{D} \,h(2^{J+1})\, \sum_{I\le i \le J }\, \sum_{2^i\le \nu_m<2^{i+1} }{\vartheta_m\over \s_m \sqrt{\nu_m}  } \Big(\sqrt{\nu_m} \sum_{ 
n\ge m }{\vartheta_n\over  \s_n \nu_n   }\Big)
\cr &\le  &   C_{D} \,h(2^{J+1})\, \Big(\max_{m\le 2^{J+1}}\sqrt{\nu_m} \sum_{ 
n\ge m }{\vartheta_n\over  \s_n \nu_n   }\Big) \Big(\sum_{I\le i \le J }\, m_i \Big)
    .\end{eqnarray} 
{\color{black} To control the mid-term,  we appeal to   Lemma \ref{hlp}.
By applying it with  $\d= \frac12$,  we get
\beq  \label{midterm}\nu_m^{1/2}  \sum_{ 
n\ge m }{\vartheta_n\over  \s_n \nu_n   }\,\le \, C \sqrt{\nu_m} \sum_{ 
n\ge m }{\vartheta_n\over    \nu_n^{3/2}   }\,\le \, C\,\nu_m^{1/2}\  \frac{1}{\nu_{m }^{1/2} } = C  .
\eeq
Therefore
\begin{eqnarray} \label{estim.A1f}
 \sum_{I\le i< j -1< J  }| \E Z_iZ_j|  &\le  &   C_{D} \,h(2^{J+1})\,   \Big(\sum_{I\le i \le J }\, m_i \Big)
    .\end{eqnarray} }
     
    \vskip 3 pt {\it Step 2.}  Now we bound the square sum
    \beq\label{square.sum} \sum_{I\le i \le J }  \E Z_i^2.
    \eeq
   We have 
  \begin{eqnarray} \E |Z_i|^2&=& \E\Big|\sum_{2^i\le \nu_m<2^{i+1} }  {\vartheta_m Y_m\over \s_m \sqrt{\nu_m}  } \Big|^2
  \cr &=& 2 
   \sum_{2^i\le \nu_m<\nu_n<2^{i+1} }  {\vartheta_m \vartheta_n\E Y_mY_n\over \s_m\s_n \sqrt{\nu_m\nu_n}  }+ \sum_{2^i\le \nu_m<2^{i+1} }  {\vartheta_m \E Y^2_m\over \s^2_m  \nu_m   } 
   \cr &:=& I_1+ I_2.
  \end{eqnarray}
  At first 
    we deduce from \eqref{basic0},
   \begin{eqnarray*}I_2\,=\, \sum_{2^i\le \nu_m<2^{i+1} }  {\vartheta_m \E Y^2_m\over \s^2_m  \nu_m   } \,\le \, C\sum_{2^i\le \nu_m<2^{i+1} }  {\vartheta_m  \over \s_m  \nu_m   } \,\le \, C\sum_{2^i\le \nu_m<2^{i+1} }  {\vartheta_m  \over \s_m  \sqrt{\nu_m}   }.
    \end{eqnarray*}
Now
 \begin{eqnarray}\label{I1bdd}
 |I_1|&\le &2 
   \sum_{2^i\le \nu_m<\nu_n<2^{i+1} }  {\vartheta_m \vartheta_n|\E Y_mY_n|\over \s_m\s_n \sqrt{\nu_m\nu_n}  }
    \end{eqnarray}
By \eqref{basic.corr.bd}
$$  |{\mathbb E\,} Y_mY_n|     \le  C_\t\, \Big(\frac {\s_n}{\sqrt{\s_{n}^2-\s_{m}^2 } } +1\Big) ,\qq\quad (n>m\ge 1)
$$
so that
\begin{eqnarray}\label{l1bound}
&&   \sum_{2^i\le \nu_m<\nu_n<2^{i+1} }  {\vartheta_m \vartheta_n|\E Y_mY_n|\over \s_m\s_n \sqrt{\nu_m\nu_n}  }
\cr  &\le & C_\t\,\sum_{2^i\le \nu_m<\nu_n<2^{i+1} }  {\vartheta_m \vartheta_n  \over \s_m\s_n \sqrt{\nu_m\nu_n}  }\Big(\frac {\s_n}{\sqrt{\s_{n}^2-\s_{m}^2 }  } +1\Big)
  \cr  &\le & {\color{black}C_\t\,\sum_{2^{  i}  <\nu_m\le 2^{i+1} }  {\vartheta_m    \over \s_m  \sqrt{\nu_m }  }  \ \Big\{\sum_{      \nu_m< \nu_n <2^{i+1}} \, {  \vartheta_n  \over     \sqrt{\nu_n }  } \, \Big( \frac {1}{\sqrt{\s_{n}^2-\s_{m}^2 }  } \Big)}  \Big\}
\cr & &\ + C_\t\,\Big(\sum_{2^i\le \nu_m <2^{i+1} }  {\vartheta_m    \over \s_m  \sqrt{\nu_m }  }\Big)^2
\cr  &= &   C_\t\,\Big(\sum_{2^{  i}  <\nu_m\le 2^{i+1} }  {\vartheta_m  \,\o(m)  \over \s_m  \sqrt{\nu_m }  }   \Big)   
 \ + C_\t\,\Big(\sum_{2^i\le \nu_m <2^{i+1} }  {\vartheta_m    \over \s_m  \sqrt{\nu_m }  }\Big)^2
 \cr  &\le &   C_\t\,\Big(\sum_{2^{  i}  <\nu_m\le 2^{i+1} }  {\vartheta_m  \,\max(1,\o(m) ) \over \s_m  \sqrt{\nu_m }  }   \Big)   ,
\end{eqnarray}
 since $  \sum_{2^i\le \nu_m <2^{i+1} }  {\vartheta_m    \over \s_m  \sqrt{\nu_m }  }=\mathcal O(1)$ by assumption.  
 
  \vskip 3 pt 
 We therefore get
\begin{align}\label{sum.squared.}
 \E Z_i^2  &\,\le \, C_\t\,\Big(\sum_{2^{  i}  <\nu_m\le 2^{i+1} }  {\vartheta_m  \,\max(1,\o(m) ) \over \s_m  \sqrt{\nu_m }  }   \Big),
   \end{align} 
and   \begin{align}\label{sum.squared..}
 \sum_{I\le i \le J }  \E Z_i^2  &\,\le \, C_\t\,\sum_{I\le i \le J } \Big(\sum_{2^{  i}  <\nu_m\le 2^{i+1} }  {\vartheta_m  \,\max(1,\o(m) ) \over \s_m  \sqrt{\nu_m }  }   \Big).   \end{align}  
 
   Further  
\begin{eqnarray}\label{sum.i,i+1}\E| Z_iZ_{i+1}|\le \|Z_i\|_2\|Z_{i+1}\|_2&\le &\frac12 \sum_{h\in\{i,i+1\}}\|Z_h\|_2^2
\cr&\le &  C_\t\,\Big(\sum_{2^{  i}  <\nu_m\le 2^{i+1} }  {\vartheta_m  \,\max(1,\o(m) ) \over \s_m  \sqrt{\nu_m }  }   \Big) . 
\end{eqnarray}


 \vskip 3 pt {\it Step 3.}   
By combining \eqref{sum.squared..} and \eqref{sum.i,i+1} with estimate \eqref{estim.A1f}
we get, 
\begin{eqnarray} \label{estim.A2f}
\E\Big|\sum_{I\le i < J  }    Z_i\Big|^2  &\le  &   C_{D} \,h(2^{J+1})\,    \sum_{I\le i \le J }\, \Big(\sum_{2^{  i}  <\nu_m\le 2^{i+1} }  {\vartheta_m  \,\max(1,\o(m) ) \over \s_m  \sqrt{\nu_m }  }   \Big)
\cr &\le  &   C_{D} \,h(2^{J+1})\,\Big(\max_{m\le 2^{J+1}}(1,\o(m) \Big)\,    \sum_{I\le i \le J }\, \Big(\sum_{2^{  i}  <\nu_m\le 2^{i+1} }  {\vartheta_m    \over \s_m  \sqrt{\nu_m }  }   \Big)
 \cr &=  &   C_{D} \,\Phi\big(M_J\big) \,   \Big( \sum_{I\le i \le J }\,  m_i   \Big)
    .\end{eqnarray}    

At this stage we make a useful observation.
 
\begin{remark}[$R$-geometric blocks]  \label{R.geometric.blocks} 
Let $R>1$ and consider the sequence $\mathcal Z(R)=\{Z_{R,i},i\ge 1\}$ of sums of $R$-geometric blocks defined as follows,
\beq \label{zi.m} Z_{R,i}=\sum_{R^i\le \nu_n<R^{i+1}}{\vartheta_nY_n\over  
 \s_n\sqrt{\nu_n}} , \qq\qq i\ge 1.
 \eeq
Correspondingly, we note
\beq \label{mi.m}  m_{R,i}=\sum_{R^i\le \nu_n<R^{i+1}}{\vartheta_n\over \s_n\sqrt{\nu_n} }, \qq\qq i\ge 1,
 \eeq 
 and   $\mathcal M_R$ the sequence 
\begin{equation}\label{miR}         \mathcal M_R=\big\{M_{R,J} , J\ge 1\big\} \qq {\rm where}\quad     M_{R,J}=M(R^J)
=\sum_{1\le i<J}  m_{R,i},\quad J\ge 1.\end{equation}
For controlling rectangle sums \eqref{rectangle.sum} of $2$-geometric blocks, 
$$\sum_{I\le i< j-1  < J }| \E Z_iZ_j|,$$
we have used Corollary \ref{cort1}, and assumption \eqref{nun}.  Next we controlled separately $\E| Z_iZ_{i+1}|$ in \eqref{sum.i,i+1}, by using Cauchy-Schwarz's inequality and an elementary inequality. 
 If in place we had rectangle sums  of $R$-geometric blocks, with $1<R<2$, then   this can be done quite similarly by first
   controlling the modified rectangle sums
 \beq\label{R.rectangle.sum}\sum_{I\le i< j-\d(R)  < J }| \E Z_iZ_j|,
  \eeq
for some suitable integer $\d(R)>1$ depending on $R$ only.  Next we can control  $\E| Z_iZ_{i+h}|$, $h=1,\ldots, \d(R)$ correspondingly as before.  Finally we control the square sums of $R$-geometric blocks the same way as in {\it Step 2} for the sum \eqref{square.sum}.
Consequently we control  by  using the same proof the sums of $R$-geometric blocks. For simplicity of the notation 
we only write it   for $2$-geometric blocks.\end{remark}

By Remark \ref{R.geometric.blocks}, we also have
\begin{eqnarray} \label{estim.A2.R}
\E\Big|\sum_{I\le i < J  }   Z_{R,i}\Big|^2  &\le  &  
    C_{D, R} \,\Phi\big(M(R^J)\big) \,   \Big( \sum_{I\le i \le J }\,  m_{R,i}  \Big)
    .\end{eqnarray}    \end{proof}

\vskip 15 pt
{
\noi {\bf Subsequence  case.} Let $\mathcal N\subseteq \N$ be an increasing sequence of positive integers and consider the $\mathcal N$-restricted blocks
\beq \label{subseq.Z}Z_i^\mathcal N= \sum_{2^i\le  \nu_n<2^{i+1}\atop n\in \mathcal N }{\t_nY_n\over \s_n\sqrt{\nu_n}}, \qq\quad i\ge 1.
\eeq
Let also 
\beq \label{subseq.m}m_i^\mathcal N= \sum_{2^i\le  \nu_n<2^{i+1}\atop n\in \mathcal N }{\t_n \over \s_n\sqrt{\nu_n}}, \qq\quad i\ge 1,
\eeq
and define correspondingly $\mathcal M^\mathcal N$, and $\mathcal M_j^\mathcal N$, $j\ge 1$, as before.
\begin{theorem}[Subsequences]\label{inc.subseq} Assume that 
\beq m_i^\mathcal N=\mathcal O(1), \qq i\to\infty.\eeq
There exists a constant $C_X$ such that for any $J\ge I\ge 1$,
 \beq \label{estim.A2subseq}
\E\Big|\sum_{I\le i < J  }    Z_i^\mathcal N\Big|^2  \,\le  \,   C_{X} \,h(2^{J+1})\,    \sum_{I\le i \le J }\, \Big(\sum_{2^{  i}  <\nu_m\le 2^{i+1}\atop m\in \mathcal N }  {\vartheta_m  \,\max(1,\o(m) ) \over \s_m  \sqrt{\nu_m }  }   \Big) .\eeq 

 In particular,  
 \begin{eqnarray} \label{estim.A2.enonce.subseq}
\E\Big|\sum_{I\le i < J  }    Z_i^\mathcal N\Big|^2  &\le  &    C_{X} \,\Phi\big(M_J\big) \,   \Big( \sum_{I\le i \le J }\, m_i^\mathcal N\Big)
    .\end{eqnarray}   
\end{theorem}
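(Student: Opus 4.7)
The plan is to reproduce the proof of Theorem \ref{inc} essentially verbatim, restricting every summation to indices belonging to $\mathcal N$. The pointwise correlation estimates on $|\E Y_m Y_n|$ furnished by Corollary \ref{cort1} and by \eqref{basic.corr.bd} are insensitive to whether $m$ or $n$ lie in $\mathcal N$: they hold for any pair $1\le m<n$. Consequently each step of the three-step argument transfers, with the sums being smaller than their unrestricted counterparts, and no fundamentally new ingredient is needed.

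Concretely, I would split $\E\big|\sum_{I\le i<J} Z_i^\mathcal N\big|^2$ into the square terms $\sum_i \E (Z_i^\mathcal N)^2$, the adjacent cross terms $\E Z_i^\mathcal N Z_{i+1}^\mathcal N$, and the rectangle terms $\sum_{i<j-1}|\E Z_i^\mathcal N Z_j^\mathcal N|$, exactly as in the proof of Theorem \ref{inc}. For the rectangle terms I apply Corollary \ref{cort1} inside the double sum indexed by $m,n\in\mathcal N$; the resulting inner sum $\sum_{n\ge m,\,n\in\mathcal N}\vartheta_n/(\s_n\nu_n)$ is only enlarged by dropping the $\mathcal N$-constraint on $n$, so the key uniform bound \eqref{midterm} obtained from Lemma \ref{hlp} (with $\delta=\tfrac12$) still yields a universal constant, and the argument leading to \eqref{estim.A1f} produces the analog with $m_i$ replaced by $m_i^\mathcal N$.

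For the square terms I expand $\E(Z_i^\mathcal N)^2=I_1+I_2$ as in \eqref{I1bdd}. The diagonal term $I_2$ is controlled by \eqref{basic0} and contributes $\mathcal O\big(\sum_{2^i\le \nu_m<2^{i+1},\,m\in\mathcal N}\vartheta_m/(\s_m\sqrt{\nu_m})\big)$. The off-diagonal term $I_1$ is bounded via \eqref{basic.corr.bd}; enlarging the inner $n$-range to $\{\nu_m<\nu_n<2^{i+1}\}\subset\{\nu_m<\nu_n<2\nu_m\}$ makes the inner sum dominated by $\o(m)$, giving
\[
\E (Z_i^\mathcal N)^2 \ \le \ C_\t \sum_{\substack{2^i<\nu_m\le 2^{i+1}\\ m\in\mathcal N}} \frac{\vartheta_m\,\max(1,\o(m))}{\s_m\sqrt{\nu_m}}.
\]
Cauchy--Schwarz handles the adjacent cross terms $\E|Z_i^\mathcal N Z_{i+1}^\mathcal N|$ as in \eqref{sum.i,i+1}, with the same bound. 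Summing the three contributions yields \eqref{estim.A2subseq}, and factoring out $h(2^{J+1})\cdot\max_{m\le 2^{J+1}}(1,\o(m))\le \Phi(M_J)$ gives \eqref{estim.A2.enonce.subseq}.

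The only point requiring care, rather than a genuine obstacle, is a bookkeeping one: the passage from $\mathcal N$-restricted to unrestricted sums is legitimate only inside the \emph{auxiliary} estimates (namely the bound \eqref{midterm} and the identification with $\o(m)$), where monotonicity of non-negative sums makes it harmless, while the \emph{outermost} summations that produce the factor $\sum_{I\le i\le J} m_i^\mathcal N$ must remain confined to $\mathcal N$. Once this distinction is respected, the proof is a direct rerun of the three steps of Theorem \ref{inc}, and the analogous $R$-geometric block version (cf.\ Remark \ref{R.geometric.blocks}) is obtained by the same substitution.
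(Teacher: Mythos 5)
Your proposal is correct and follows the paper's own route: the paper gives exactly this argument (in compressed form), stating that the three-step proof of Theorem \ref{inc} transfers verbatim to the subsequence case by restricting the outer sums over $m$ to $m\in\mathcal N$ while leaving the auxiliary inner bounds (the $\o(m)$ identification and the bound \eqref{midterm}) unrestricted. Your observation about which sums may safely be enlarged is precisely the implicit bookkeeping the paper relies on, so the two proofs coincide.
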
 
 It is of matter to indicate that applications to corresponding subsequence-ASLLT's, will require the  following modification of  assumption \ref{midiv} to hold:   \begin{eqnarray}\label{midiv.subseq}
   \sum_{ 
n\in \mathcal N}{\vartheta_n\over \s_n\sqrt{\nu_n} } =\infty,  \end{eqnarray}
    also that $m_i^\mathcal N$ might tend to 0, even in the i.i.d. case, and (see \eqref{Y}) that $\kappa_n$ \underline{defines} $Y_n$.
   
   \begin{proof}  It is a simple adaptation of the previous proof. In \eqref{rectangle.sum.est1}, we bounded the  correlation $| \E Z_iZ_j|$ by the sum
$$\sum_{2^i\le \nu_m<2^{i+1}\atop
2^j\le \nu_n<2^{j+1}}  {\vartheta_m\vartheta_n\over\s_m\sqrt{\nu_m}\s_n\sqrt{\nu_n}} \ |\E Y_nY_m|
,$$
next $ \sum_{I\le i< j -1< J }| \E Z_iZ_j|$ by the sum
$$\sum_{I\le i< j -1< J } \sum_{2^i\le \nu_m<2^{i+1}\atop
2^j\le \nu_n<2^{j+1}}  {\vartheta_m\vartheta_n\over\s_m\sqrt{\nu_m}\s_n\sqrt{\nu_n}} \ |\E Y_nY_m|
.
$$
which is shown in \eqref{estim.A1} to be bounded    by
  $    \sum_{I\le i \le J }\, m_i  $.
Similarly, in \eqref{I1bdd},  we controlled $ |I_1|$ by the sum
$$\sum_{2^i\le \nu_m<\nu_n<2^{i+1} }  {\vartheta_m \vartheta_n|\E Y_mY_n|\over \s_m\s_n \sqrt{\nu_m\nu_n}  }
$$
with  terms $|\E Y_mY_n|$, which is  next bounded in \eqref{l1bound}, by
$$ C_\t\,\Big(\sum_{2^{  i}  <\nu_m\le 2^{i+1} }  {\vartheta_m  \,\max(1,\o(m) ) \over \s_m  \sqrt{\nu_m }  }   \Big).$$
The control of the sum $I_2$ raises no peculiar problem. It is clear that all this is transferable {\it ipso facto} to the subsequence case by pointing at each place where $\nu_n$ appears, that $n\in \mathcal N$, and next collecting the modified bounds.
\end{proof}}


    \subsection{The ASLLT} We    now are in  position to state the main result of this Section.


\begin{theorem} \label{t1[asllt].} 
 Let  $\{\t_n, n\ge 1\}$ be   chosen so that \eqref{unif.bound.varthetaj} holds. 
   Suppose that 
   \begin{eqnarray}\label{midiv}
   \hbox{\it The series $ \sum_{ 
n\ge 1}{\vartheta_n\over \s_n\sqrt{\nu_n} }$ is divergent.}
  \end{eqnarray} We also assume that   the following condition is fulfilled:  
\begin{equation}\label{s2} \hbox{\it The series
$  s^2:= \displaystyle{\sum_{l\ge 1\atop 
[2^l,2^{l+1}[\cap \mathcal M\neq\emptyset}}
\frac{\Phi(2^{ l  })\, \left(1+ \log \sharp\{[2^l,2^{l+1}[\cap \mathcal M\}\right)}{  2^{ l  }  }$\  is convergent.}
\end{equation}
  Let  $\k_n\in \mathcal L(nv_0,D)$, $n=1,2,\ldots$   be a  sequence of integers   such that    \eqref{nun.ka} holds.
    
   \vskip 4 pt  {\rm (i)} We have (recalling that  $M_j= \sum_{1\le n<2^j }{\vartheta_n\over \s_n\sqrt{\nu_n} }$),
   $$
\bigg\|
\sum_{k\ge 1\atop 
[2^k,2^{k+1}[\cap \mathcal M\neq\emptyset}
  \sup_{2^k\le M_j  
<2^{k+1}} 
\bigg\{ \frac{\big|\sum_{1\le n<2^{j}}{   \vartheta_n(  {\bf 1}_{\{S_n=\kappa_n\}}-{\mathbb P}\{S_n=\kappa_n\} )
  \over 
 \sqrt{\nu_n}}\big|}{M_j }
       \bigg\} \bigg\|_2\le K  s.$$
    
          Further    
 \begin{equation}\label{Ya1}
 \lim_{j\rightarrow\infty} {1\over \sum_{1\le n<2^j}
 {\t_n\over \s_n\sqrt{\nu_n} } }\sum_{1\le n<2^{j}} { \t_n({\bf 1}_{\{S_n=\kappa_n\}}-  {\mathbb P}\{S_n=\kappa_n\} )  \over  \sqrt{\nu_n}}  \, \buildrel{\rm a.s.}\over = 0 .
\end{equation}
 {\rm (ii)} Assume that
 \begin{equation}\label{2}  \lim_{n\to \infty}  \s_n {\mathbb P}\{S_n=\k_n\}=
 \g, \qq\qq (0<\g<\infty)
  .\end{equation}
Then   we have
 \begin{equation}\label{2.mean} 
   \lim_{j\rightarrow\infty}  {1\over \displaystyle{\sum_{1\le n<2^j}{\vartheta_n\over \s_n\sqrt{\nu_n} }}}\ \sum_{1\le n<2^j}{  \vartheta_n {\bf 1}_{\{S_n=\kappa_n\}}  \over   \sqrt{\nu_n}} \, \buildrel{\rm a.s.}\over =\g
     .
\end{equation}

 \vskip 4 pt  {\rm (iii)} Assume that   
 \begin{equation}  \lim_{n\to \infty}  \s_n {\mathbb P}\{S_n=\k_n\}={D\over  \sqrt{ 2\pi} }e^{-
{ \k ^2\over  2   } },\end{equation}
and that $M(t)$ is slowly varying near infinity. 
 Then   the   ASLLT holds,
\begin{equation} \label{Ya.}
   \lim_{N\rightarrow\infty}  {1\over \displaystyle{\sum_{1\le n<N}{\vartheta_n\over \s_n\sqrt{\nu_n} }}}\ \sum_{1\le n<N}{ \vartheta_n  {\bf 1}_{\{S_n=\kappa_n\}}  \over   \sqrt{\nu_n}} \, \buildrel{\rm a.s.}\over ={D\over  \sqrt{ 2\pi} }e^{-{ \k ^2\over  2   } }
   .\end{equation}
\vskip 4 pt  {\rm (iv)} In particular, if the LLT is applicable to the sequence $X$, then for any sequence $\{\kappa_n, n\ge 1\}$ such that 
$$\lim_{n\to \infty}\frac{\k_n-a_n}{\s_n}=\kappa. $$ 
In   addition if $M(t)$ is slowly varying near infinity, then 
 \eqref{Ya.} holds.
  \vskip 4 pt  {\rm (v)}
  {\rm (Optimality of Assumption \eqref{midiv})} Assume that   $\Phi(x) = \mathcal O(1)$ and \begin{eqnarray}\label{miconv1}
   \sum_{n=1}^\infty {\t_n(\log\log n)^2\over \s_n\sqrt{\nu_n} }
<\infty  \,.\end{eqnarray}
Then     a     strong but  different almost sure convergence result takes place, namely that the sequence 
$$\sum_{n< 2^J}  {{\bf 1}_{\{S_n=\kappa_n\}}-{\mathbb P}\{S_n=\kappa_n\}\over  
  \sqrt{\nu_n}}\qq J=1,2,\ldots,$$ 
  converges  almost surely.

\end{theorem}
 

\begin{proof}
  By Theorem \ref{inc} the sequence $\mathcal Z$ satisfies  condition \eqref{8.12}  with the choice $\g=2$, $u_l=m_l$,  $\Psi(x)=Cx$ for some suitable  finite, positive constant $C$, and $\Phi(x)$ defined in \eqref{Phi}. In view of assumption \eqref{s2}, 
Theorem \ref{8.2} is in force. 
\vskip 2 pt
Therefore $${\mathcal S}^2
=\sum_{k\ge 1}
\sup\bigg\{   \frac{\big|\sum_{1\le \nu_n<2^{j+1}}{\t_nY_n\over 
 \s_n\sqrt{\nu_n}}\big|}{\sum_{1\le \nu_n<2^{j} }{\t_n\over \s_n\sqrt{\nu_n} }
}\,:\,   2^k\le\sum_{1\le \nu_n<2^{j} }{\t_n\over \s_n\sqrt{\nu_n} }
<2^{k+1}  \bigg\}^2,$$
satisfies  
$$ \|{\mathcal S}\|_2 \le K  s ,\qq \hbox{and  
}\qq
 \lim_{j\rightarrow\infty} {1\over \sum_{1\le \nu_n<2^{j} }{\t_n\over \s_n\sqrt{\nu_n} } }\sum_{1\le i<j}\sum_{2^i\le \nu_n<2^{i+1}}{\t_nY_n\over 
 \s_n\sqrt{\nu_n}} \, \buildrel{\rm a.s.}\over = 0.$$
 Recalling \eqref{Y}, $Y_n= \s_n  ({\bf 1}_{\{S_n=\kappa_n\}}-
 {\mathbb P}\{S_n=\kappa_n\} )$,   the latter means 
 \begin{equation}\label{Ya1.}
 \lim_{j\rightarrow\infty} {1\over \sum_{1\le \nu_n<2^{j} }{\t_n\over \s_n\sqrt{\nu_n} } }\sum_{1\le \nu_n<2^j}    { \t_n({\bf 1}_{\{S_n=\kappa_n\}}-  {\mathbb P}
 \{S_n=\kappa_n\})  \over  \sqrt{\nu_n}}  \, \buildrel{\rm a.s.}\over = 0 .
\end{equation}
 In particular, \eqref{2.mean} is immediate. 
Now if 
 \begin{equation}  \lim_{n\to \infty}  \s_n {\mathbb P}\{S_n=\k_n\}={D\over  \sqrt{ 2\pi} }e^{-
{ \k ^2\over  2   } },\end{equation}
 then\begin{equation}\label{Yb}
   \lim_{j\rightarrow\infty} {1\over \sum_{1\le \nu_n<2^{j} }{\t_n\over \s_n\sqrt{\nu_n} }}\sum_{1\le \nu_n<2^{j}}{\t_n   {\bf 1}_{\{S_n=\kappa_n\}}  \over   \sqrt{\nu_n}} \, \buildrel{\rm a.s.}\over =
  {D\over  \sqrt{ 2\pi} }e^{-
{ \k ^2\over  2   } } .
\end{equation}
This proves assertions (i) and (ii)  of the Theorem. 
  \vskip 2 pt 
 As to assertions (iii)-(iv),  we use  
  the fact (see Remark \ref{R.geometric.blocks} and estimate \eqref{estim.A2.R}) that the proof given remains true if instead of sieving by the geometric sequence $2^i, i\ge 1$, we sieve with the geometric sequence $R^i, i\ge 1$, where $R>1$ is arbitrary; that is \eqref{Ya1}, \eqref{2.mean} are true for these sequences. 
 Next using  \eqref{llt1} and  that $M(t)$ is slowly varying near infinity allows to conclude to \eqref{Ya.}.

   \vskip 5 pt  
 (v) 
  The attentive reader will have certainly   observed that assumption \eqref{midiv} is not required in the proof of Theorem \ref{inc},  but only in the application of Proposition \ref{8.2}. Consider the case where $\sum_{j=1}^\infty m_j <\infty$ and assume that $\Phi(x) = \mathcal O(1)$.  See Example  \ref{example4}-(i). In that case the above inequality takes the simpler form
 \begin{eqnarray*}
       \E\Big|\sum_{I\le i\le J} Z_i\Big|^2 &\le &C_{c,D}  \Big(\sum_{I\le i \le J}m_i \Big) .\end{eqnarray*} 
 Assume only a little more, namely that   $\sum_{i=1}^\infty  m_i(\log i)^2<\infty$, which by   definition of $m_i$ in \eqref{mi} means that 
\begin{eqnarray*} 
   \sum_{n=1}^\infty {\t_n(\log\log n)^2\over \s_n\sqrt{\nu_n} }
<\infty  \, ,\end{eqnarray*}
which is \eqref{miconv1}. We use Remark \ref{U.bounded}, and more precisely,   implication \eqref{8.12.U.bounded.as.cv}, thank to which it follows that the series $\sum_{l=1}^\infty Z_l $, namely the sequence 
 $$  \sum_{n< 2^J} {Y_n\over  
 \s_n\sqrt{\nu_n}}\qq J=1,2,\ldots,$$
  converges almost surely. Therefore as $Y_n= \s_n  ({\bf 1}_{\{S_n=\kappa_n\}}-{\mathbb P}\{S_n=\kappa_n\})$, the sequence 
$$\sum_{n< 2^J}  {{\bf 1}_{\{S_n=\kappa_n\}}-{\mathbb P}\{S_n=\kappa_n\}\over  
  \sqrt{\nu_n}}\qq J=1,2,\ldots,$$ 
  converges  almost surely. Consequently, if assumption \eqref{miconv1} is fulfilled,    a     strong but  different almost sure convergence result takes place.
 \end{proof}

 \subsection{The quasi-orthogonal case}We prove the following 
  \begin{theorem}\label{asllt.quasi.ortho}
 Assume that  $h(x)  =\mathcal O(1)$, $m_i\asymp (1)$ and further  that condition \eqref{omega.star}
 is satisfied. 
Then  the sequence $\{ {Z_i} ,i\ge 1\}$  is  a quasi-orthogonal system.
\end{theorem}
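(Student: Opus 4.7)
\medskip

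\noindent\textbf{Proof plan.} The plan is to verify the sufficient condition \eqref{quasi.o.criterion} recalled in subsection~\ref{sub2.3.qos}, namely
\[
\sup_{i\ge 1}\sum_{\ell\ge 1}|\E Z_iZ_\ell|<\infty,
\]
and to conclude by the characterization of quasi-orthogonality stated there. I will split the inner sum into three regimes: the near-diagonal indices $\ell\in\{i-1,i,i+1\}$, the forward off-diagonal $\ell>i+1$, and the backward off-diagonal $\ell<i-1$.

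For the \emph{near-diagonal terms}, the estimate \eqref{sum.squared.} obtained in Step~2 of the proof of Theorem~\ref{inc} gives
\[
\E Z_i^2\le C_\t\sum_{2^{i}<\nu_m\le 2^{i+1}}\frac{\vartheta_m\max(1,\o(m))}{\s_m\sqrt{\nu_m}}.
\]
Under the assumptions $\o_X^*<\infty$ and $m_i\asymp 1$, this yields $\E Z_i^2\le C\o_X^*m_i=\mathcal O(1)$, hence $\|Z_i\|_2=\mathcal O(1)$ uniformly in $i$. Cauchy--Schwarz then bounds $|\E Z_iZ_\ell|$ by an absolute constant for $\ell\in\{i-1,i,i+1\}$, contributing $\mathcal O(1)$ to $\sum_\ell|\E Z_iZ_\ell|$.

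For the \emph{forward tail} $\ell>i+1$, I plug $h\equiv\mathcal O(1)$ into the rectangle estimate \eqref{rectangle.sum.est1} and use $\s_n\ge(D/2)\sqrt{\nu_n}$ from \eqref{tDn} together with $\sqrt{\nu_m}\le 2^{(i+1)/2}$ on the block, to get
\[
|\E Z_iZ_\ell|\le C\,m_i\,2^{i/2}\sum_{2^\ell\le\nu_n<2^{\ell+1}}\frac{\vartheta_n}{\nu_n^{3/2}}.
\]
Summing over $\ell>i+1$ and applying Lemma~\ref{hlp}(i) with $\d=1/2$ gives $\sum_{\nu_n\ge 2^{i+2}}\vartheta_n/\nu_n^{3/2}\le C\,2^{-(i+2)/2}$; the geometric factors $2^{i/2}$ and $2^{-i/2}$ cancel, and the total is bounded by $C m_i=\mathcal O(1)$.

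For the \emph{backward tail} $\ell<i-1$, the symmetry $\E Z_iZ_\ell=\E Z_\ell Z_i$ allows the same rectangle bound with the roles of $i$ and $\ell$ reversed, giving
\[
|\E Z_iZ_\ell|\le C\,m_\ell\,2^{\ell/2}\sum_{2^i\le\nu_n<2^{i+1}}\frac{\vartheta_n}{\nu_n^{3/2}}.
\]
Here the inner tail is again $\mathcal O(2^{-i/2})$ by Lemma~\ref{hlp}(i), while $\sum_{1\le \ell\le i-2}2^{\ell/2}=\mathcal O(2^{i/2})$; since $\sup_\ell m_\ell<\infty$, the two geometric factors compensate and produce an $\mathcal O(1)$ contribution. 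This backward-tail balance is the only real subtlety of the argument, because the weights $2^{\ell/2}$ grow with $\ell$ and one must check that the sharp tail estimate supplied by Lemma~\ref{hlp}(i) is exactly strong enough to absorb them. Combining the three contributions gives $\sup_i\sum_\ell|\E Z_iZ_\ell|<\infty$, so $\{Z_i,i\ge 1\}$ is quasi-orthogonal by~\eqref{quasi.o.criterion}.
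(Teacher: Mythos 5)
Your proof is correct and takes essentially the same route as the paper: you verify the sufficient criterion \eqref{quasi.o.criterion} by splitting $\sum_\ell|\E Z_iZ_\ell|$ into a near-diagonal part, controlled via \eqref{sum.squared.} and Cauchy--Schwarz, and an off-diagonal part, controlled by the geometric decay of the block correlations coming from \eqref{rectangle.sum.est1}. The one cosmetic difference is in how the decay is packaged: you keep \eqref{rectangle.sum.est1} in its raw form, bound $\sqrt{\nu_m}\le 2^{(i+1)/2}$ on the $i$-th block and $\vartheta_n/(\s_n\nu_n)\lesssim\vartheta_n/\nu_n^{3/2}$ via \eqref{tDn}, and then telescope the tail with Lemma~\ref{hlp}(i); the paper instead first absorbs the ratio $\sqrt{\nu_m/\nu_n}$ into the factor $2^{-(j-i)/2}$ to rewrite the off-diagonal estimate as $|\E Z_iZ_j|\le C_D\,2^{-|i-j|/2}m_im_j$, and then sums the geometric series using $\sup_j m_j<\infty$. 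These are merely two bookkeepings of the same cancellation, since Lemma~\ref{hlp}(i) applied to $\sum_{\nu_n\ge 2^{i+2}}\vartheta_n/\nu_n^{3/2}$ is exactly what makes $\sum_j m_j 2^{-|i-j|/2}$ converge. Your explicit treatment of the backward regime $\ell<i-1$ by symmetry is a small improvement in precision: the paper asserts the bound with $2^{-(j-i)/2}$ for both $j>i+1$ and $j<i-1$, which literally read would be increasing in $i-j$, so the absolute value you supply is the correct reading.
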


 \begin{proof}
  Let $i$, $j$ be positive integers such that either $j>i+1$ or $j<i-1$. Obviously
 \begin{eqnarray*} | \E Z_iZ_j|&\le &\sum_{2^i\le \nu_m<2^{i+1}\atop
2^j\le \nu_n<2^{j+1}}  {\vartheta_m\vartheta_n\over\s_m\sqrt{\nu_m}\s_n\sqrt{\nu_n}} \ |\E Y_nY_m|
\cr &\le &C_{D}  \, \sum_{2^i\le \nu_m<2^{i+1}\atop
2^j\le \nu_n<2^{j+1}}\ {\vartheta_m\vartheta_n\over \s_n\s_m\sqrt{\nu_m\nu_n}} \,   \sqrt{\nu_m\over \nu_n}
\cr & \le  &C_{D} \,  2^{-(j-i)/2}\Big( \sum_{2^i\le \nu_m<2^{i+1} }{\vartheta_m \over \s_m \sqrt{\nu_m}  }\Big) \Big(  \sum_{ 
2^j\le \nu_n<2^{j+1}}{ \vartheta_n\over  \s_n \nu_n   }\Big)  
\cr & =  &C_{D}  \, 2^{-(j-i)/2} m_im_j.
 \end{eqnarray*}
  Therefore,
 \begin{eqnarray*} \sum_{|j-i|>1}\frac{| \E Z_iZ_j|}{ m_im_j}&\le & C_{D} \, H.
 \end{eqnarray*}
    In view of \eqref{sum.squared.} and the assumptions made,
 \begin{align*}
 \E Z_i^2  &\,\le \, C_\t\,\Big(\sum_{2^{  i}  <\nu_m\le 2^{i+1} }  {\vartheta_m    \over \s_m  \sqrt{\nu_m }  }   \Big)\, = C_\t\,m_i.
   \end{align*}
Since  $m_i\asymp (1)$ we have,
  \begin{align*}
 \E \big(\frac{Z_i}{m_i}\big)^2  &\,\le \,  \frac{C_\t}{m_i}\le C_\t.  \end{align*}
Consequently
\begin{equation}\label{ZiZj.ortho.cond} \sup_{i\ge 1}\  \sum_{j\ge 1} | \E Z_iZ_j|  \le   C_{D} \, H.   \end{equation}
 
 This shows by using    criterion    \eqref{quasi.o.criterion}, that   the sequence $\{ {Z_i} ,i\ge 1\}$  is  a quasi-orthogonal system.

\end{proof}

 \vskip 5 pt 
 
 Note that (see Example  \ref{example4}-(i)) in the i.i.d. case, $h(x)  \equiv C $ (by Remark \ref{bdd.}), and condition \eqref{omega.star} holds. Further $$  m_i=\sum_{2^i\le \nu_m<2^{i+1} }{\vartheta_m\over \s_m\sqrt{\nu_m} }= \, \sum_{2^i\le \nu m<2^{i+1} }\frac{ \nu}{   m }={\rm const.}$$
Therefore we have
\begin{corollary}[i.i.d. case]\label{iid.qos} The corresponding sequence  $\{ {Z_i} ,i\ge 1\}$  is  a quasi-orthogonal system.
\end{corollary}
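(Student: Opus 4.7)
The plan is to invoke Theorem \ref{asllt.quasi.ortho} directly: it suffices to verify its three hypotheses -- namely $h(x)=\mathcal{O}(1)$, $m_i\asymp 1$, and the boundedness condition \eqref{omega.star} -- in the i.i.d. setting, and then the conclusion is immediate.

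First I would dispatch the boundedness of $h$. Taking $\vartheta_j=\vartheta_X>0$ for all $j$, we have $\sigma_n^2=n\sigma^2$ and $\nu_n=n\vartheta_X$, hence $\sigma_n^2/\nu_n=\sigma^2/\vartheta_X$ is constant in $n$. By the definition of $h$, this allows us to take $h(x)\equiv \sigma^2/\vartheta_X$, which is exactly the remark already made in Remark \ref{bdd.} and recalled just before the corollary. Next I would verify \eqref{omega.star}, which is precisely the content of Example \ref{example3.}: the computation there bounds
\begin{equation*}
\omega(m)=C\sum_{m<n<2m}\frac{1}{\sqrt{n}\sqrt{n-m}}\le \frac{C}{\sqrt{m}}\sum_{h=1}^{m}\frac{1}{\sqrt{h}}\le C,
\end{equation*}
so $\omega_X^*<\infty$.

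For $m_i\asymp 1$ I would compute directly: since $\nu_m=m\vartheta_X$ and $\sigma_m=\sigma\sqrt{m}$,
\begin{equation*}
m_i=\sum_{2^i\le m\vartheta_X<2^{i+1}}\frac{\vartheta_X}{\sigma\sqrt{m}\cdot\sqrt{m\vartheta_X}}=\frac{\sqrt{\vartheta_X}}{\sigma}\sum_{2^i/\vartheta_X\le m<2^{i+1}/\vartheta_X}\frac{1}{m},
\end{equation*}
and comparison with the harmonic series shows this quantity converges to $(\sqrt{\vartheta_X}/\sigma)\log 2$ as $i\to\infty$, so $m_i\asymp 1$. With the three hypotheses in hand, Theorem \ref{asllt.quasi.ortho} applies and yields the quasi-orthogonality of $\{Z_i,i\ge 1\}$.

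There is no genuine obstacle: the corollary is a transparent specialization of the general quasi-orthogonality theorem, and all bookkeeping reduces to substituting the i.i.d. parameters $\vartheta_j\equiv\vartheta_X$, $\sigma_n=\sigma\sqrt{n}$, $\nu_n=n\vartheta_X$ into estimates already proved. Alternatively, one could bypass Theorem \ref{asllt.quasi.ortho} and read the conclusion directly from the key bound \eqref{ZiZj.ortho.cond} in its proof, inserting the i.i.d. values to see that $\sup_{i}\sum_{j}|\mathbb{E} Z_iZ_j|$ is finite, and then apply the sufficient criterion \eqref{quasi.o.criterion} from Subsection \ref{sub2.3.qos}.
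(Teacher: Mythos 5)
Your proposal is correct and follows exactly the paper's route: the paper proves the corollary by the same verification that $h(x)\equiv C$ (via Remark \ref{bdd.}), that $\omega_X^*<\infty$ (via Example \ref{example3.}), and that $m_i\asymp 1$, after which Theorem \ref{asllt.quasi.ortho} applies immediately. Your computation of $m_i$ is just a slightly more explicit version of the paper's one-line evaluation, so the two arguments coincide.
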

This is no longer true when passing to subsequences! See Theorem \ref{inc.subseq}  and comments hereafter. \vskip 3 pt 

\vskip 3 pt 
Another consequence is 
   \begin{corollary}[a.s. convergent series and asllt]\label{qos.iid}   Let $b>3/2$. Then the series
\begin{eqnarray}\label{series.asllt}
  \sum_{j>1} {1\over   j^{1/2} (\log j)^{ b}}\ \Big( \sum_{m= 2^{j-1}}^{2^j -1}\frac {    {\bf 1}_{\{S_m=\kappa_m\}}-{\mathbb P}\{S_m=\kappa_m\} }{\sqrt m} \Big),\end{eqnarray}
converges   almost surely.   
  \vskip 3 pt
  Also,
 \begin{eqnarray}\label{series.asllt1}\lim_{N\to\infty}
  {1\over   N^{1/2} (\log N)^{ b} }\sum_{j=1}^{2^N} \frac {    {\bf 1}_{\{S_m=\kappa_m\}}-{\mathbb P}\{S_m=\kappa_m\} }{\sqrt m}= 0,  
  \end{eqnarray}
   almost surely.   
 \vskip 2 pt  Further,  
$$ \lim_{ N\to \infty}{1\over    \log N } \sum_{ n\le
N}  {  1 \over \sqrt n} {\bf 1}_{\{S_n=\kappa_n\}} \buildrel{a.s.}\over {=}{D\over
\sqrt{ 2\pi}\s}e^{-  {\k^2/ ( 2\s^2 ) } },$$
  {\it for any  sequence of integers $\k_n\in \mathcal L(nv_0,D)$, $n=1,2,\ldots$      such that \eqref{eq2} holds.} 
  \end{corollary}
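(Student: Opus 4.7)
The plan is to combine the quasi-orthogonality established in Corollary \ref{iid.qos} with the classical Rademacher--Menshov theorem and Kronecker's lemma. In the i.i.d.\ setting $\vartheta_n\equiv\vartheta=\t_{X_1}$, $\nu_n=n\vartheta$, and $\s_n=\s\sqrt n$, so a direct substitution in \eqref{zi} shows that the dyadic block $Z_i$ equals, up to the bounded positive factor $\sqrt\vartheta/\s$ and a harmless relabelling of the dyadic boundaries (the sum in \eqref{zi} runs over $\{n:2^i\le n\vartheta<2^{i+1}\}$, which is a constant-factor shift of $[2^{j-1},2^j)$), the block
\begin{equation*}
A_j \;:=\; \sum_{m=2^{j-1}}^{2^j -1}\frac{{\bf 1}_{\{S_m=\k_m\}}-{\mathbb P}\{S_m=\k_m\}}{\sqrt m}.
\end{equation*}
Consequently $\{A_j\}_{j\ge 1}$ inherits the quasi-orthogonality of $\{Z_i\}$ in $L^2(\P)$.

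First I apply Rademacher--Menshov (Theorem \ref{Ra.Me}), which is valid for quasi-orthogonal systems through \eqref{quasi.o.caracterisation}, with coefficients $c_j=j^{-1/2}(\log j)^{-b}$; the hypothesis $\sum_j c_j^{2}\log^{2}j=\sum_j j^{-1}(\log j)^{2-2b}<\infty$ holds \emph{precisely} when $b>3/2$, yielding the a.s.\ convergence of the series \eqref{series.asllt}. Kronecker's lemma with the increasing weights $B_j=j^{1/2}(\log j)^b\uparrow\infty$ next converts this into $B_N^{-1}\sum_{j\le N}A_j\to 0$ a.s., which after rewriting $\sum_{j\le N}A_j=\sum_{m<2^N}({\bf 1}_{\{S_m=\k_m\}}-{\mathbb P}\{S_m=\k_m\})/\sqrt m$ is exactly \eqref{series.asllt1}. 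As a byproduct, the general term of the convergent series tends to zero a.s., so $A_j=o\bigl(j^{1/2}(\log j)^b\bigr)$ a.s.

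For the ASLLT I set $J=\lceil\log_2(N+1)\rceil\asymp\log N$ and split $\sum_{n\le N}=\sum_{n<2^{J-1}}+R_N$ with $|R_N|\le|A_J|$. Both $\bigl|\sum_{j<J}A_j\bigr|$ (by the scaled form of \eqref{series.asllt1}) and $|A_J|$ are $o\bigl(J^{1/2}(\log J)^b\bigr)=o\bigl((\log N)^{1/2}(\log\log N)^b\bigr)=o(\log N)$ a.s. Hence
\begin{equation*}
\sum_{n\le N}\frac{{\bf 1}_{\{S_n=\k_n\}}-{\mathbb P}\{S_n=\k_n\}}{\sqrt n}\,=\,o(\log N)\quad\text{a.s.}
\end{equation*}
Combining this with the LLT \eqref{llt1}, which states $\sqrt n\,{\mathbb P}\{S_n=\k_n\}\to (D/(\s\sqrt{2\pi}))e^{-\k^2/(2\s^2)}$, and thus by Ces\`aro summation $\sum_{n\le N}{\mathbb P}\{S_n=\k_n\}/\sqrt n\sim (D/(\s\sqrt{2\pi}))e^{-\k^2/(2\s^2)}\log N$, and dividing by $\log N$, yields the claimed ASLLT.

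The main obstacle is bridging the coarse block-scale bound \eqref{series.asllt1} --- whose normaliser $N^{1/2}(\log N)^b$ is indexed at the dyadic-block scale --- to the fine pointwise $\log N$ normalisation of the ASLLT. The key observation is the scale change $N\mapsto 2^J$: at the dyadic level $J\asymp\log N$, so the Kronecker slack $J^{1/2}(\log J)^b$ becomes $(\log N)^{1/2}(\log\log N)^b$, comfortably negligible against $\log N$. A minor bookkeeping point --- that $Z_i$ is indexed by $\{n\vartheta\in[2^i,2^{i+1})\}$ rather than directly by dyadic intervals in $n$ --- only shifts block boundaries by a constant factor, and affects neither the quasi-orthogonality nor any asymptotic estimate above.
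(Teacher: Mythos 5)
Your treatment of \eqref{series.asllt} and \eqref{series.asllt1} matches the paper's exactly: quasi-orthogonality of the blocks (Corollary \ref{iid.qos}), then Rademacher--Menshov with $c_j = j^{-1/2}(\log j)^{-b}$, then Kronecker. The minor bookkeeping you mention regarding the $\nu$-scaled block boundaries versus straight dyadic boundaries is harmless and you handle it reasonably.

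The final step, however, has a genuine gap. You split $\sum_{n\le N} = \sum_{n<2^{J-1}} + R_N$ and claim $|R_N|\le|A_J|$. This is false: $R_N$ is a \emph{partial} sum of the centered block $A_J=\sum_{m=2^{J-1}}^{2^J-1}({\bf 1}_{\{S_m=\k_m\}}-\P\{S_m=\k_m\})/\sqrt m$, and partial sums of a signed (indeed centered) series are in no way dominated by the full block sum --- $A_J$ could be small by cancellation while an intermediate $R_N$ is large. The trivial pointwise bound only gives $|R_N|\le C\,2^{J/2}$, far larger than the $O(J)\asymp O(\log N)$ you need. To control $\sup_{N\in[2^{J-1},2^J)}|R_N|$ you would need a maximal inequality across the block (this is precisely what Theorem \ref{8.2} supplies via Lemma \ref{metrical.bounds}), and you have not invoked one.

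The paper bypasses this obstacle entirely by exploiting the sign of the terms. Write $g(N)=\sum_{n\le N}n^{-1/2}{\bf 1}_{\{S_n=\k_n\}}$; this $g$ is \emph{nondecreasing}. From Kronecker one first proves $g(R^k)/\log R^k\to L$ a.s.\ along every $R$-geometric sequence (Remark \ref{R.geometric.blocks} gives the estimates for arbitrary $R>1$), and then, because $g$ is monotone, one sandwiches $g(R^k)/\log R^{k+1}\le g(N)/\log N\le g(R^{k+1})/\log R^k$ for $R^k\le N<R^{k+1}$ and lets $k\to\infty$; this is the content of the final ``elementary Lemma'' in the paper. You never invoke the nonnegativity of the indicators in this way, so the interpolation between dyadic times and general $N$ is not justified in your argument.
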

    The first claim (\cite{W3}) cannot be derived from  G\'al--Koksma's criterion (\cite{PS}, p.\,134), unlike to the second one. The third one is Theorem \ref{t1[asllt]..}.

 \begin{proof} It is very short. 
As Rademacher--Menchov's Theorem applies to quasi-orthogonal systems, the   series
\begin{eqnarray*}
  \sum_j {Z_j\over   j^{1/2} (\log j)^{ b} } \end{eqnarray*}
 converges   almost surely if $b>3/2$. 
  By Kronecker's Lemma,
 ${1\over   N^{1/2} (\log N)^{ b} }\sum_{j=1}^N Z_j \ \to 0,
 $ as $N$ tends to infinity, almost surely, whence the second claim.  
  By arguing as in  Remark \ref{R.geometric.blocks}, this remains true along any $R$-geometric sequence, $R>1$. Now   by  Gnedenko's Theorem \eqref{llt.iid},  if
$ \k_n \in \mathcal L(nv_0,D)$ is a sequence which   verifies condition (\ref{eq2}), then \eqref{llt1} holds. The proof is achieved by applying  the   following well-known elementary Lemma.\begin{lemma}Let $\{f_n,n\ge 1\}$ be a sequence of nonnegative
numbers.   If for each $r>1$, the  averages
$  r^{-k}\sum_{n\le r^k}f_n$
converges, as $k\to \infty$.
Then for each $r>1$ this limit is the same, call it $L$, and we have
$\lim_{N\to\infty}N^{-1}\sum_{n\le N}f_n=L$.
\end{lemma}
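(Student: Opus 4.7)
The plan is to exploit the non-negativity of $\{f_n\}$ via a sandwich over consecutive powers of $r$. First, fix $r>1$ arbitrary. For each large integer $N$, let $k=k(N)$ be the unique integer satisfying $r^k\le N<r^{k+1}$. Since $f_n\ge 0$, partial sums are monotone, so
\begin{equation*}
\sum_{n\le r^k} f_n \,\le\, \sum_{n\le N} f_n \,\le\, \sum_{n\le r^{k+1}} f_n.
\end{equation*}
Dividing by $N$, using $r^k/N\ge 1/r$ on the left and $r^{k+1}/N\le r$ on the right, then letting $N\to\infty$ (hence $k\to\infty$), the hypothesis $r^{-k}\sum_{n\le r^k} f_n\to L_r$ yields
\begin{equation*}
\frac{L_r}{r}\,\le\,\liminf_{N\to\infty}\frac{1}{N}\sum_{n\le N} f_n\,\le\,\limsup_{N\to\infty}\frac{1}{N}\sum_{n\le N} f_n\,\le\, r\,L_r.
\end{equation*}

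Second, rearranging gives $\limsup/r \le L_r \le r\cdot\liminf$, hence $\limsup \le r^2\,\liminf$ for every $r>1$. Letting $r\to 1^+$ forces $\limsup=\liminf$, so the Cesaro limit
\begin{equation*}
L:=\lim_{N\to\infty}\frac{1}{N}\sum_{n\le N} f_n
\end{equation*}
exists in $[0,\infty]$.

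Third, I would identify $L_r$ with $L$ for every $r>1$. Along the integer subsequence $N_k=\lfloor r^k\rfloor$ one has $N_k^{-1}\sum_{n\le N_k} f_n \to L$, and since $r^{-k}N_k\to 1$, the identity $r^{-k}\sum_{n\le r^k} f_n=(r^{-k}N_k)\cdot N_k^{-1}\sum_{n\le N_k} f_n$ forces $L_r=L$, independently of $r$. There is essentially no serious obstacle: the argument is a clean two-sided envelope made possible by non-negativity. The only subtlety worth flagging is that the sandwich step fails without the sign assumption, since the intermediate sum could otherwise overshoot either bound, so the monotonicity of $N\mapsto \sum_{n\le N} f_n$ is genuinely used.
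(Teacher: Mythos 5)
Your proof is correct, and it is the natural argument. Note that the paper itself merely quotes this as a ``well-known elementary Lemma'' without supplying a proof, so there is no reference argument to compare against; your sandwich-over-geometric-blocks reasoning is exactly the expected one. The two-sided bound
\[
\frac{L_r}{r}\le \liminf_{N\to\infty}\frac1N\sum_{n\le N}f_n\le \limsup_{N\to\infty}\frac1N\sum_{n\le N}f_n\le rL_r
\]
is obtained correctly from the monotonicity of the partial sums, the deduction $\limsup\le r^2\liminf$ and the passage $r\to1^+$ are valid (with $\limsup<\infty$ guaranteed by $\limsup\le rL_r$), and the final identification $L_r=L$ via the subsequence $N_k=\lfloor r^k\rfloor$ is clean. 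You are also right to flag that nonnegativity is what makes the monotone sandwich legitimate; without it the lemma fails. One could tighten the exposition slightly by dealing explicitly with the degenerate case $L_r=0$ for some $r$ (then $\limsup\le rL_r=0$ and everything vanishes), but your argument already covers it implicitly, so this is a presentational remark rather than a gap.
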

   \end{proof}

\vskip 5 pt We pass to another example.
\begin{corollary}[Cram\'er model] Let   $\xi=\{\xi_j, j\ge 1\}$ be the sequence defined in Example \ref{cramer}. Then the corresponding sequence  $\{ {Z_i} ,i\ge 1\}$  is  a quasi-orthogonal system.
\end{corollary}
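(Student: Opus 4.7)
\medskip

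The plan is to derive this Corollary as a direct application of Theorem \ref{asllt.quasi.ortho}, which asserts the quasi-orthogonality of $\{Z_i, i\ge 1\}$ as soon as three hypotheses are fulfilled: $h(x)=\mathcal O(1)$, $m_i\asymp 1$, and condition \eqref{omega.star}. I would thus verify these three conditions in sequence for the Cram\'er sequence $\xi$.

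Two of the three hypotheses are already recorded in Example \ref{cramer}. Indeed, it is established there that $\sigma_n^2\sim \nu_n\sim n/\log n$, from which $h(n)=\max_{m\le n}(\sigma_m^2/\nu_m)=\mathcal O(1)$ follows at once. Moreover the same example computes $\omega_\xi^\ast<\infty$, which is precisely \eqref{omega.star}. So the whole substance of the proof is to check $m_i\asymp 1$.

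To carry out the latter, I would use the estimates of Example \ref{cramer}: $\vartheta_n=1/\log n$ and $\sigma_n\asymp \sqrt{n/\log n}$, which combine to
\[
\frac{\vartheta_n}{\sigma_n\sqrt{\nu_n}}\,\asymp\, \frac{1/\log n}{\sqrt{n/\log n}\,\sqrt{n/\log n}}\,=\,\frac{1}{n}.
\]
Next I would describe the block $\mathcal B_i=\{n:2^i\le \nu_n<2^{i+1}\}$ as a range of consecutive integers $[N_i,N_{i+1})$. Since $\nu_n\sim n/\log n$ is regularly varying of index $1$ (up to the slowly varying factor $1/\log n$), the ratio $N_{i+1}/N_i$ stays bounded above (by say $3$) and bounded below by some $1+c$ with $c>0$ for all large $i$. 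Consequently
\[
m_i\,=\,\sum_{n\in\mathcal B_i}\frac{\vartheta_n}{\sigma_n\sqrt{\nu_n}}\,\asymp\,\sum_{N_i\le n<N_{i+1}}\frac{1}{n}\,\asymp\,\log\frac{N_{i+1}}{N_i}\,\asymp\,1,
\]
giving the required two-sided bound $m_i\asymp 1$. With the three hypotheses in place, Theorem \ref{asllt.quasi.ortho} applies and the conclusion follows.

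The main (and only real) obstacle is the lower bound $m_i\gtrsim 1$; the upper bound is immediate from $\sum_{\nu_n<2^{i+1}}\vartheta_n/(\sigma_n\sqrt{\nu_n})\lesssim 2^{i+1}\cdot 2^{-i}$ via Lemma \ref{hlp}, but the lower bound demands that the block $\mathcal B_i$ contain a substantial portion of the interval $[N_i,N_{i+1})$, i.e.\ that consecutive values of $\nu_n$ do not jump across a whole dyadic level. This is ensured by the fact that $\vartheta_n=1/\log n\to 0$, so the increments $\nu_n-\nu_{n-1}$ are $o(1)$, and in particular much smaller than $2^i$ for $n\asymp N_i$. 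This is where the proof has the most delicate step, although it is a short quantitative check rather than a conceptual difficulty.
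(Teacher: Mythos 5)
Your proof takes essentially the same route as the paper's: reduce the claim to verifying the three hypotheses of Theorem \ref{asllt.quasi.ortho}, read off $h(x)=\mathcal O(1)$ and $\o_\xi^*<\infty$ from the data recorded in Example \ref{cramer}, and then confirm $m_i\asymp 1$ by computing $\vartheta_n/(\s_n\sqrt{\nu_n})\asymp 1/n$ and summing over the dyadic block in $\nu$. The paper carries out the $m_i$ step by translating $2^i\le\nu_n<2^{i+1}$ directly into $i2^i\lesssim n\lesssim (i+1)2^{i+1}$ and computing $m_i\sim\log\frac{i+1}{i}+\log 2\to\log 2$; your version phrases the same estimate via a bounded ratio $N_{i+1}/N_i$ and supplements it with a short remark on why $\nu_n-\nu_{n-1}=\vartheta_n\to 0$ guarantees the lower bound, which the paper leaves implicit in its explicit calculation. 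Both are correct and substantively identical.
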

\begin{proof}Recall  that $\t_m=\frac{1}{\log m}$, and  $\s_m^2\sim\nu_m\sim \frac{m}{\log m}$, as $m\to \infty$. Thus $h(x)  \equiv C $, further 
$$  m_i\sim \sum_{2^i\le \nu_m<2^{i+1} }{  \log m \over (\log m) m }\sim \sum_{2^i\le\frac{m}{\log m}<2^{i+1} }{1\over m } $$$$\sim  \sum_{i2^i\le m<(i+1) 2^{i+1} }{1\over m }\sim  \log (i+1) + (i+1) \log 2-  \log i  -i \log 2\sim {\rm const.} $$
\end{proof}


\section{\gsec  REVISITING THE ASLLT IN THE I.\,I.\,D. 
CASE
}\label{sub2..1}
The goal of this Section is to prove that in the i.\,i.\,d. case, an almost sure local limit theorem with explicit speed of convergence (Theorem \ref{asllt.speed.iid}) expressed with an a.s. convergent series. 
Let $X$ be an  $\mathcal L(v_0,D)$-valued square integrable  random variable, with maximal span $D$, $\m ={\mathbb E\,} X$,
$\s^2={\rm Var} (X)>0$. Assume that $\t_{X}>0$ and choose $\t =\t_{X }$.  Let  
$S_n=X_1+\ldots +X_n$,     $ X_k   $  being independent copies of
$X$.  
  Thus $\nu_n= \t n$, $\s_n= \s  \sqrt{  n}$ and assumptions \ref{nun} and \ref{midiv} are satisfied. 
 \vskip 1 pt 
 Let  $\k_j\in \mathcal L(jv_0,D)$, $j=1,2,\ldots$   be   such that assumption \eqref{nun.ka.iid} is satisfied. 
The system of random variables   \eqref{cor.syst} writes: 
 \beq\label{cor.syst.} {\vartheta_nY_n\over  
 \s_n\sqrt{\nu_n}}  = {\t Y_n\over  
 n}  = \t\ { {\bf 1}_{\{S_n=\kappa_n\}}-{\mathbb P}\{S_n=\kappa_n\}  \over  
  \sqrt n}    , \qq\quad n\ge 1,
\eeq
 \beq\label{Zi}Z_i  = \sum_{m= 2^{i-1}}^{2^i -1}\frac {Y_m}{m}=\s \sum_{m= 2^{i-1}}^{2^i -1}\frac {    {\bf 1}_{\{S_m=\kappa_m\}}-{\mathbb P}\{S_m=\kappa_m\} }{\sqrt m}, \qq\quad i\ge 1.
\eeq

A natural question is whether it is possible in   \eqref{series.asllt} to substitute to ${\mathbb P}\{S_m=\kappa_m\}$,  its limit ${D\over  \sqrt{ 2\pi}\s}e^{-
{ \k ^2/2   \s^2} }$.    Let \begin{equation}\label{llt1delta.n} \d_n=\Big| \s\sqrt n {\mathbb P}\{S_n=\k_n\}-{D\over  \sqrt{ 2\pi} }e^{-
{ (\k_n-n\m) ^2\over  2 n  \s^2} }\Big|.
\end{equation} By Gnedenko's Theorem \eqref{llt.iid}, $\d_n=o(1)$.  Recall that   $\t_X$ is defined in \eqref{vartheta}.  
\begin{theorem}\label{asllt.speed.iid}  Assume that  $\t_X >0$.    
 Let $  \k_n= n\m  +   \k\sqrt n(1+\e_n)$, $\e_n\to\, 0$.
\vskip 3 pt {\rm (i)} We have 
\begin{eqnarray*}
\frac{\sum_{  1\le n<2^{N+1} }  \frac{1}{\sqrt n } {\bf 1}_{\{S_n=\kappa_n\}}}{   \sum_{  1\le n<2^{N+1} }\frac{1}{  n } }=  {D\over  \s \sqrt{ 2\pi} }e^{-
{  \k  ^2\over  2   \s^2} }  \ +\mathcal O\Big({1\over   N    } \sum_{  1\le n<2^{N+1} }\frac{\d_n+\e_n}{n } \Big)
  + o\Big({     (\log N)^{ b }\over  N^{1/2} }\Big).
\end{eqnarray*}

{\rm (ii)} Assume that 
\begin{eqnarray*}\sum_j {1\over   j^{1/2} (\log j)^{ b} }\,\sum_{2^j\le n<2^{j+1}}\frac{\d_n+\e_n}{n }<\infty.
\end{eqnarray*}
This is fulfilled if $\max(\d_n,\e_n)=\mathcal O\big( \log^{-1/2}n \big)$. Then
the series 
$$\sum_j {1\over   j^{1/2} (\log j)^{ b} } \bigg\{ \sum_{2^j\le n<2^{j+1}}\frac{1}{\sqrt n } {\bf 1}_{\{S_n=\kappa_n\}}  
   - \Big({D \log 2\over  \s \sqrt{ 2\pi} }e^{-
{  \k  ^2\over  2   \s^2} }\Big)\bigg\}$$
converges   almost surely. 
\end{theorem}
\begin{proof}

 We have 
 $$\Big| e^{-
{ (\k_n-n\m) ^2\over  2 n  \s^2} }
- e^{-{  \k  ^2\over  2   \s^2} }
\Big|\le  \frac{1}{2\s^2}\big|{ (\k_n-n\m) ^2\over    n   } -    \k  ^2 \big|= \frac{\k  ^2 }{2\s^2}\big|  (1+\e_n) ^2 -     1\big| \le  \frac{3\k  ^2  }{2\s^2}\,\e_n.$$
Thus \begin{eqnarray*}
& & \Big| \s\sqrt n {\mathbb P}\{S_n=\k_n\}-{D\over  \sqrt{ 2\pi} }e^{-
{  \k  ^2\over  2   \s^2} } \Big|
\cr &\le &\Big| \s\sqrt n {\mathbb P}\{S_n=\k_n\}-{D\over  \sqrt{ 2\pi} }e^{-
{  \k  ^2\over  2   \s^2} }e^{-
{ (\k_n-n\m) ^2\over  2 n  \s^2} }\Big| +   \frac{3D\k  ^2  }{2\sqrt{ 2\pi} \s^2}\,\e_n
\ \le \ \d_n + \frac{3\k  ^2  }{2\s^2}\,\e_n.
\end{eqnarray*}
 
So that
\begin{eqnarray*}\frac{Y_n}{n}&=&\frac{1}{n}\Big( \sqrt n \Big\{{\bf 1}_{\{S_n=\kappa_n\}}-{\mathbb P}\{S_n=\kappa_n\} \Big\}\Big) \,=\,
\frac{1}{\sqrt n } {\bf 1}_{\{S_n=\kappa_n\}}  -\frac{\sqrt n{\mathbb P}\{S_n=\kappa_n\}}{n } 
\cr &=&  \frac{1}{\sqrt n } {\bf 1}_{\{S_n=\kappa_n\}}  -  \Big({D\over  \s \sqrt{ 2\pi} }e^{-
{  \k  ^2\over  2   \s^2} }\Big)\, \frac{1}{n }+\mathcal O\big(\frac{\d_n+\e_n}{n }\big).
 \end{eqnarray*}


(i) By Theorem \ref{qos.iid},  if $b>3/2$,
$${1\over   N^{1/2} (\log N)^{ b} }\sum_{j=1}^N Z_j \ \to 0,
$$ as $N$ tends to infinity, almost surely. 
But  
$${1\over   N^{1/2} (\log N)^{ b} }\sum_{j=1}^N Z_j  ={1\over   N^{1/2} (\log N)^{ b} } \sum_{  1\le n<2^{N+1} }{Y_n\over n}$$
$$={1\over   N^{1/2} (\log N)^{ b} } \sum_{  1\le n<2^{N+1} }\Big(\frac{1}{\sqrt n } {\bf 1}_{\{S_n=\kappa_n\}}  -  \Big({D\over  \s \sqrt{ 2\pi} }e^{-
{  \k  ^2\over  2   \s^2} }\Big)\, \frac{1}{n }+\mathcal O\big(\frac{\d_n+\e_n}{n }\big)\Big)$$
 $$={1\over   N^{1/2} (\log N)^{ b} } \Big(  \sum_{  1\le n<2^{N+1} }\frac{1}{  n }\Big)\bigg\{\frac{\sum_{  1\le n<2^{N+1} }\frac{1}{\sqrt n } {\bf 1}_{\{S_n=\kappa_n\}}}{   \sum_{  1\le n<2^{N+1} }\frac{1}{  n } }   -  \Big({D\over  \s \sqrt{ 2\pi} }e^{-
{  \k  ^2\over  2   \s^2} }\Big)\bigg\} $$ $$+\mathcal O\Big({1\over   N^{1/2} (\log N)^{ b} } \sum_{  1\le n<2^{N+1} }\frac{\d_n+\e_n}{n } \Big)\ \to 0,
$$ as $N$ tends to infinity, almost surely. 
 Therefore ($ \sum_{  1\le n<2^{N+1} }\frac{1}{  n }\sim C N$),
\begin{eqnarray*}
\frac{\sum_{  1\le n<2^{N+1} }  \frac{1}{\sqrt n } {\bf 1}_{\{S_n=\kappa_n\}}}{   \sum_{  1\le n<2^{N+1} }\frac{1}{  n } }=  {D\over  \s \sqrt{ 2\pi} }e^{-
{  \k  ^2\over  2   \s^2} }  \ +\mathcal O\Big({1\over   N    } \sum_{  1\le n<2^{N+1} }\frac{\d_n+\e_n}{n } \Big)
  + o\Big({     (\log N)^{ b }\over  N^{1/2} }\Big).
\end{eqnarray*}
 \vskip 3 pt 
\vskip 3 pt 
(ii) Now assume that 
\begin{eqnarray*}\sum_j {1\over   j^{1/2} (\log j)^{ b} }\,\sum_{2^j\le n<2^{j+1}}\frac{\d_n+\e_n}{n }<\infty.
\end{eqnarray*}
 
Then the series 
$$\sum_j {1\over   j^{1/2} (\log j)^{ b} } \bigg\{ \sum_{2^j\le n<2^{j+1}}\frac{1}{\sqrt n } {\bf 1}_{\{S_n=\kappa_n\}}  
   - \Big({D \log 2\over  \s \sqrt{ 2\pi} }e^{-
{  \k  ^2\over  2   \s^2} }\Big)\bigg\}$$
converges   almost surely. 

\vskip 3 pt
Indeed, first the series
$$\sum_j {1\over   j^{1/2} (\log j)^{ b} } \bigg\{ \sum_{2^j\le n<2^{j+1}}\frac{1}{\sqrt n } {\bf 1}_{\{S_n=\kappa_n\}}  
   - \Big(\sum_{2^j\le n<2^{j+1}}\frac{1}{n }\Big) \Big({D\over  \s \sqrt{ 2\pi} }e^{-
{  \k  ^2\over  2   \s^2} }\Big)\bigg\}$$
converges   almost surely, since  
\begin{eqnarray*}
& &\sum_j {Z_j\over   j^{1/2} (\log j)^{ b} } \,=\, \sum_j {1\over   j^{1/2} (\log j)^{ b} }\sum_{2^j\le n<2^{j+1}}{Y_n\over n}
\cr &=& \sum_j {1\over   j^{1/2} (\log j)^{ b} }\bigg\{ \sum_{2^j\le n<2^{j+1}}\frac{1}{\sqrt n } {\bf 1}_{\{S_n=\kappa_n\}}   
   - \Big(\sum_{2^j\le n<2^{j+1}}\frac{1}{n }\Big) \Big({D\over  \s \sqrt{ 2\pi} }e^{-
{  \k  ^2\over  2   \s^2} }\Big)\bigg\}
 \cr & & \quad  +\mathcal O\Big(\sum_j {1\over   j^{1/2} (\log j)^{ b} }\,\sum_{2^j\le n<2^{j+1}}\frac{\d_n+\e_n}{n }\Big). \end{eqnarray*}  
Besides 
$$ \sum_{2^j\le n<2^{j+1}}\frac{1}{ n }=\log 2+ \mathcal O\big(2^{-j}\big),$$
so that the simplified series 
$$\sum_j {1\over   j^{1/2} (\log j)^{ b} } \bigg\{ \sum_{2^j\le n<2^{j+1}}\frac{1}{\sqrt n } {\bf 1}_{\{S_n=\kappa_n\}}  
   - \Big({D \log 2\over  \s \sqrt{ 2\pi} }e^{-
{  \k  ^2\over  2   \s^2} }\Big)\bigg\}$$
converges   almost surely. 

\end{proof}


  \section{\gsec CONCLUDING REMARKS}\label{s6}
In a previous attempt, for proving the ASLLT (Theorem \ref{t1[asllt].}), we considered slightly different block sums, 
$$ Z_i=\sum_{2^i\le n<2^{i+1}}{Y_n\over  
 \s_n\sqrt{\nu_n}} , $$
 and operated differently in order the control their  covariances; in particular   the diagonal case was not treated separately. 
   We bounded $| \E Z_iZ_j|$ from above, by proceeding as follows: for $1\le i\le j $, \begin{eqnarray*} | \E Z_iZ_j|&\le &\sum_{{2^i\le m<2^{i+1}\atop
2^j\le n<2^{j+1}}\atop 1\le \nu_m\le c\nu_n}  {|\E Y_nY_m|\over\s_m\sqrt{\nu_m}\s_n\sqrt{\nu_n}}+ \sum_{{2^i\le m<2^{i+1}\atop
2^j\le n<2^{j+1}}\atop   \nu_m> c\nu_n}  {|\E Y_nY_m|\over\s_m\sqrt{\nu_m}\s_n\sqrt{\nu_n}}.\end{eqnarray*}
 the first  sum can be estimated efficiently, the estimation of the second imposed restrictive assumptions. We have not tried any further, and swiched to the one implemented in this paper. Maybe the preceding approach can still be improved, at the price of some new argument, similar to the ones used here. We don't believe it would bring more.


 \vskip 9 pt 



\begin{thebibliography}{999}

 
  \bibitem{A}    Alexits, G.,    \emph{Convergence problems of
 orthogonal
series}.
Internat. Ser. Monogr.   Pure   Appl. Math. {\bf 20}, (1961), 
 Pergamon Press, New York, Oxford, Paris.
  
  \bibitem{Br}   Breuillard E.,  \emph{Distribution diophantiennes et th\'eor\`eme  limite local sur $\R^d$},
{Probab. Theory Related Fields} {\bf 132} (2005), 39--73.

\bibitem{Bro} A. Broise,   Transformations dilatantes de l'intervalle et th\'eor\`emes limites,  \emph{Ast\'erisque}  {\bf 238} (1996), 1--109.


\bibitem{CCC}    Calderoni P.,   Campanino M. and   Capocaccia D., 
\emph{A local limit theorem for a sequence of interval transformations,}
{Ergodic Theory Dynam. Systems} {\bf 5} (1985), 185--201.

 \bibitem{DMD}    Dabrowski A. R. and McDonald, D.,
 \emph{An application of the Bernoulli part to local limit theorems for moving averages on stationary sequences,} {The Canadian Journal of Statistics} {\bf 24}  3 (1996), 293--305.


     \bibitem{DM} Davis, B., and MacDonald, D.,
An elementary proof of the local central limit theorem,
\emph{J. Theoretical Prob.} {\bf 8} (1995), no.\ 3, 695--701.

 \bibitem{DK}{Denker, M., and Koch, S.},
Almost sure local limit theorems,
  \emph{Statistica Neerlandica} {\bf 56} (2002), no.\ 2, 143--151.

 \bibitem{Dolgo}   Dolgopyat D., \emph{A local limit theorem for sums of independent random vectors,} {Electron. J. Probab.} {21} 39 (2016), 1--15.

 \bibitem{DH}    Dolgopyat D. and   Hafouta Y.,  \emph{Edgeworth expansions for independent bounded integer valued random variables,}
{Stoch. Process. Appl.}  {152} (2022),  486--532.
 
  
 
 \bibitem{D1}   Doney R. A.,  \emph{A large deviation local limit theorem,}
{Math. Proc. Cambridge Philos. Soc.} {105} (1989), 575--577.

\bibitem{D}   Doney, R. A.  \emph{One-sided local large deviation and renewal theorems in the case of infinite mean,}
{Probab. Theory Related Fields}  {107} (1997), 461--465.

 \bibitem{D2}  Doney, R. A.  \emph{A local limit theorem for moderate deviations,}  {Bull. London Math. Soc.}  {33} (2001), 100--108.
 
 \bibitem{G2a}       Gamkrelidze N. G.,
\emph{ On an inequality for multidimensional characteristic function,}
{Theor. Probab. Appl.} {30} 1 (2001), 133--135.

\bibitem{Gam80}    Gamkrelidze N. G., \emph{On One Inequality for Characteristic Functions,} in:  Yu. V.  Prokhorov, 
V. Statulevi\v{c}ius, (eds.), {Limit Theorems of Probability Theory}, Springer, 2000, 275--280.
 
 \bibitem{Gam3}    Gamkrelidze N. G.,   \emph{On a local limit theorem in strong sense,}
 {Statist. Probab. Lett.} {35} (1997),   79--83.
 
 \bibitem{Gam4}   Gamkrelidze N. G.,   \emph{On a  probabilistic property of the Fibonacci sequence,}  {The Fibonacci Quartely} { 33} 2 (1995), 147--152. 

\bibitem{G2}    N. G.  Gamkrelidze,
\emph{A measure of \lq\lq smoothness\rq\rq  of multidimensional distributions of integer-valued random vectors,}
{Theor. Probab. Appl.} {30} 2 (1985), 427--431.  
 
 \bibitem{GS} Giuliano, R., and Szewczak, Z. S.,
 A general correlation inequality and the Almost Sure Local Limit Theorem for random sequences in
the domain of attraction of a stable law, \emph{Stoch. Proc. Appl.} {\bf 124} (2014),  1612--1626.


\bibitem{GS13}
Giuliano-Antonini, R. and  Szewczak, Z. S., An almost sure local limit theorem for Markov chains,
 \emph{Statist. Probab. Lett.}
{\bf 83} (2013) 2, 573--579.


\bibitem{GW2.}  Giuliano, R., and Weber, M.,
Local limit theorems in some random models from Number Theory, \emph{Stoch. Anal. \& Appl.}  {\bf  34} (2016), no.\  6,  941--960.


\bibitem{GW2}  Giuliano, R., and Weber, M.,
Almost sure local limit theorems with rate, \emph{Stoch. Anal. \& Appl.}  {\bf 29} (2011), no.\ 5, 779--798.


\bibitem{GW3}  Giuliano, R., and Weber, M.,
Approximate local limit theorems with effective rate and application to random walks in random scenery,
 \emph{Bernoulli} {\bf 23} (2017) (4B), 3268--3310.



\bibitem{G}  Gnedenko, B. V.,  On a local limit theorem in the theory of probability, \emph{Uspekhi Mat. Nauk.} (N.S.) {\bf 3}  (1948), no.\ 3(25), 187--194.

 \bibitem{G1}  {Gnedenko B.V.},       \emph{Course in the Theory of
Probability}, 5th Ed. "Nauka", Moscow, English transl. of 4th ed., (1967) Chelsea, New-York.  


\bibitem{Gou}  Gou\"ezel S., 
 \emph{Berry-Esseen theorem and local limit theorem for non uniformly expanding maps,} {Ann. I. H. Poincar\'e}--PR { 41} (2005), 997--1024.

  
 \bibitem{Sz22}  Szewczak Z. S.,
\emph{A. de Moivre theorem revisited,}
{Statist. Probab. Lett.} {181} (2022) 109--260.

\bibitem{HLP} {Hardy  G.,    Littlewood J. E., P\'olya G.},    \emph{Inequalities}, (1934).
Cambridge Math. Library.

\bibitem {HK16}   Hafouta Y.  and  Kifer Y.,
\emph{A nonconventional local limit theorem,} {J. Theoret. Probab.} {29} (2016), 1524--1553.
 
 \bibitem {HK18}    Hafouta Y.  and   Kifer Y.,
\emph{Nonconventional limit theorems and random dynamics,} World Scientific, 2018.
\bibitem{IL} {Ibragimov  I.A.,    Linnik Y.V.},    \emph{Independent and Stationary Sequences of Random Variables}, (1971)
Wolters-Noordhoff Publishing Groningen, The Netherlands.

\bibitem{JKN}    Jacod J.,     Kowalski E. and Nikeghbali  A.,  \emph{Mod-Gaussian convergence: new limit theorems in probability and number theory,} {Forum Math.}  {23} 4 (2011),  835--873.

\bibitem{KS}  Kashin, B. S., and Saakyan, A. A.  (1989)
\emph{Orthogonal series}, Transl.
Math. Monogr. {\bf 75}, Amer. Math. Soc., Providence, R.I.
 

 \bibitem{KSZ}   Kac, M.,  Salem, R.,  and  Zygmund, A., 
{\rm  A gap theorem}, \emph{Trans. Amer. Math. Soc.}  {\bf 63} (1948), 235--243.


\bibitem{LP} Lacey, M.\ and Philipp, W.,  A note on the almost everywhere central limit theorem. Stat. Prob. Letters 9 (1990), 201-205.

 \bibitem{di}  MacDiarmid,  C.,  (1998).  Concentration, \emph{Prob. Methods for Algorithmic Discrete Math.}, 195--248, Algorithms Combin. {\bf
16}, Springer, Berlin.


 \bibitem{M} MacDonald, D.,  A local limit theorem for large deviations of sums of independent, non-identically distributed
random variables, \emph{Annals of Prob.} {\bf 7} (1979) no.\ 3, 526--531.
  
\bibitem{M1}  MacDonald, D.,
On local limit theorems for  integer valued random variables,
\emph{Theor.   Prob. Appl.} {\bf 33} (1979), 352--355.

 \bibitem{Mit} Mitalauskas, A. A.,  Local limit theorems for stable limit distributions, \emph{Theor. Prob. Appl.} {\bf VII} (1962) (2), 180--185.


   \bibitem{Mu2}  A. B. Mukhin,
\emph{Some necessary and sufficient conditions for the validity of the local limit theorem,}
{Dokl. Akad. Nauk UzSSR} { 8} (1984), 7--8 (in Russian).



\bibitem{Mu1}  A. B. Mukhin,
\emph{Local limit theorems for distributions of sums of independent random vectors,}
{Theor. Prob. Appl.} {29} (1984), 369--375.


 \bibitem{Mu}  A. B. Mukhin,   \emph{Local limit theorems for lattice random variables,}
 {Theor. Prob.  Appl.} { 36} 4 (1991), 698--713.

\bibitem{O}  Olevskii, A. M.  (1975)
\emph{Fourier series with respect to general orthogonal systems},  Ergeb. Math. Grenzgeb. {\bf 86},
Springer-Verlag, Berlin, Heidelberg, New York.


\bibitem{P}   Petrov, V. V.,
\emph{Sums of Independent Random Variables}, Ergebnisse der Math. und ihre
 Grenzgebiete  {\bf 82}, Springer, 1975.


 

\bibitem{MW}    Macht W. and  Wolf W.,
\emph{The Local  Limit Theorem and H\"older-Continuity,}  {Probab. Theory Related Fields} {82} 2 (1989), 295--305.

\bibitem{PS} Philipp, W., Stout W.,
\emph{ Almost sure invariance principles for partial sums of weakly
dependent random variables}, Memoirs Amer. Math. Soc. {\bf 161}  1975.

  \bibitem{R}   R\"ollin A. and   Ross N.,
 \emph{ Local limit  theorems via Landau--Kolmogorov inequalities,} {Bernoulli}, {21} 2 (2015),  851--881.

  \bibitem{Rou}   Rousseau-Egele J.,  \emph{Un Theoreme de la Limite Locale Pour une Classe de Transformations Dilatantes et Monotones par Morceaux,}
{Ann. Probab.} {11} 3 (1983), 772--788.
 \bibitem{SW}   Szewczak, Z.   and  Weber M., 
\emph{Classical and Almost Sure  Local Limit Theorems}, 103 p.,   \emph{Dissert. Math.}, No {\bf 589}, 1--97, (2024).
 
\bibitem{WM2}  Weber, M.,
On Mukhin's necessary and sufficient condition for the validity of the local limit theorem,  \emph{Forum Math.} {\bf 36}, No 1, 1--4, (2024).

\bibitem{WM} Weber, M., A uniform semi-local limit theorem along sets of multiples for sums of i.i.d. square integrable random variables,  
 \emph{Funct.
Approx. Comment. Math.} {\bf 70} (2), 177--199
(2024).

\bibitem{WM1} Weber, M., A strenghtened asymptotic uniform
distribution, \emph{Probab. Math. Stat.} {\bf 43}, No. 2, 263--285
(2023).
  \bibitem{W1a}  Weber, M.,  An arithmetical approach to the convergence problem of series of dilated functions and its connection with the Riemann Zeta function, \emph{J. Number Th.}, { 162} (2016), 137--179. 
 
  \bibitem{W1b}  Weber, M.,  Critical probabilistic characteristics of the
Cram\'er model for primes and arithmetical properties, \emph{Indian J. of Pure and Applied Math},   (2024). https://doi.org/10.1.1007/s12188-024-00278-0. 


 \bibitem{W1}   Weber, M.,   {\rm  Some examples of application of the
metric entropy method}, \emph{Acta Math. Hungar.} {\bf 105}  (2004), 39-83

 \bibitem{W3}  Weber, M., 
 A sharp correlation inequality with an application to almost sure local limit theorem,
 \emph{Prob. and Math. Stat.}  {\bf 31} (2011), Fasc. 1,  79--98. 
 
  \bibitem{W4}  {Weber M.}, Some theorems related to almost sure
convergence of orthogonal series,  \emph{Indag. Math.} (N.S.) {\bf 11} (2000). 
 \bibitem{W}  {Weber M.},   \emph{Dynamical Systems and Processes}, European Mathematical Society Publishing House, IRMA Lectures
 in Mathematics 
and Theoretical Physics {\bf 14}, (2009),  xiii+761p. 
  
 \bibitem{W.}  {Weber M.},   \emph{   Entropie m\'etrique et convergence presque
partout}, Travaux en Cours   {\bf 58}, (1998), Hermann, Paris.

 \end{thebibliography}
\end{document}